\font\tenmsb=msbm10 \font\sevenmsb=msbm7 \font\fivemsb=msbm5
\let\amstexloaded@\relax\fi
\def\spaces@{\space\space\space\space\space}
\def\spaces@@{\spaces@\spaces@\spaces@\spaces@\spaces@}
\def\space@.{\futurelet\space@\relax}
\def\Err@#1{\errhelp\defaulthelp@\errmessage{AmS-TeX error: #1}}
\def\relaxnext@{\let\next\relax}
\def\accentfam@{7}
\def\noaccents@{\def\accentfam@{0}}
\def\Cal{\relaxnext@\ifmmode\let\next\Cal@\else
\def\next{\Err@{Use \string\Cal\space only in math mode}}\fi\next}
\def\Cal@#1{{\Cal@@{#1}}}
\def\Cal@@#1{\noaccents@\fam\tw@#1}
\def\Bbb{\relaxnext@\ifmmode\let\next\Bbb@\else
\def\next{\Err@{Use \string\Bbb\space only in math mode}}\fi\next}
\def\Bbb@#1{{\Bbb@@{#1}}}
\def\Bbb@@#1{\noaccents@\fam\msbfam#1}
\def\TT{{\Bbb T}}
\def\CC{{\Bbb C}}
\def\NN{{\Bbb N}}
\def\ZZ{{\Bbb Z}}
\def\SS{{\Bbb S}}
\def\RR{{\Bbb R}}
\def\t{{\theta}}
\def\l{{\lambda}}
\def\pa{{\partial}}
\def\e{{\epsilon}}
\def\beq{\begin{equation}}
\def\eeq{\end{equation}}
\def\qedbox{$\rlap{$\sqcap$}\sqcup$}
\def\skipaline{\removelastskip\vskip12pt plus 1pt minus 1pt}
\def\Proof{\removelastskip\skipaline
\noindent \it Proof. \rm}
\newtheorem{Theorem}{Theorem}
\newtheorem{Lemma}{Lemma}[section]
\newtheorem{Proposition}{Proposition}[section]
\newtheorem{Corollary}{Corollary}[section]
\newtheorem{Remark}{Remark}[section]
\newtheorem{Definition}{Definition}[section]
\begin{document}

\title{Examples of  Discontinuity of Lyapunov Exponent in   Smooth Quasi-Periodic Cocycles\footnote{This work is supported by NNSF of China (Grant 11031003) and a project funded by
the Priority Academic Program Development of Jiangsu Higher
Education Institutions.}
\author{
     Yiqian Wang \ \ \ and
  \ \ \ Jiangong You\footnote{The corresponding author.}\\
  {\footnotesize Department of Mathematics,
Nanjing University, Nanjing 210093, China} \\{\footnotesize Email:
yqwangnju@yahoo.com; jyou@nju.edu.cn}
  }
\date{}}

\maketitle

\begin{abstract}
We study the regularity of the Lyapunov exponent for quasi-periodic
cocycles $(T_\omega, A)$ where $T_\omega$ is an irrational rotation $x\to x+
2\pi\omega$ on $\SS^1$ and $A\in {\cal C}^l(\SS^1, SL(2,\mathbb{R}))$,
$0\le l\le \infty$. For any fixed $l=0, 1, 2, \cdots, \infty$ and
any fixed $\omega$ of bounded-type, we construct
 $D_{l}\in {\cal C}^l(\SS^1, SL(2,\mathbb{R}))$ such that the Lyapunov exponent is not continuous at $D_{l}$ in ${\cal C}^l$-topology.
 We also construct such examples in a smaller  Schr\"odinger class.\end{abstract}

\vskip 1.0cm

 \section{ Introduction and Results}
 Let $X$ be a ${\cal C}^r$ compact manifold. If $T: X\to X$ is an ergodic system with normalized invariant measure $\mu$ and
$A: X\to SL(2, \mathbb{R})$, we  call  $(T, A)$ a cocycle.
When $A$ is $L^{\infty}$ ($C^l$, analytic, respectively), we call $(T, A)$
a $L^{\infty}$ ( $C^l$, analytic, respectively) cocycle.

  For any $n\in \NN$ and $x\in X$, we denote
$$
A^n(x)=A(T^{n-1}x)\cdots A(Tx)A(x)
$$
and
$$
A^{-n}(x)=A^{-1}(T^{-n}x)\cdots A^{-1}(T^{-1}x).
$$
For fixed $(X, T, \mu)$, the (maximum) Lyapunov exponent of $(T, A)$ is defined as
$$
L(A)=\lim_{n\rightarrow \infty}\frac{1}{n}\int \log\|A^n(x)\|d\mu
\in [0, \infty).
$$

 We are interested in the continuity
of the Lyapunov exponent $L(A)$ in ${\cal C}^l(X, SL(2, \mathbb{R})) $. It is known that $L(A)$
is upper semi-continuous, thus it is continuous at generic $A$.
Especially, it is continuous at $A$ with $L(A)=0$ and at  uniformly
hyperbolic cocycles. The most interesting issue is the continuity of
$L(A)$ at the points of non-uniformly hyperbolic cocycles, which is
bound to depend on the class of cocycles under consideration
including its topology. Knill \cite{Knill} showed that $L:
L^\infty(X, SL(2,\mathbb{R}))\to [0, \infty)$ is not continuous if $(X,T)$
is aperiodic (i.e. the set of periodic points is of zero measure). Then Furman proved that if $(X, T)$ is uniquely ergodic, then $L: {\cal C}^0(X, SL(2,\mathbb{R}))\to [0,
\infty)$ is never continuous at points of non-uniformly
hyperbolicity.
Motivated by Ma$\tilde{n}\acute{e}$ \cite{Mane1,Mane2}, Bochi
\cite{[Bo],[Bo1]} further  proved that with $T: X\to X$ being a fixed ergodic system, any non-uniformly hyperbolic $SL(2,R)$-cocycle can be approximated by cocycles with zero Lyapunov exponent  in the $C^0$ topology.
These results
suggest that the discontinuity of $L$ is very common among cocycles
with low regularity.

  We also mention some other related results on the continuity of the Lyapunov exponent. Furstenberg - Kifer \cite{FK} and
Hennion \cite{Hennion} proved continuity of the largest Lyapunov
exponent of i.i.d random matrices under a condition of almost
irreducibility. More recently, C. Bocker-Neto and M. Viana \cite{BV}
proved that the Lyapunov exponents of locally constant $GL(2,
\CC)$-cocycles over Bernoulli shifts depend continuously on the
cocycle and on the invariant probability.

If the base system is a rotation on torus, i.e., $X=\TT^n$, $T= T_\omega: x\to
x+2\pi \omega$  with rational independent $\omega$, we call $(T_\omega, A)$ a
quasi-periodic cocycle. $X=\SS^1$ is the most special case. For simplicity, we
denote the cocycle $(T_\omega, A)$ by $(\omega, A)$.

If furthermore $A(x)=S_{v,E}(x)$ is of the form
$$
S_{v,E}(x)=\left(
\begin{array}{ll}
E-v(x) & -1\\
\ \ \ 1& 0\end{array} \right),
$$
we call $(\omega, S_{v,E}(x))$ a quasi-periodic Schr\"odinger
cocycle. This type of cocycles have attracted much attention largely due to
their rich background in physics.


Now we recall some positive results for quasi-periodic cocycles $(\omega, A)$. In \cite{GS} Goldstein and
Schlag developed a powerful tool, the Avalanche Principle, and
proved that if $\omega$ is a Diophantine irrational number and
$v(x)$ is analytic, then the Lyapunov exponent $L(E)$ is
H$\rm\ddot{o}$lder continuous provided $L(E)>0$. Similar results were proved in
\cite{BGS} by Bourgain,  Goldstein and Schlag when the
underlying dynamics is a shift or skew-shift of a higher
dimensional torus. Then Bourgain and Jitomirskaya \cite{[BJ]}
improved the result of \cite{GS} by showing that if $\omega$ is an
irrational number and the potential $v(x)$ is analytic, then the
Lyapunov exponent is jointly continuous on $E$ and $\omega$. This
result is crucial to solving the Ten Martini problem in \cite{AJ}. Similar
results were obtained by Bourgain for shifts of higher dimensional
tori in \cite{Bourgain}.
Later, Jitomirskaya, Koslover and Schulteis \cite{JKS} proved that
the Lyapunov exponent is continuous on a class of analytic one-frequency quasiperiodic $M(2, \CC)$-cocycles with singularities. With this result, they proved continuity of Lyapunov exponent associated with general quasi-periodic Jacobi matrices or orthogonal polynomials on the unit circle in various parameters. Recently,
Jitomirskaya and Marx \cite{JM1} proved the continuity of Lyapunov exponent for all non-trivial singular analytic quasiperiodic cocycles with one-frequency, thus removing the constraints in \cite{JKS}. Moreover, applications are extended to analytic Jacobi operators with more parameters, which is crucial to determining the Lyapunov exponent of extended Harper's model by Jitomirskaya and Marx \cite{JM2}.
For further results, one is referred to
\cite{[Bjerk],BDJ,BF,BochiV,Chan,Fabbri,FJ,Furman,HY,Th}.

In conclusion,  the Lyapunov exponent of quasi-periodic cocycles is
discontinuous in ${\cal C}^0$ topology, and continuous in  ${\cal
C}^\omega$ topology.

In \cite{JKS} the authors proposed to consider the situation between
${\cal C}^0$ and ${\cal C}^{\omega}$.  Klein \cite{Klein} studied  continuity of Lyapunov exponent on $E$ in the Gevrey case. More precisely, he
proved that the Lyapunov exponent of quasi-periodic Schr\"odinger
cocycles in the Gevrey class is continuous at the potentials $v(x)$
satisfying some transversality condition. Recently, Avila and Krikorian
\cite{Avila} restricted their attention to a  class of quasi-periodic
$SL(2,\mathbb{R})$ cocycles, called $\e$-monotonic cocycles (cocycles
satisfying a twist condition). They proved that the Lyapunov exponent
is continuous, even smooth  in  smooth category of   $\e$-monotonic
quasi-periodic $SL(2,\mathbb{R})$ cocycles. \vskip 0.2cm

An interesting question is if the Lyapunov exponent of $(\omega, A)$ is always
continuous in\\ ${\cal C}^l(\SS^1,SL(2,\mathbb{R}))$,
 $ l=1, 2, \cdots, \infty$,
as in ${\cal C}^\omega(\SS^1, SL(2,\mathbb{R}))$ or in $\e$-monotonic
quasiperiodic

\noindent $SL(2,\mathbb{R})$-cocycles. \vskip 0.2cm

 In this paper, we
construct a cocycle
 $D_{l}\in {\cal C}^l(\SS^1, SL(2,\mathbb{R}))$ such that the Lyapunov exponent is not continuous at $D_{l}$ in ${\cal C}^l$-topology for any $l=1, 2, \cdots, \infty$.
\\
\vskip 0.2cm

\begin{Theorem}\label{maintheorem}
\vskip -0.4cm Suppose that  $\omega$ is a fixed irrational number of
bounded-type. For any $0\le l\le \infty$, there exist cocycles
$D_{l}\in {\cal C}^l(\SS^1, SL(2, \mathbb{R}))$ such that the Lyapunov
exponent is discontinuous at $D_{l}$ in  ${\cal C}^l(\SS^1, SL(2,
\mathbb{R})).$
\end{Theorem}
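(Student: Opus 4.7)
Since $L$ is upper semi-continuous on $C^l(\SS^1,SL(2,\RR))$, any discontinuity at $D_l$ must be a downward jump: I need to produce $D_l$ with $L(D_l)>0$ together with a sequence $D_l^{(n)}\to D_l$ in $C^l$ with $\limsup_n L(D_l^{(n)})<L(D_l)$, and in fact I would aim for $L(D_l^{(n)})=0$. The case $l=0$ is immediate from Bochi's theorem applied to any non-uniformly hyperbolic example, so the bulk of the work is for $1\le l\le\infty$. Throughout, let $p_n/q_n$ be the convergents of $\omega$; bounded type gives $q_{n+1}\le C q_n$, and this geometric control of scales is what drives the whole construction.

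\textbf{Inductive construction of $D_l$.} I would build $D_l$ as the $C^l$-limit of a sequence $A_n$, starting from $A_0$ equal to a small rotation cocycle (for which $L=0$). Given $A_{n-1}$, I form $A_n$ by inserting a ``hyperbolic bump'' of amplitude $\lambda_n\gg 1$ supported on an arc $I_n\subset\SS^1$ of length roughly $1/q_n$, whose unstable direction is chosen so that along the orbit $x,T_\omega x,\dots,T_\omega^{q_n-1}x$ the expansions contributed by the bumps at scales $q_1,\dots,q_n$ line up rather than cancel. A bump of height $h_n$ on an arc of width $1/q_n$ has $C^l$-norm of order $h_n q_n^l$, so choosing $h_n$ with $\sum h_n q_n^l<\infty$ guarantees that $\{A_n\}$ is Cauchy in $C^l$. (For $l=\infty$ one sets up a diagonal argument across the $C^l$-norms.) The limit $D_l$ lies in $C^l(\SS^1,SL(2,\RR))$.

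\textbf{Positivity of $L(D_l)$ and the nearby sequence.} To show $L(D_l)\ge c_0>0$ I would use an Avalanche-Principle style telescoping: since unstable and stable directions at different scales are matched by the design of $I_n$ and $\lambda_n$, a definite fraction of the expansion inserted at scale $q_n$ survives in $A_n^{q_n}(x)$, uniformly in $n$. This propagates to a lower bound on $L(D_l)$. To produce the nearby cocycles, for each $N$ I define $\tilde D^{(N)}$ by keeping every bump of $D_l$ at scales $\ne q_N$ but removing (or replacing by an elliptic matrix) the one at scale $q_N$. Then $\|\tilde D^{(N)}-D_l\|_{C^l}\lesssim h_N q_N^l\to 0$. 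Because positivity of $L(D_l)$ rests on the coherent propagation of unstable directions through every scale, breaking one link should collapse the entire cascade: $\tilde D^{(N)}$ becomes conjugate to an almost-rotation cocycle, and one can verify $L(\tilde D^{(N)})=0$. Combined with upper semi-continuity this gives the desired discontinuity.

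\textbf{Main obstacle.} The delicate point is to arrange the hyperbolic insertions so that they simultaneously satisfy two conflicting demands: (i) the unstable directions at scale $q_n$ must align with those propagated from scales $q_1,\dots,q_{n-1}$ so that $L(D_l)>0$, yet (ii) the alignment must be fragile enough that excising a single scale produces $L=0$. This requires careful tracking of the invariant cones and angles across the induction. The bounded-type hypothesis on $\omega$ enters crucially here, controlling the recurrence pattern of orbits into $I_n$ (essentially $T_\omega^k I_n\cap I_n=\emptyset$ for $0<k<q_n$, with controlled equidistribution up to time $q_n$) so that the scales decouple cleanly and the inductive bookkeeping closes. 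By comparison, matching the smoothness class $l$ to $q_n^l$ in the bump estimate is routine.
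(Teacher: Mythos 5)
There is a genuine gap, and it sits at the foundation of your construction rather than in the bookkeeping. You propose to start from a rotation cocycle ($L=0$) and insert hyperbolic bumps of amplitude $\lambda_n\gg1$ on arcs of width $\sim 1/q_n$, while keeping the sequence Cauchy in $C^l$. These two requirements are incompatible: a bump of height $h_n$ on an arc of width $q_n^{-1}$ has $C^l$-norm of order $h_nq_n^{l}$ (as you note), so $C^l$-convergence forces $h_n\to0$ rapidly, and already $C^0$-convergence forces the matrix norms of the inserted blocks to stay bounded and the increments to vanish. The limit $D_l$ is then a summably small perturbation of a rotation cocycle, and your "Avalanche-Principle telescoping" has nothing to telescope: the Avalanche Principle requires each constituent block to be strongly hyperbolic with norm bounds uniform over the scales, which is exactly what the vanishing bump heights destroy. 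No lower bound $L(D_l)\ge c_0>0$ can be extracted this way. The paper's construction is the opposite: the hyperbolic part $\Lambda=\mathrm{diag}(\lambda,\lambda^{-1})$ with $\lambda\gg1$ is present globally and fixed from the start, $A(x)=\Lambda R_{\phi(x)}$, and the only thing that is modified along the induction is the angle function $\phi$, by exponentially small amounts; positivity of $L(D_l)$ comes from Young's scheme of concatenating $\mu$-hyperbolic blocks, controlled by a lower bound on the angle between the forward-contracted and backward-contracted directions $s_n,s_n'$ away from the critical points.

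The second gap is the claim $L(\tilde D^{(N)})=0$, for which you give no mechanism ("breaking one link should collapse the entire cascade" is not an argument), and which is in any case far stronger than what is needed or what is true in the paper's construction: there the perturbed cocycles lose only a definite fraction $\delta$ of $\log\lambda$, not all of it. The missing key idea is the role of a \emph{degenerate} critical point matched to the smoothness class: $\phi$ is chosen so that near $c_i$ the splitting satisfies $s_n(x)-s_n'(x)=\mathrm{sgn}(x-c_i)|x-c_i|^{l+1}$ (resp.\ $e^{-|x-c_i|^{-a}}$ for $l=\infty$). Because this function is flat to order $l+1$ at $c_i$, one can add a perturbation of $C^l$-size only $O(q_n^{-2})$ that makes $s_n\equiv s_n'$ on a whole neighborhood $I_n/10$ of the critical point, i.e.\ creates an exact tangency between the contracted direction of the forward block and the expanded direction of the backward block. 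Each return of an orbit to $I_n/10$ then produces a norm cancellation (the paper's Lemma 4.1), and the bounded-type hypothesis is used to show these returns occur with a definite positive frequency relative to returns to $I_n$, yielding $L(\tilde A_n)\le(1-\delta)\log\lambda$ with $\delta$ independent of $\lambda$ and $n$, while $L(D_l)\ge(1-4\epsilon)\log\lambda$ with $\epsilon\ll\delta$. Without the degenerate critical point you cannot create the tangency by a $C^l$-small perturbation, and without the tangency-and-return-time analysis you cannot quantify the drop in the exponent.
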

\begin{Remark} Let $\Lambda=\left(\begin{array}{ll} \l & 0\\ 0
&\l^{-1}\end{array}\right)$ and $R_{\theta}=\left(\begin{array}{ll} \cos \theta\! &\!\! -\sin\! \theta\\
\sin \theta & \cos \theta\end{array}\right)$. The cocycles we constructed are of the form $\Lambda \cdot R_{\frac{\pi}{2}-\phi(x)}$,
where $\phi(x)$ is
either a $2\pi$-periodic function  corresponding to a cocycle
homotopic to the identity (see Figure 1), or a sum of the identity and a $2\pi$-periodic  function corresponding to a cocycle non-homotopic
to the identity (see Figure 2).

\end{Remark}

\begin{Remark} Theorem \ref{maintheorem}   shows that the continuity of Lyapunov exponent in ${\cal C}^l$-topology ($l=1, 2, \cdots, \infty$) and $C^\omega$ is different.
 Combining with Avila and Krikorian's result \cite{Avila}, it also shows the continuity of Lyapunov exponent in ${\cal C}^l$-topology ($l=1, 2, \cdots, \infty$) and $C^0$ is different.
It is plausible that the Lyapunov exponent is continuous at an open and dense set in ${\cal C}^l$-topology ($l=1, 2, \cdots, \infty$). Surprisingly, there are no examples of continuity
of Lyapunov exponent at non-uniformly hyperbolic   cocycles which are homotopic to the identity.
\end{Remark}

\begin{Remark}
We say $\omega$ is an irrational number of
bounded type if there exists $M\ge\frac{\sqrt 5 +1}2$, such that for its fractional expansion $\frac{p_n}{q_n},\ n=1, 2, \cdots,$
 it holds that $q_{n+1}<M q_n,\ \forall n$. Technically we need to assume that $\omega$ is of bounded type. This is not typical as the set of such numbers is of measure zero.
  We believe that counterexamples can be constructed for $\omega$ in a full measure, even for all real numbers.
\end{Remark}

\begin{Remark}
Recently, Jitomirskaya and Marx \cite{JM3} obtained similar results in complex category $M(2, \CC)$ by the tools of harmonic analysis.
\end{Remark}

From the $SL(2, \mathbb{R})$ examples  homotopic to the identity constructed in Theorem \ref{maintheorem}, it is easy to construct
examples in the Schr\"odinger class by conjugation.\footnote{The authors are grateful to A. Avila, Z. Zhang and the referee for pointing out this.
The proof given below  was proposed by A. Avila and the referee. One can also use Z.Zhang's trick in \cite{ZZhang} to give  another proof.}\\

\begin{Theorem} \label{R5}
 Suppose that  $\omega$ is a fixed irrational number of
bounded-type. For any $0\le l\le \infty$, there exists a
$C^l$ periodic function $v(x)=v(x+2\pi)$ such that the Lyapunov
exponent is discontinuous at $S_{v,0}$ in the Schr\"odinger class, i.e.,
there exist $C^l$ periodic functions $v_n(x)=v_n(x+2\pi)$ such that $v_n(x)\to v(x)$ is $C^l$ topology but $L(S_{v_n,0})\nrightarrow L(S_{v,0})$.
\end{Theorem}

\vskip 0.3cm

\noindent {\it Outline of the proof of Theorem \ref{maintheorem}.}
  $D_l$ will be constructed as the
limit of a sequence of cocycles $\{A_{n}(x), n=N, N+1, \cdots \}$ in
${\cal C}^{l}(\mathbb{S}^1, SL(2,\mathbb{R}))$. $\{A_{n}(x), n=N, N+1, \cdots \}$
possess some kind of finite hyperbolic property, i.e.,
$\|A^{r_n^+}_{n}(x)\|\sim \lambda^{r_n^+}$ for most $x\in \SS^1$ and
$\l\gg 1$ with $r_n^+\rightarrow \infty$ as $n\rightarrow \infty$,
which gives  a lower bound estimate $(1-\e)\log \lambda$ of the
Lyapunov exponent of the limit cocycle $D_l(x)$ if $\l\gg 1$. Then
by modifying $\{{A}_{n}(x)\}_{n=N}^{\infty}$, we construct another
sequence of cocycles $\{\tilde{A}_{n}(x)\}_{n=N}^{\infty}$ such that
$\tilde{A}_{n}(x)\rightarrow D_l(x)$ in ${\cal C}^l$-topology as
$n\rightarrow \infty$. Moreover, for each $n$, the Lyapunov exponent
of $\tilde{A}_{n}(x)$ is less than $(1-\delta)\log \lambda$ with
$1>\delta\gg \e>0 $ independent of $\lambda$, which implies the
discontinuity of the Lyapunov exponent at $D_l(x)$. \vskip 0.2cm

 A key technique in the construction of $A_{n}(x)$  comes from Young \cite{[Young]}, which was derived from
  Benedicks-Carleson \cite{BC}. However,
 there is a difference between our method
and the one in \cite{[Young]}. To construct $A_{n}(x)$ and
$\tilde{A}_{n}(x)$, we have to start from some cocycle possessing
``degenerate" critical points, while the critical points of cocycles
in \cite{[Young]} are non-degenerate. \vskip 0.2cm

\noindent
{\it The proof of Theorem \ref{R5}.} For any  $0\le l\le \infty,$  assume that $D_{l+\tau}(x)=\Lambda\cdot R_{\frac{\pi}{2}-\phi(x)}$ are
cocycles homotopic to the identity constructed in Theorem
\ref{maintheorem}, and $\tau=\tau(\omega)$ is a fixed integer which
will be defined later. In the example, $\phi(x)$ can be assumed to
satisfy $\max_x |\phi(x)|<\frac{\pi}{10}$. Let $\alpha
=(0,1)^T$. Then $D_{l+\tau}(x) \cdot \alpha$ and $\alpha$ are
linearly independent for every $x$, thus
the matrix $B_1(x)=(-D_{l+\tau}(x-\omega) \cdot \alpha,\alpha)\in
C^{l+\tau}(\SS^1,GL(2,\mathbb{R}))$ is non-singular. A direct computation shows that
there exist $a(x),c(x)\in C^{l+\tau}(\SS^1,\mathbb{R})$ such that
$$
B_1(x+\omega)^{-1}D_{l+\tau}(x)B_1(x)=S(x)=\begin{pmatrix} a(x) & -1
\\ c(x) & 0
\end{pmatrix}
$$
Here $c(x)>0$ since the determinant of $B_1$ does not change sign,
and we write $c(x)=e^{f(x)}.$ Let $B_2(x)= \begin{pmatrix} e^{d(x)} & 0 \\
0 & e^{d(x+\omega)}\end{pmatrix}$, where
\begin{equation}\label{homo}d(x+2\omega)- d(x)=f(x)-[f(x)].\end{equation}
Then $B_2(x+\omega) ^{-1}S(x)B_2(x)$ has the form $
\begin{pmatrix} -v(x) & -1 \\ e^{[f(x)]} & 0
\end{pmatrix}
$ where $v(x)$ is uniquely determined by $D_{l+\tau}$. Since
$2\omega$ is Diophantine, $(\ref{homo})$ has a solution
$d(x)\in C^{l}(\SS^1,\mathbb{R})$ if $\tau$ is large enough. It follows that $v(x)\in
C^{l}(\SS^1,\mathbb{R}).$

 Let $B(x)=B_1(x)B_2(x),$ then
$det B(x)=e^{[f]} detB(x+\omega)$ by $D_{l+\tau}(x)B(x)=B(x+\omega) S(x)$. It follows that  $e^{[f]}=1$ since $x\mapsto x+n\omega$ is
ergodic in $\SS^1$, and
consequently $detB(x)=e$ is constant. Let
$\tilde{B}(x)=\frac{1}{\sqrt{e}}B(x)\in
C^l(\SS^1,SL(2,\mathbb{R})),$ we have $$\tilde{B}(x+\omega)
^{-1}D_{l+\tau}(x)\tilde{B}(x)=
\begin{pmatrix} -v(x) & -1 \\ 1 & 0
\end{pmatrix} = S_{v,0}.
$$   $L(D_l)=L(S_{v,0})$ since Lyapunov exponent
is conjugation invariant.

By Theorem \ref{maintheorem},
there is a sequence of $\tilde A_n$ such that $\tilde A_n\to
D_{l+\tau}$ in $C^{l+\tau}$ topology and $|L(\tilde
A_n)-L(D_{l+\tau})|>\delta$ for a positive $\delta$ when $n$ is
large.
By the similar argument as above, there exist
$\tilde{B}_n(x)\in C^l(\mathbb{S}^1,SL(2,\mathbb{R})),$ $v_n(x)\in
C^{l}(\mathbb{S}^1,\mathbb{R}),$ such that $\tilde{B}_n$ conjugates $\tilde
A_n$ to a Schr\"odinger cocycle $S_{v_n, 0}$ and thus $L(\tilde A_n)=L(S_{v_n,0})$. Since $\|\tilde A_n-
D_{l+\tau}\|_{C^{l+\tau}}\to 0,$ we have $\|\tilde{B}_n-
\tilde{B}\|_{C^{l}}\to 0$ and then $\|v_n- v\|_{C^{l}}\to 0.$ On the other side, $|L(S_{v,0})- L(S_{v_n,0})|>\delta>0$ when $n$ is large enough. The proof of Theorem \ref{R5} is thus finished.\\

Throughout the paper  $\omega$ is a fixed irrational number of bounded type (described by the parameter $M$), $l$ is a fixed positive integer,  $\delta=\frac{1}{4}M^{-20}>0$,
 $\e=M^{-100}>0$. $N$, $\mu$ and $\l$ with $\lambda\ge \mu\ge\l^{1-\e}\gg N\gg 1$ and $\mu^{\e}>2$ denote
three large numbers determined later.

\section {Some properties of the concatenation of hyperbolic matrices}
In this section, we will study the norm of  the product of
hyperbolic matrices by analyzing the curves of the most
contracted directions of them. The analysis in this section is developed from \cite{[Young]}. In the following, all matrices
belong to $SL(2, \mathbb{R})$.

   A matrix $A\in SL(2, \mathbb{R})$ with $\|A\|>1$ is called hyperbolic. We denote the unit vectors on the most contracted and expanded direction of $A$ by $s(A)$ and $u(A)$ respectively. That is,
   $$
   |A\cdot s(A)|=\min_{|v|=1} |A\cdot v|=\|A\|^{-1},\qquad |A\cdot s'(A)|=\max_{|v|=1} |A\cdot v|=\|A\|.
   $$
It is known that $s\perp u$ and $As\perp Au$. Moreover, for two
matrices $A$ and $B$ with $\|A\|, \|B\|>1$, it is easy to see that
$\|BA\|=\|B\|\cdot \|A\|$ if and only if ${A(s(A))}$ is parallel to
$s(B)$. The most contracted direction plays a key role in the growth
of the norm of product of hyperbolic matrix sequences.



For a sequence of  matrices $\{\cdots, A_{-1}, A_0,A_1,
\cdots \}$, we denote
$$
A^n=A_{n-1}\cdots A_1A_0
$$
and
$$
A^{-n}=A_{-n}^{-1}\cdots A_{-1}^{-1}.
$$

\vskip 0.3cm
\begin{Definition}\label{muhyper}
For any $1\ll\mu\le \l$, we say that the block of matrices $\{A_0,A_1, .
. . ,A_{n-1}\}$ is $\mu$-hyperbolic if
$$
\begin{array}{ll}
&{\rm (i)}\quad \|A_i\|\le \l\quad \forall i,\\
&{\rm (ii)}\quad \|A^i\|\ge\mu^{i(1-\e)}\quad \forall i\\
\end{array}
$$
and  {\rm (i)-(ii)} hold if ${A_0, . . . ,A_{n-1}}$ is replaced by
$\{A^{-1} _{n-1}, . . . ,A^{-1}_ 0\}.$
\end{Definition}


\vskip 0.3cm
 The next proposition is due to Young
\cite{[Young]}, which tells us when the concatenation of two
hyperbolic blocks is still a hyperbolic block.


\begin{Lemma}\label{Younglemma5}
Suppose $C$ satisfies $\|C\|\ge \mu^{m}$ with $\mu\gg 1$. Assume
$\{A_0, A_1, \cdots, A_{n-1}\}$ is a $\mu-$hyperbolic sequence and
$\angle (s({ C}^{-1}), s(A^n))=2\theta\ll 1$. Then $\|A^n\cdot
C\|\ge \mu^{(m+n)(1-\e)}\cdot \t$.
\end{Lemma}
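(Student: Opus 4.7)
The plan is to bound $\|A^n C\|$ from below by tracking the image of a carefully chosen test vector, using the singular-value geometry of $SL(2,\RR)$. The natural choice is $v = u(C)$, the most expanded direction of $C$: then $\|Cv\| = \|C\|$, and crucially the direction of $Cv$ coincides with $s(C^{-1})$, since the direction $C$ maximally expands is precisely the direction $C^{-1}$ maximally contracts (unit vector interpretation). This identification is what connects the hypothesis on $\angle(s(C^{-1}), s(A^n))$ to the forward dynamics.

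With this choice, write $Cv = \|C\|\, w$, where $w$ is the unit vector along $s(C^{-1})$. The hypothesis $\angle(s(C^{-1}), s(A^n)) = 2\theta$ says $w$ makes angle $2\theta$ with $s(A^n)$. Decomposing $w$ in the orthonormal basis $\{\hat u, \hat s\}$ given by unit vectors along $u(A^n)$ and $s(A^n)$ yields (up to signs that do not affect norms) $w = (\sin 2\theta)\,\hat u + (\cos 2\theta)\,\hat s$. Since $A^n\hat u \perp A^n\hat s$ with norms $\|A^n\|$ and $\|A^n\|^{-1}$, one gets
$$\|A^n w\|^2 \;\ge\; \sin^2(2\theta)\cdot \|A^n\|^2,$$
hence $\|A^n w\| \ge \theta\, \|A^n\|$, using $\sin 2\theta \ge \theta$ valid in the regime $\theta \ll 1$ supplied by the hypothesis.

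Combining these estimates with the $\mu$-hyperbolicity bound $\|A^n\| \ge \mu^{n(1-\e)}$ and $\|C\| \ge \mu^m$ gives
$$\|A^n C\| \;\ge\; \|A^n C v\| \;=\; \|C\|\cdot \|A^n w\| \;\ge\; \theta\cdot \mu^m\cdot \mu^{n(1-\e)} \;\ge\; \theta\cdot \mu^{(m+n)(1-\e)},$$
where the last inequality uses $m + n(1-\e) = (m+n)(1-\e) + \e m \ge (m+n)(1-\e)$. The only conceptual point is the identification $C\,u(C) \parallel s(C^{-1})$, which turns the angular hypothesis into a controlled misalignment between $Cv$ and the contracting direction of $A^n$, ensuring that at least a $\theta$-fraction of $Cv$ survives the action of $A^n$; everything else is orthogonal decomposition of $2\times 2$ singular-value frames. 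I do not anticipate a real obstacle here: the argument is short and geometric, and the only mild care needed is the small-angle bound $\sin 2\theta \ge \theta$ and the bookkeeping that converts $\mu^{m+n(1-\e)}$ into $\mu^{(m+n)(1-\e)}$.
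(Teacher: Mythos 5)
Your proof is correct. The paper itself gives no proof of this lemma (it simply attributes it to Young), and your argument is exactly the standard one underlying Young's original statement: the identification $C\,u(C)\parallel s(C^{-1})$, the orthogonal decomposition of the resulting unit vector in the singular frame $\{u(A^n),s(A^n)\}$, and the bookkeeping $\mu^{m+n(1-\e)}\ge\mu^{(m+n)(1-\e)}$ are all sound.
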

\vskip 0.2cm Denote $\Lambda=\left(\begin{array}{ll} \l & 0\\ 0
&\l^{-1}\end{array}\right)$ and $R_{\t}$  the rotation by the angle
$\t$, i.e., $R_{\theta}=\left(\begin{array}{ll} \cos \t & -\sin \t\\
\sin \t & \cos \t\end{array}\right)$. Let $\phi(x)$ be the lift of a
${\cal C}^{l}$ function defined on $\mathbb{S}^1$. Throughout this paper,
the matrix $A$ is of the special form $\Lambda\cdot R_{\frac{\pi}{2}-\phi(x)}$.

Let ${ \rm RP}^1$ be the real projective line and denote the natural projection $\RR^2\rightarrow { \rm RP}^1$ by
 $v \rightarrow \bar{v}$. For any matrix $A\in SL(2, \RR)$, define the map $\bar{A}: { \rm RP}^1\rightarrow { \rm RP}^1$ by
$\bar{A}\cdot \bar{v}=\overline{A \cdot v}$.
Then we define the projective actions corresponding to $A(x)$ by
$$
\Phi_A: \mathbb{S}^1\times
{\rm RP}^1\rightarrow \mathbb{S}^1\times {\rm RP}^1,\quad \Phi_A(x, \t)=(Tx, \bar{A}(x)\t).
$$
Then for
$A(x)=\Lambda\cdot
R_{\frac{\pi}{2}-\phi(x)}$, we have $$
\Phi_A=\Phi_{\Lambda}\circ\Phi_{R_{\frac{\pi}{2}-\phi(x)}}:\mathbb{S}^1\times
{\rm RP}^1\rightarrow \mathbb{S}^1\times {\rm RP}^1,
$$
where $\Phi_{\Lambda}(x, \t)=(x, \bar{\Lambda}\t)$ and
$\Phi_{R_{\frac{\pi}{2}-\phi(x)}}(x, \t)=(Tx, \frac{\pi}{2}-\phi(x)+\t)$.

Suppose that $A^{n}(x)$ is hyperbolic for any $x\in I \subset
\mathbb{S}^1$. Let $s, u: I\rightarrow {\rm RP}^1$ be the function
$$
s(x)=\overline{s(A^{n}(x))},\quad u(x)=\overline{u(A^{n}(x))}.
$$
We also define $s', u': T^n(I)\rightarrow {\rm RP}^1$ by
$$
s'(x)=\overline{s(A^{-n}(x))},\quad u'(x)=\overline{u(A^{-n}(x))}.
$$

It is not difficult to see that
\begin{equation}\label{graph}
(T^nx, s'(T^nx))=\Phi_A^n(x, u(x)), \quad (T^nx, u'(T^nx))=\Phi_A^n(x, s(x)), \quad x\in I.
\end{equation}

Since $\phi(x)$ is ${\cal C}^{l}$, we have that the map $h:(x,\theta)\rightarrow \frac{\pa}{\pa
\t}|A^n(x)\hat{\t}|$ is ${\cal C}^l$. Obviously, from the definition of $s(x)$ and $u(x)$, we have $h(x, s(x))=h(x,u(x))=0$. Moreover, since $A^n(x)$ is hyperbolic, we can easily see that
if $h(x, \t)=0$, then
$\frac{\pa h}{\pa \t}(x, \t)\not=0$, where $\hat{\t}$
denotes the unit vector corresponding to $\t\in {\rm RP}^1$. Thus by
Implicit Function Theorem $s,\ u$ are determined by $h(x,\t)=0$ with
$l$-order derivatives. Similarly, we can prove that $s', u'$ are of
$l$-order differentiability.

The following lemma gives the estimates on the derivatives of curves
of the most contracted direction of hyperbolic matrices.

\begin{Lemma}\label{Younglemma3}
Let $I$ be an interval in $\mathbb{S}^1$. Assume that $(A(x), \cdots,
A(T^{n-1}x))$ is $\mu$-hyperbolic for each $x\in I$ with $n, \mu\ge \l^{1-\e}\gg
1$. Then it holds that
$$\begin{array}{ll}
&{\rm(1)} \left| s-\phi(x)\right |_{\mathbb{C}^1}< 2 \mu^{-(1-\e)},\quad \forall x\in I;\\
\\
&{\rm(2)}\left
|s'\right|_{\mathbb{C}^1}<2 \mu^{-(1-\e)}\quad
\forall x\in T^nI.
\end{array}
$$

\end{Lemma}
The proof can be found in  \cite{[Young]} given by Young.


\vskip 0.4 cm




\section {The construction of $A_n(x)$}

We first construct the counter-examples in finite smooth case. Throughout this paper, $l\in \NN$ is arbitrary but fixed,
and $N\gg 1$ with $q_N^{-2}<\delta$  and \beq \label{N} 10l\sum_{n=N}^{\infty}
\frac{\log q_{n+1}}{q_n}\le \e. \eeq

 For $c_1\in [0, \pi), c_2=c_1+\pi$ and $n\ge N$, we define ${\cal C}_0=\left\{c_1, c_2\right\}$, $I_{n,1}=[c_1-\frac{1}{q_{n}^2},\ c_1+\frac{1}{q_{n}^2}]$,
$I_{n,2}=[c_2-\frac{1}{q_{n}^2},\ c_2+\frac{1}{q_{n}^2}]$ and
$I_{n}=I_{n, 1}\bigcup I_{n, 2}$. For $x\in I_n$, we denote the
smallest positive integer $j$ with $T^jx \in I_{n}$ (respectively  $T^{-j}x \in
I_{n}$) by $r^+_{n} (x)$ (respectively $r^-_{n} (x)$), and define
$r^\pm_{n}=\min_{x_\in I_{n}} r^\pm_{n} (x).$ Obviously,
${r^\pm_{n}}\ge q_n$. Moreover, for $C\ge 1$, we denote by
$\frac{I_{n,i}}{C}$ the set $[c_{i}-\frac{1}{Cq_n^2},
c_{i}+\frac{1}{Cq_n^2}], i=1, 2$ and by $\frac{I_n}{C}$ the set
$\frac{I_{n,1}}{C}\cup\frac{I_{n,2}}{C}$.

 For any $n> N$, we  inductively define $\{\l_{n}\}$ by $\log\l_{n}=\log\l_{n-1}-\frac{10l\log q_{n}}{q_{n-1}}$ where $\l_N=\l$. It is easy to see that
 $\l_n$ decrease to some $\l_{\infty}$ with $\l_{\infty}>\l^{1-\e}$ if $\l\gg N\gg 1$.

 In this section, we will inductively  construct a convergent sequence of cocycles $\{A_{n}(x), n=N, N+1, \cdots \}$ in $  {\cal C}^{l}(\mathbb{S}^1,
SL(2,\mathbb{R}))$ with some desirable properties. More precisely, we will prove

\begin{Proposition}\label{P2} There exist $A_n=\Lambda R_{\frac{\pi}{2}-\phi_n(x)}$ with
$\phi_n(x)$ the lift of a ${\cal C}^{l}$ function on $\mathbb{S}^1$ {\rm
(}$n=N, N+1, \cdots${\rm )} such that the following properties hold:

\begin{equation}\label{phin}\hskip -6.8cm {\it 1.}\quad |\phi_n(x)-\phi_{n-1}(x)|_{C^{l}}\le \lambda_n^{-q_{n-1}^{\frac1 {10}}},\quad {\rm if}\ \  n>N.\end{equation}

2. For each $x\in I_n$,  $A_{n}(x), A_n(Tx),\cdots, A_n(T^{r_{n}^+(x)-1}x)$ is
$\lambda_n$-hyperbolic.

3. Let $s_n(x)=\overline{s(A_n^{{r^+_{n}}}(x))}$,
$s'_n(x)=\overline{s(A_n^{-{r^-_{n}}}(x))}$. Then we have
$$
\begin{array}{ll}
&{\rm (1)_n}\quad s_{n}(x)-s'_{n}(x)=\phi_0(x)\quad {\rm\ on}\ \frac{I_n}{10};\\
 \\
  &{\rm (2)_n}\quad |s_{n}(x)-s'_{n}(x)|\ge \frac12|\phi_0(x)|\ge \frac{1}{(20q_{n}^2)^{l+1}},\quad
x\in I_n\backslash \frac{I_n}{10},
 \end{array}
$$
where $\phi_0(x)$ is defined in (\ref{phi0}) and (\ref{phi01}).
\end{Proposition}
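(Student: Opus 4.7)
The plan is to proceed by induction on $n \geq N$, writing $\phi_n = \phi_{n-1} + \psi_n$ with $\psi_n$ supported in $I_n$. For the base case $n = N$, set $\phi_N = \phi_0$, the (``defined below'') $2\pi$-periodic $C^l$ function vanishing to order $l$ at the critical points $c_1, c_2$ and satisfying $|\phi_0(x)| \geq (20^l q_N^{2l})^{-1}$ on $I_N \setminus \frac{I_N}{10}$. This degeneracy at the critical points is exactly what matches the $C^l$ regularity, and is the feature distinguishing the present construction from Young's non-degenerate setup. For $\lambda \gg N$, the cocycle $A_N = \Lambda R_{\phi_0}$ is $\lambda_N$-hyperbolic on the orbit of any $x \in I_N$ up to its first return, by direct computation since the rotation angle is bounded away from the critical direction for the bulk of the orbit. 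The identities $(1)_N, (2)_N$ follow from the explicit form of $A_N$: to leading order $s_N$ and $s_N'$ differ from $\pm\phi_0(x)/2$ by exponentially small corrections, so the defining relations hold at scale $N$.

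For the inductive step, assume $A_{n-1}$ satisfies all three properties at level $n-1$. Property 2 at level $n$ is obtained by applying Lemma 2.1 iteratively: since $\phi_n \equiv \phi_{n-1}$ outside $I_n$, the orbit of $x \in I_n$ under $A_n$ revisits $I_{n-1}$ at roughly $q_n/q_{n-1}$ successive times before returning to $I_n$, and by the definition of $r_n^+$ each revisit point lies in $I_{n-1} \setminus I_n$. By the inductive $(2)_{n-1}$, the angle $|s_{n-1} - s_{n-1}'|$ at each such point is bounded below by $(20^l q_{n-1}^{2l})^{-1}$, so each concatenation via Lemma 2.1 loses at most $2l \log q_{n-1}$ in the exponent. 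Summing over at most $q_n/q_{n-1}$ concatenations gives a total loss of at most $10l \log q_n / q_{n-1}$ of uniform exponential rate, which precisely matches the gap $\log \lambda_{n-1} - \log \lambda_n$; hence $\lambda_n$-hyperbolicity survives up to length $r_n^+$.

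To enforce $(1)_n$ and $(2)_n$, construct $\psi_n$ via a contraction argument using Lemma 2.2. That lemma yields $s_n = \phi_n + O(\mu^{-(1-4^{l-1}\epsilon)})$ in $C^l$ norm, and the analogous estimate (with appropriate opposite sign, arising from the symmetric homotopic-to-identity structure noted in the first remark) for $s_n'$; thus the map $\psi \mapsto (s_n - s_n' - \phi_0)|_{\frac{I_n}{10}}$ is, to leading order, multiplication by a nonzero constant. A contraction argument then produces a unique $\psi_n$ of size $|\psi_n|_{C^l} = O(\lambda_n^{-(1-\epsilon)q_{n-1}})$, the bound being dictated by the exponentially small discrepancy between the $r_{n-1}^+$-step and $r_n^+$-step stable directions of $A_{n-1}$ on $\frac{I_n}{10}$ (which differ by this amount due to the additional $q_{n-1}$ hyperbolic iterations separating the two scales). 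This establishes property 1 and $(1)_n$ simultaneously, and $(2)_n$ follows on $I_n \setminus \frac{I_n}{10}$ from the inductive lower bound on $|\phi_0|$ there combined with the negligibility of $\psi_n$.

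The principal obstacle is executing the contraction in $C^l$ rather than merely $C^0$: Lemma 2.2 controls $k$th derivatives only with loss factors $\mu^{-(1-4^{k-1}\epsilon)}$, which are barely subexponential for $k$ close to $l$. The twin choices $\epsilon \leq 4^{-10l}$ and the bounded-type constraint (3.1) on $\sum \log q_{n+1}/q_n$ ensure the compounded losses across all scales remain bounded, but tracking this bookkeeping carefully — especially the sharp $C^l$ bound on $\psi_n$ at every inductive step — is the delicate computational core of the proof, and is presumably the content of the deferred appendix that handles the higher-order case of Lemma 2.2.
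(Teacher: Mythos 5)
Your overall architecture (inductive modification of the rotation angle by a correction supported in $I_n$, Young's concatenation lemma plus the angle bound $(2)_{n-1}$ for hyperbolicity, and the $\lambda_n^{-(1-\epsilon)q_{n-1}}$ size of the correction coming from the discrepancy between the $r_{n-1}^+$-step and $r_n^+$-step stable directions) matches the paper. But there is a genuine gap at the heart of the argument: you never establish the \emph{exact} identity $(1)_n$, $s_n-s_n'=\phi_0$ on $\frac{I_n}{10}$. The paper's key observation is that because the correction $e_n$ vanishes outside $I_n$ and the orbit of $x\in I_n$ does not revisit $I_n$ before time $r_n^+(x)$, one has the exact factorization $A_n^{r_n^+(x)}(x)=A_{n-1}^{r_n^+(x)}(x)\cdot R_{e_n(x)}$ and $A_n^{-r_n^-(x)}(x)=R_{-e_n(T^{-r_n^-(x)}x)}\cdot A_{n-1}^{-r_n^-(x)}(x)$; since $s(AR_\theta)=s(A)-\theta$ and $s(R_\theta A)=s(A)$, this gives $s_n=\bar s_n-e_n$ and $s_n'=\bar s_n'$ \emph{exactly}, so the single explicit choice $e_n=(\bar s_n-\bar s_n')-(s_{n-1}-s_{n-1}')$ on $\frac{I_n}{10}$ forces $(1)_n$ with no iteration. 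Your replacement --- a contraction argument based on the claim that $\psi\mapsto (s_n-s_n'-\phi_0)|_{\frac{I_n}{10}}$ is ``to leading order multiplication by a nonzero constant'' --- is not substantiated: Lemma 2.2 controls derivatives of $s_n$ in $x$, not the Fr\'echet derivative of $s_n$ with respect to the perturbation $\psi$ in $C^l$, and without the factorization above you have no bound on that derivative, hence no contraction. Your base case is worse off: you assert that $(1)_N$ ``holds at scale $N$'' because $s_N-s_N'$ agrees with $\phi_0$ up to exponentially small corrections, but $(1)_N$ is an exact identity and approximate equality does not yield it; the paper must already insert the correction $e_N$ at level $N$ for precisely this reason. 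The exactness matters later, since the upper-bound construction of $\tilde A_n$ relies on exact alignment $\tilde s_n=\tilde s_n'$ to invoke the cancellation lemma.

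Two secondary problems. First, you conflate $\phi_0$ (the target profile ${\rm sgn}(x-c_i)|x-c_i|^{l+1}$ for the difference $s_n-s_n'$) with the rotation angle of the cocycle: the paper's initial angle is $\phi\approx\frac{\pi}{2}+\phi_0$ near $c_i$, and it is the passage of $\phi$ through $\frac{\pi}{2}$ that creates the critical points; a cocycle $\Lambda R_{\phi_0}$ with $\phi_0\approx 0$ on $I_N$ would be uniformly hyperbolic there and useless for the construction. Second, you give no mechanism for gluing the correction $C^l$-smoothly across $\partial\bigl(\frac{I_n}{10}\bigr)$ while keeping it zero outside $I_n$; the paper uses degree-$(2l+1)$ polynomial interpolants $h_n^{\pm}$ on $I_n\setminus\frac{I_n}{10}$ with Cramer's-rule estimates, and the lower bound $(2)_n$ on that annulus depends on controlling these interpolants, not merely on the size of $\phi_0$ there.
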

\vskip 0.3cm

\noindent \Proof { \it The construction of $A_N(x)$:}
Let $c_1, c_2\in \mathbb{S}^1$ with $c_1\in
[0, \pi)$, $c_2=c_1+\pi$ and $\delta_0$ a small positive number. We define $\phi_0$ on $\{x||x-c_1|\le \delta_0\}\bigcup
\{x||x-c_2|\le \delta_0\}$ as follows.
\begin{equation}\label{phi0}
\phi_{0}(x)=\left\{\begin{array}{ll} \phi_{01}(x),& \ \ |x-c_1|<\delta_0;\\
 -\phi_{02}(x)\ ({\rm\ or\ } \phi_{02}(x)),& \ \ |x-c_2|<\delta_0,
  \end{array}
  \right.
\end{equation}
and
where
\begin{equation}\label{phi01}\phi_{0i}(x)={\rm sgn}(x-c_i)|x-c_i|^{l+1}, \  \ i=1, 2.
\end{equation}
Then we define $\phi(x)$
be a lift of a ${\cal C}^{l}$ function on $\mathbb{S}^1$
satisfying the following.\\

\noindent(a)  $$\phi(x)=\left\{\begin{array}{ll}\phi_{01}(x),&\quad |x-c_1|\le \delta_0;\\
-\phi_{02}(x)\
({\rm or}\ \pi+\phi_{02}(x),\ {\rm respectively}),&\quad |x-c_2|\le \delta_0.
\end{array}
\right.
$$
\\

\noindent (b)  $\forall |x-c_i|>\delta_0,\ i=1, 2$,
$|\phi(x)-k\pi|> \delta_0^{l+1}$ for any $k\in \ZZ$.

\vskip 0.2cm \noindent \begin{Remark} One can either choose
$\phi(x)$ to be a $2\pi$ periodic function (see Fig. 1), which
corresponds to a cocycle homotopic to the identity, or to be the
identity plus a $2\pi$-periodic  function (see Fig. 2), which
corresponds to a cocycle non-homotopic to the identity.
\end{Remark}
\begin{figure}
  \centering
    \resizebox{6.7in}{2.9in}
{ \includegraphics{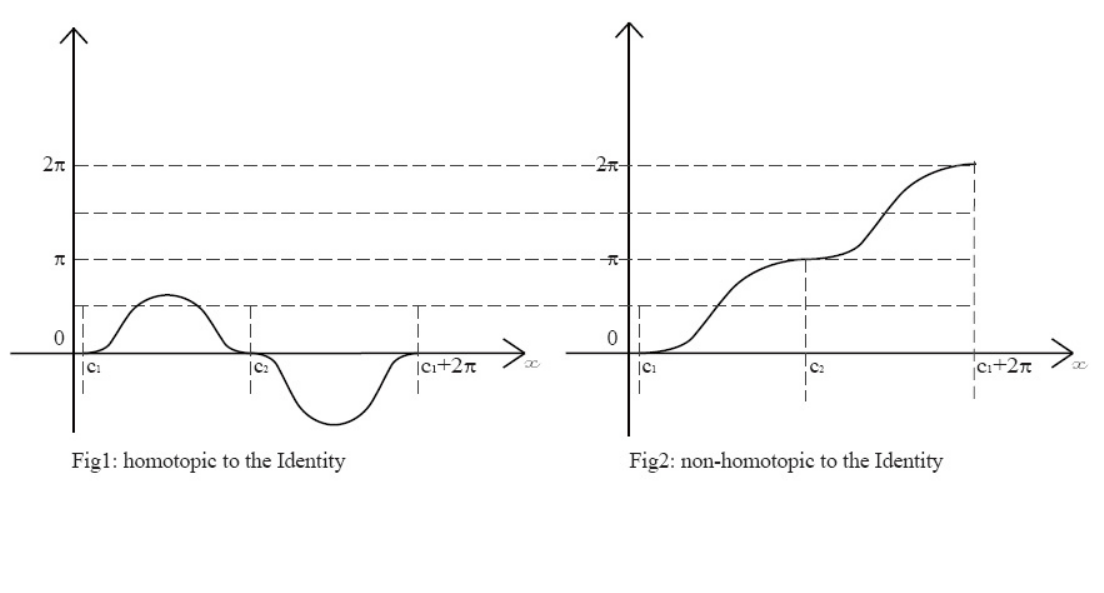}}
\end{figure}


 Let $A=\Lambda \cdot R_{\frac{\pi}{2}-\phi(x)}$ which belongs to $ {\cal C}^{l}(\mathbb{S}^1,
SL(2,\mathbb{R}))$. From  \cite{[Young]},  there exists a (large) $\l^*>0$ depending on
$\phi,\ l$ and $\e$
such that \beq \label{0hyperbolic}\{A(x), \cdots,
A(T^{r^+_{N}(x)-1}x)\}\  {\rm}\ is\ \l^{}-hyperbolic,\quad \forall
x\in I_N\eeq if $\l>\l^*$. \\

Let ${\bar s}_N(x)=\overline{s(A^{{r^+_{N}}}(x))}$ and ${\bar
s}'_N(x)=\overline{s(A^{-{r^-_{N}}}(x))}$.
Define $e_{N}(x)$ to be  the following $2\pi$-periodic
function:
$$
e_{N}(x)=\left\{\begin{array}{ll}
\phi_0(x)-({\bar s}_{N}-{\bar s}_{N}')(x) &\quad\  x\in \frac{I_{N}}{10}\\
\\
h^{\pm}_{N}(x), &\quad\  x\in I_{N}\backslash \frac{I_{N}}{10}\\
\\
0,& x\in \mathbb{S}^1\backslash I_{N}
\end{array}\right.
$$
where $h^{\pm}_{N}(x)$, restricted in each interval of
$I_{N}\backslash \frac{I_{N}}{10}$,  is a $\mathbb{C}^l-$function satisfying
\beq\label{h_N}\begin{array}{ll}
&\frac{d^jh^{\pm}_{N}}{dx^j}(c_{i}\pm\frac{1}{10q_{N}^2})=\frac{d^j\phi_0}{dx^j}(c_{i}\pm\frac{1}{10q_{N}^2})-
\frac{d^j({\bar s}_{N}-{\bar s}_{N}')}{dx^j}(c_{i}\pm\frac{1}{10q_{N}^2})\\
\\
&\frac{d^jh^{\pm}_{N}}{dx^j}(c_{i}\pm\frac{1}{q_{N}^2})=0,\quad i=1,
2, \quad 0\le j\le l\\
\\
&|h_N^{\pm}(x)|\le 4  \|\phi\|_{\mathbb{C}^1}\cdot\l^{-(1-\e)}.
\end{array}
\eeq
From Lemma \ref{Younglemma3}, we have $|\phi_0(x)-({\bar s}_{N}-{\bar s}_{N}')(x)|\le 4 \|\phi\|_{\mathbb{C}^1}\cdot\l^{-(1-\e)},$ which implies the existence of $h_N^{\pm}$.

Now we define $A_{N}=  \Lambda\cdot R_{\frac{\pi}{2}-\phi_N(x)}$ where
$\phi_N(x)=\phi(x)+e_N(x)$ is a modification of $\phi(x)$.
Property 2 listed in Proposition \ref{P2} for $A_N$ is a consequence
of  the following lemma.

\begin{Lemma}\label{rotation}
For $x\in I_N$, it holds that
$$
A_N^{ {r_N^+(x)}}(x)=A^{ {r_N^+(x)}}(x)\cdot R_{-e_N(x)}
 $$
and
$$
A_N^{- {r_N^-(x)}}(x)=R_{e_N(T^{-r_N^-(x)}x)}\cdot A^{- {r_N^-(x)}}(x).
 $$
\end{Lemma}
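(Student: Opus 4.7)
The plan is to exploit the abelian structure of $SO(2)$ together with the fact that $e_N$ is supported on $I_N$, so that the products telescope cleanly. First, since $\phi_N(x)=\phi(x)+e_N(x)$ and rotations commute, I would write
\[
A_N(x)=\Lambda\cdot R_{\phi(x)+e_N(x)}=\Lambda\cdot R_{\phi(x)}\cdot R_{e_N(x)}=A(x)\cdot R_{e_N(x)},
\]
and correspondingly $A_N^{-1}(x)=R_{-e_N(x)}\cdot A^{-1}(x)$. This is the only property of the perturbation I would use; the rest of the argument is combinatorial.

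Next, I would expand the forward product. By definition
\[
A_N^{r_N^+(x)}(x)=A_N(T^{r_N^+(x)-1}x)\cdots A_N(Tx)\,A_N(x),
\]
and substituting $A_N(T^j x)=A(T^j x)\,R_{e_N(T^j x)}$ gives an alternating product of $A$-factors and rotation factors. The crucial point is that $e_N$ is supported inside $I_N$, while the definition of $r_N^+(x)$ guarantees $T^j x\notin I_N$ for every $j=1,2,\dots,r_N^+(x)-1$. Hence $e_N(T^j x)=0$ for each such $j$, so $R_{e_N(T^j x)}=\mathrm{id}$ and all the interior rotation factors disappear. Only the factor $R_{e_N(x)}$ at the right end survives, and the remaining $A$-factors collapse to $A^{r_N^+(x)}(x)$, yielding the first identity.

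The backward identity is proved analogously: write
\[
A_N^{-r_N^-(x)}(x)=A_N^{-1}(T^{-r_N^-(x)}x)\cdots A_N^{-1}(T^{-1}x),
\]
substitute $A_N^{-1}(T^{-j}x)=R_{-e_N(T^{-j}x)}\cdot A^{-1}(T^{-j}x)$, and note that by the defining property of $r_N^-(x)$ one has $T^{-j}x\notin I_N$ for $j=1,\dots,r_N^-(x)-1$. Thus all rotation factors except the outermost $R_{-e_N(T^{-r_N^-(x)}x)}$ vanish, and the remaining pieces telescope to $A^{-r_N^-(x)}(x)$.

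There is no real obstacle here; the only point to verify carefully is the combinatorics of which indices $j$ actually lie in $\{1,\dots,r_N^\pm(x)-1\}$ versus the two endpoints $j=0$ and $j=r_N^\pm(x)$ — the endpoints are where $I_N$-membership is possible and therefore where the surviving rotation factor comes from. Everything else follows from the commutativity of planar rotations and the support condition $\mathrm{supp}(e_N)\subset I_N$.
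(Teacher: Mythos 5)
Your proof is correct and is essentially the paper's own argument: both rest on the identity $A_N(y)=A(y)\cdot R_{e_N(y)}$ (i.e.\ $A^{-1}(y)A_N(y)=R_{\phi_N(y)-\phi(y)}$) together with the observation that $T^jx\notin I_N$ for $1\le j\le r_N^\pm(x)-1$, so all interior factors of the product coincide with those of $A$ and only the endpoint rotation survives. The bookkeeping of which endpoint carries the surviving factor (rightmost for the forward product, leftmost for the backward one) is handled correctly.
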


\Proof Obviously $T^ix\in \mathbb{S}^1\backslash I_N$ for $x\in I_N$ and
$1\le i\le  {r_N^+}(x)-1$. Since $A_N(x)=A(x)$ for $x\in
\mathbb{S}^1\backslash I_N$, we have that
$$
A_N^{ {r_N^+(x)}}(x)=A^{ {r_N^+(x)}}(x)\cdot (A^{-1}(x)A_N(x)),\quad
x\in I_N.
$$
From the definition, we have $A_N(x)=A(x)\cdot
R_{\phi(x)-\phi_N(x)}$, which implies
$A^{-1}(x)A_N(x)=R_{\phi(x)-\phi_N(x)}$. Thus we obtain the first
equation. Similarly, we can prove the second equation. \quad \qedbox
\vskip 0.3cm \noindent {\it Proof of Property 2 listed in
Proposition \ref{P2} for $A_N$}\quad From (\ref{0hyperbolic}), for
each $x\in I_{N}$, $A_{}(x), $ $A_{}(Tx),\cdots, A_{}(T^{
{r_N^+(x)}-1}x)$ is $\lambda_{N}-$hyperbolic. It is known that a
rotation does not change the norm of a vector. Thus from Lemma
\ref{rotation}, we know that for each $x\in I_{N}$, $A_{N}(x), $
$A_{N}(Tx),\cdots,$ $ A_{N}(T^{ {r_N^+}-1}x)$ is
$\lambda_{N}-$hyperbolic, which shows that $s_N(x)$ and $s'_N(x)$
are well-defined. \vskip 0.4cm
 Subsequently, we
have the following conclusion:
 \begin{Lemma}\label{tophi}
It holds that
$$e_N(x)=(s_N(x)-s'_N(x))-(\bar{s}_N(x)-\bar{s}'_N(x)),\quad x\in I_N.$$
\end{Lemma}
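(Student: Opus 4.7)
The plan is to derive the identity directly from Lemma \ref{rotation} together with two elementary facts about how left- and right-multiplication by a rotation affect the most contracted direction of an $SL(2,\RR)$ matrix.

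First, I would record these two facts. For any $M\in SL(2,\RR)$ with $\|M\|>1$ and any angle $\alpha$: since $R_{\alpha}$ is an isometry of $\RR^2$, $\|R_{\alpha}Mv\|=\|Mv\|$ for every $v$, so the minimum of $v\mapsto \|R_{\alpha}Mv\|$ over unit vectors is attained on the same directions as for $M$. Hence $s(R_{\alpha}M)=s(M)$, and projectively $\overline{s(R_\alpha M)}=\overline{s(M)}$. On the other hand, $\|MR_{\beta}v\|=\|M(R_{\beta}v)\|$ is minimized when $R_{\beta}v$ points in $s(M)$, i.e.\ when $v=R_{-\beta}s(M)$. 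Therefore $s(MR_{\beta})=R_{-\beta}s(M)$, and in the angular parametrization of $\PP^1$ this gives $\overline{s(MR_{\beta})}=\overline{s(M)}-\beta$.

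Next, I would apply these two facts to the expressions provided by Lemma \ref{rotation}. For $x\in I_N$, the forward identity $A_N^{r_N^+(x)}(x)=A^{r_N^+(x)}(x)\cdot R_{e_N(x)}$ together with the second fact gives
\[
s_N(x)=\overline{s\bigl(A^{r_N^+(x)}(x)\cdot R_{e_N(x)}\bigr)}=\bar{s}_N(x)-e_N(x).
\]
Similarly, the backward identity $A_N^{-r_N^-(x)}(x)=R_{-e_N(T^{-r_N^-(x)}x)}\cdot A^{-r_N^-(x)}(x)$ combined with the first fact yields
\[
s'_N(x)=\overline{s\bigl(R_{-e_N(T^{-r_N^-(x)}x)}\cdot A^{-r_N^-(x)}(x)\bigr)}=\bar{s}'_N(x),
\]
since left multiplication by a rotation does not affect the most contracted direction.

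Subtracting the two displayed equations gives
\[
s_N(x)-s'_N(x)=\bigl(\bar{s}_N(x)-\bar{s}'_N(x)\bigr)-e_N(x),
\]
which rearranges to the desired identity. The only subtle point is keeping consistent orientation conventions on $\PP^1$ so that the effect of right multiplication by $R_{e_N(x)}$ is a shift by $-e_N(x)$ rather than $+e_N(x)$; once that sign convention is fixed (consistent with the definition of $\Phi_{R_{\phi(x)}}$ used earlier), there is no computational obstacle — the lemma is a direct bookkeeping consequence of Lemma \ref{rotation}.
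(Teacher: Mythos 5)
Your proposal is correct and is essentially identical to the paper's own argument: the paper likewise records the facts $s(AR_\theta)=s(A)-\theta$ and $s(R_\theta A)=s(A)$ (its equation (\ref{factonrotation})), applies them to the two identities of Lemma \ref{rotation} to get $s_N(x)=\bar{s}_N(x)-e_N(x)$ and $s'_N(x)=\bar{s}'_N(x)$, and subtracts. No further comment is needed.
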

\Proof Since a rotation does not change the norm of a vector, for a
hyperbolic matrix $A$ and a rotation matrix $R_{\t}$, we have
\beq\label{factonrotation} s(A\cdot R_{-\t})=s(A)+\t,\quad
s(R_{\t}\cdot A)=s(A). \eeq From Lemma \ref{rotation}, we have
$$
s_{N}(x)=\bar{s}_N(x)+{e_N(x)}, \quad s'_{N}(x)=\bar{s}_N'(x).
$$
Thus
$$\phi_N(x)-\phi(x)=(s_N(x)-s'_N(x))-(\bar{s}_N(x)-\bar{s}_N'(x)),\quad
x\in I_N,$$ which concludes the proof.\hfill{}  \qedbox \vskip 0.6cm Property 3  listed in
Proposition \ref{P2} for  $A_N$  is a consequence of the next lemma.

\begin{Lemma}  $s_{N}(x)-s_{N}'(x)$ coincides
with $\phi_0(x)$ on $\frac{I_N}{10}$. Moreover, on $I_N\backslash \frac{I_N}{10}$,
\[|s_{N}(x)-s_{N}'(x)-\phi_0(x)|\le
\frac{1}{(20q_N^2)^{l+1}},\] if $\lambda>q_N^{8(l+1)}\cdot |\phi\|_{\mathbb{C}^1}$ and $q_N>20$.
\end{Lemma}
\begin{Proof}
From the definition of $e_N(x)$, we have
$e_N(x)=\phi_0(x)-(\bar{s}_N-\bar{s}_{N}')(x)$ on $\frac{I_N}{10}$.
Thus by Lemma \ref{tophi}, we have for each $x\in \frac{I_N}{10}$,
$(s_{N}-s_{N}')(x)=(\bar{s}_{N}-\bar{s}_{N}')(x)+e_N(x)=(\bar{s}_N-\bar{s}_{N}')(x)+\phi_0(x)-(\bar{s}_N-\bar{s}_{N}')(x)
=\phi_0(x)$. More generally, for each $x\in I_N$, we have
$(s_{N}-s_{N}')(x)=(\bar{s}_N-\bar{s}_{N}')(x)+e_N(x)=\phi_0(x)+(\bar{s}_N-\bar{s}_{N}'-\phi_0)(x)+e_N(x)$.
Hence the last part of this lemma can be obtained from the construction of $e_N$ if $\lambda>q_N^{8(l+1)}\cdot |\phi\|_{\mathbb{C}^1}$ and $q_N>20$..
\end{Proof}
 \hfill{}  \qedbox \vskip 0.3cm

 The construction of $A_N$ is thus finished except the verification of Property 1, which will be done for all $n$ together later.
 Assuming that $A_N, \cdots, A_{n-1}$ satisfying  the  properties listed in Proposition \ref{P2} have been constructed, we then construct $A_n$.
 \vskip 0.3cm

 \noindent
 {\it The construction of ${A_n}$: }  The construction is similar to that of $A_N$.
By inductive assumptions, the sequence $\{A_{n-1}(x), \cdots,
A_{n-1}(T^{r_{n-1}^+(x)-1}x)\}$ is $\l_{n-1}-$hyperbolic. Moreover,
the functions $s_{n-1}(x)$ and $s'_{n-1}(x)$ satisfy:
$$
\begin{array}{ll}
&{\rm (1)_{n-1}}\quad s_{n-1}(x)-s'_{n-1}(x)=\phi_0(x)\quad {\rm\ on}\ \frac{I_{n-1}}{10};\\
\\
 &{\rm (2)_{n-1} }\quad |s_{n-1}(x)-s'_{n-1}(x)|\ge\frac12|\phi_0(x)|\ge \frac{1}{(20q_{n-1}^2)^{l+1}},\quad
x\in I_{n-1}\backslash\frac{I_{n-1}}{10}.
 \end{array}
$$

\vskip 0.5cm
To construct $A_n(x)$ with desired properties, we need the following lemmas.

\begin{Lemma}\label{finiteLE} Let $x_0, \ldots, x_m$ be a $T-$orbit with $x_0, x_m\in I_n$ and $x_i\not\in I_n$ for $0<i<m$. Then
$\{ A_{n-1}(x_0), \ldots, A_{n-1}(x_{m-1})\}$ is $\l_n$-hyperbolic.
\end{Lemma}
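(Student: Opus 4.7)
The plan is to decompose the orbit $x_0,\ldots,x_m$ at its intermediate returns to the larger interval $I_{n-1}$, invoke Property 2 of the inductive hypothesis to recognize each piece as $\lambda_{n-1}$-hyperbolic, and then glue these pieces together via Lemma \ref{Younglemma5}, using Properties $(1)_{n-1}$ and $(2)_{n-1}$ to bound the gluing angles from below. Since $q_n>q_{n-1}$ we have $I_n\subset I_{n-1}$, and in particular $x_0,x_m\in I_{n-1}$. Let $0=t_0<t_1<\cdots<t_k=m$ enumerate the times $i\in[0,m]$ with $x_i\in I_{n-1}$; by the definition of $r_{n-1}^+$ one has $t_{j+1}-t_j=r_{n-1}^+(x_{t_j})\ge q_{n-1}$, hence $k-1\le m/q_{n-1}$.

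At each intermediate junction $x_{t_j}$ (for $1\le j\le k-1$) one has $x_{t_j}\in I_{n-1}\setminus I_n$. The stable direction of the upcoming $r_{n-1}^+$-sub-block at $x_{t_j}$ is exactly $s_{n-1}(x_{t_j})$, while the stable direction of the inverse of the accumulated past product equals $s'_{n-1}(x_{t_j})$ when $j=1$, and for $j>1$ differs from it by at most $O(\lambda_{n-1}^{-q_{n-1}})$ by the cone-contraction arising from the extra $\lambda_{n-1}$-hyperbolic past. Thus the angle entering Lemma \ref{Younglemma5} is essentially $\theta_j:=\tfrac12|s_{n-1}(x_{t_j})-s'_{n-1}(x_{t_j})|$, and Properties $(1)_{n-1}$--$(2)_{n-1}$ bound it from below: on $I_{n-1}\setminus\tfrac{I_{n-1}}{10}$ one has $\theta_j\ge 20^{-l}q_{n-1}^{-2l}$, while on $\tfrac{I_{n-1}}{10}\setminus I_n$ Property $(1)_{n-1}$ gives $\theta_j=|\phi_0(x_{t_j})|=|x_{t_j}-c_i|^{l+1}\ge q_n^{-2(l+1)}$. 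In both cases $\theta_j\ge C(l)\,q_n^{-2(l+1)}$.

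Iterating Lemma \ref{Younglemma5} along the $k-1$ junctions with $\mu=\lambda_{n-1}$ then yields
\[
\|A_{n-1}^m(x_0)\|\;\ge\;\lambda_{n-1}^{m(1-\e)}\prod_{j=1}^{k-1}\theta_j\;\ge\;\lambda_{n-1}^{m(1-\e)}\bigl(C(l)\,q_n^{-2(l+1)}\bigr)^{m/q_{n-1}}.
\]
The definition $\log\lambda_{n-1}-\log\lambda_n=10l\log q_n/q_{n-1}$ converts this, since $10l>2(l+1)$ for $l\ge 1$, into $\|A_{n-1}^m(x_0)\|\ge\lambda_n^{m(1-\e)}$. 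The analogous bound for intermediate partial products $\|A_{n-1}^i(x_0)\|$ with $0<i<m$ follows by applying the same decomposition to the truncated orbit up to index $i$, while the backward statement follows symmetrically after exchanging $(s_{n-1},r_{n-1}^+)$ with $(s'_{n-1},r_{n-1}^-)$; together these give $\lambda_n$-hyperbolicity.

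The delicate step is the concatenation bookkeeping: each use of Lemma \ref{Younglemma5} contributes both an angle factor $\theta_j$ and a $(1-\e)$-exponent loss, so one must check that neither the cumulative angle product nor the nested $(1-\e)^k$ effect outruns the slack $10l\log q_n/q_{n-1}$ that was engineered into the definition of $\lambda_n$; the numerology is arranged so that the factor $10l$ beats $2(l+1)$ with room to spare. A secondary concern is confirming that the accumulated past stable direction $s(C^{-1})$ stays close to $s'_{n-1}$ at deeper junctions $j>1$, which is a standard consequence of one-step forward cone contraction over a $\lambda_{n-1}$-hyperbolic block and can be absorbed into the $\theta_j$ estimate without loss.
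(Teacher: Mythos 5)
Your proposal is correct and follows essentially the same route as the paper: decompose the orbit at its returns to $I_{n-1}$, use $(1)_{n-1}$--$(2)_{n-1}$ to bound the junction angles below by a power of $q_n^{-1}$, and iterate Lemma \ref{Younglemma5}, with the $10l\log q_n/q_{n-1}$ drop built into $\lambda_n$ absorbing the accumulated angle factors. You supply more detail than the paper's sketch (the truncated-orbit argument for intermediate partial products, the backward direction, and the cone-contraction approximation of the past stable direction by $s'_{n-1}$), but the underlying argument is identical.
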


\Proof \quad The proof is similar to that in \cite{[Young]}. For the sake of
the readers, we will give the sketch of the proof. Assume that
$0=j_0<j_1\cdots <j_k=m$ are the return times of $x_0$ to $I_{n-1}$.
Since \beq\label{angle} \angle(s(A_{n-1}^{-j_i}(x_{j_i})),\ \
s(A_{n-1}^{j_{i+1}-j_i}(x_{j_i})))
>\frac{1}{2}|s'_{n-1}(x_{j_i})-s_{n-1}(x_{j_i})|>\frac{1}{8q_n^{2(l+1)}},
\eeq from the induction assumption and Lemma \ref{Younglemma5}, we obtain that
$$
\|A_{n-1}^{j_i}(x_0)\|\ge {\l}_n^{j_i(1-\e)},\quad i=1, \ldots, k.
$$\qedbox

\vskip 0.3cm Let $\bar{s}_n(x)=\overline{s(A_{n-1}^{ {r_n^+}}(x))}$
and $\bar{s}'_n(x)=\overline{s(A_{n-1}^{- {r_{n}^-}}(x))}$, $x\in
I_n$.
Define $e_{n}(x)\in {\cal C}^{l}$ be the following $2\pi$-periodic
function:
$$
e_{n}(x)=\left\{\begin{array}{ll}
(s_{n-1}-s_{n-1}')(x)-({\bar s}_{n}-{\bar s}_{n}')(x) &\quad\  x\in \frac{I_{n}}{10}\\
\\
h^{\pm}_{n}(x), &\quad\  x\in I_{n}\backslash \frac{I_{n}}{10}\\
\\
0,& x\in \mathbb{S}^1\backslash I_{n}
\end{array}\right.
$$
where $h^{\pm}_{n}(x)$ is a polynomial of degree $2l+1$ restricted in each interval of
$I_{n}\backslash \frac{I_{n}}{10}$ satisfying
$$\begin{array}{ll}
&\frac{d^jh^{\pm}_{n}}{dx^j}(c_{i}\pm\frac{1}{10q_{n}^2})=\frac{d^j(s_{n-1}-s_{n-1}')}{dx^j}(c_{i}\pm\frac{1}{10q_{n}^2})-
\frac{d^j({\bar s}_{n}-{\bar s}_{n}')}{dx^j}(c_{i}\pm\frac{1}{10q_{n}^2})\\
\\
&\frac{d^jh^{\pm}_{n}}{dx^j}(c_{i}\pm\frac{1}{q_{n}^2})=0,\quad i=1,
2, \quad 0\le j\le l.
\end{array}
$$
\noindent Define $\phi_{n}(x)=\phi_{n-1}(x)+e_{n}(x)$.
Let $A_{n}(x)=\Lambda \cdot R_{\frac{\pi}{2}-\phi_{n}(x)}$.
 The property 2 in Proposition \ref{P2} for $A_n$  can be derived from the following lemma:
\begin{Lemma}\label{rotationn}
For $x\in I_n$, it holds that
$$
A_n^{ {r_n^+(x)}}(x)=A_{n-1}^{ {r_n^+(x)}}(x)\cdot R_{-e_n(x)}
 $$
and
$$
A_n^{- {r_n^-(x)}}(x)=R_{e_n(T^{-r_n^-(x)}x)}\cdot A_{n-1}^{- {r_n^-(x)}}(x).
 $$
\end{Lemma}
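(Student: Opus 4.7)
The proof will be a direct analogue of Lemma \ref{rotation}, with $A$ replaced by $A_{n-1}$ and $e_N$ replaced by $e_n$. The key structural facts that make this work are already in hand from the inductive construction: (i) $e_n(x)$ is supported in $I_n$, hence $\phi_n(x)=\phi_{n-1}(x)$ and therefore $A_n(x)=A_{n-1}(x)$ for all $x\in \SS^1\setminus I_n$; and (ii) the factorization $R_{\phi_{n-1}(x)+e_n(x)}=R_{\phi_{n-1}(x)}R_{e_n(x)}$, which lets us peel off a rotation by $e_n(x)$ from $A_n$ at the single point of $I_n$ in a return orbit.

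For the forward identity, I would fix $x\in I_n$ and note that by the very definition of $r_n^+(x)$ the intermediate iterates $Tx,T^2x,\ldots,T^{r_n^+(x)-1}x$ all lie in $\SS^1\setminus I_n$. Consequently $A_n(T^ix)=A_{n-1}(T^ix)$ for $1\le i\le r_n^+(x)-1$, and only the first factor of the product differs. Using $A_n(x)=\Lambda R_{\phi_{n-1}(x)}R_{e_n(x)}=A_{n-1}(x)\,R_{e_n(x)}$, the whole product collapses to
\[
A_n^{r_n^+(x)}(x)=\bigl(A_{n-1}(T^{r_n^+(x)-1}x)\cdots A_{n-1}(Tx)\bigr)\,A_{n-1}(x)\,R_{e_n(x)}=A_{n-1}^{r_n^+(x)}(x)\cdot R_{e_n(x)},
\]
which is the first identity.

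For the backward identity, set $y=T^{-r_n^-(x)}x$. By the definition of $r_n^-(x)$, $y\in I_n$ while $T^{-j}x\notin I_n$ for $1\le j\le r_n^-(x)-1$, so $A_n^{-1}(T^{-j}x)=A_{n-1}^{-1}(T^{-j}x)$ at every intermediate step. At the leftmost factor $A_n^{-1}(y)$ we invert the factorization: $A_n(y)=A_{n-1}(y)R_{e_n(y)}$ gives $A_n^{-1}(y)=R_{-e_n(y)}\,A_{n-1}^{-1}(y)$. Substituting this into
\[
A_n^{-r_n^-(x)}(x)=A_n^{-1}(y)\,A_n^{-1}(T^{-r_n^-(x)+1}x)\cdots A_n^{-1}(T^{-1}x)
\]
yields $R_{-e_n(T^{-r_n^-(x)}x)}\cdot A_{n-1}^{-r_n^-(x)}(x)$, as required.

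There is no real obstacle here: the lemma is essentially a bookkeeping identity and requires only the support property of $e_n$ together with the multiplicativity of rotations. The only point deserving care is that $A_n(x)=A_{n-1}(x)R_{e_n(x)}$ (rather than $R_{e_n(x)}A_{n-1}(x)$), which is why the rotation appears on the \emph{right} in the forward product and on the \emph{left} (with a sign change, via inversion) in the backward product; this matches the statement of the lemma exactly.
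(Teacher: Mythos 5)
Your proof is correct and follows exactly the argument the paper gives for the corresponding statement at stage $N$ (Lemma \ref{rotation}), which the paper implicitly reuses here: the support of $e_n$ in $I_n$ forces $A_n=A_{n-1}$ at all intermediate iterates, and the additivity $R_{\phi_{n-1}+e_n}=R_{\phi_{n-1}}R_{e_n}$ isolates the single rotation factor on the correct side in each product. Nothing is missing.
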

Similar to the proof of Lemma \ref{tophi}, we have the following result:
\begin{Lemma}\label{rotationn}
It holds that
$$e_n(x)=(s_n(x)-s'_n(x))-({\bar s}_{n}(x)-{\bar s}_{n}'(x)),\quad x\in I_n.$$
\end{Lemma}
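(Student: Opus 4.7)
The plan is to mimic exactly the proof of Lemma \ref{tophi}, the analogous statement at stage $N$, using the preceding Lemma \ref{rotationn} (the matrix identities expressing $A_n^{\pm r_n^{\pm}}$ as $A_{n-1}^{\pm r_n^{\pm}}$ twisted by a rotation by $\pm e_n$) together with the rotation-invariance identities \eqref{factonrotation}.

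First I would record that, because Property 2 of Proposition \ref{P2} has been verified for $A_n$ (it was derived from the previous Lemma \ref{rotationn} in exactly the way Property 2 was derived for $A_N$ from the earlier Lemma \ref{rotation}), the matrices $A_n^{r_n^+(x)}(x)$ and $A_n^{-r_n^-(x)}(x)$ are hyperbolic for every $x\in I_n$, so that $s_n(x)$ and $s'_n(x)$ are well defined. Next I would apply $s(\,\cdot\,)$ to the identities of the preceding Lemma \ref{rotationn}. Using $s(A\cdot R_\theta)=s(A)-\theta$ we get
\[
s_n(x)\;=\;s\!\left(A_{n-1}^{r_n^+(x)}(x)\cdot R_{e_n(x)}\right)\;=\;\bar s_n(x)-e_n(x),
\]
and using $s(R_\theta\cdot A)=s(A)$ we get
\[
s'_n(x)\;=\;s\!\left(R_{-e_n(T^{-r_n^-(x)}x)}\cdot A_{n-1}^{-r_n^-(x)}(x)\right)\;=\;\bar s'_n(x).
\]
Subtracting the two lines and rearranging produces
\[
e_n(x)\;=\;\bigl(\bar s_n(x)-\bar s'_n(x)\bigr)-\bigl(s_n(x)-s'_n(x)\bigr),\qquad x\in I_n,
\]
which is the claim.

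I do not expect any genuine obstacle here: the two nontrivial ingredients — the hyperbolicity of the relevant finite products (so that the most-contracted directions exist) and the rotation-conjugation identities — have both already been established. The only minor point of care is to apply the second identity in \eqref{factonrotation} at the shifted point $T^{-r_n^-(x)}x$, noting that the rotation factor on the left of $A_{n-1}^{-r_n^-(x)}(x)$ has no effect on its most-contracted direction, so that $s'_n(x)=\bar s'_n(x)$ exactly, without any error term. The structure of the argument is literally the stage-$n$ analogue of Lemma \ref{tophi}, so no new estimates are needed.
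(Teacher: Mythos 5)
Your proposal is correct and is exactly the argument the paper intends: the paper gives no written proof here, saying only ``Similar to the proof for Lemma \ref{tophi}'', and your derivation ($s_n=\bar s_n-e_n$ from $s(A\cdot R_\theta)=s(A)-\theta$, $s'_n=\bar s'_n$ from $s(R_\theta\cdot A)=s(A)$, then subtract) is precisely that stage-$n$ transcription of Lemma \ref{tophi}.
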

\vskip 0.3cm The property 3 in Proposition \ref{P2}
for $A_n$  can be obtained by the following lemma:

\begin{Lemma} Let $\l_n>\max\{8(l+1),\ q_N^{8(l+1)}\cdot |\phi\|_{\mathbb{C}^1}\}$ and $q_N\ge (10+M)^{10}$. Then it holds that $s_{n}(x)-s_{n}'(x)$ coincides
with $\phi_0(x)$ on $\frac{I_n}{10}$. Furthermore, on $I_n\backslash\frac{I_n}{10}$,
\[ |(s_{n}(x)-s_{n}'(x))-(s_{n-1}(x)-s_{n-1}'(x))|\le
\frac{1}{(20q_n^2)^{l+1}}.\]
\end{Lemma}
\begin{Proof} From the definition of $e_n(x)$, we have
$e_n(x)=(s_{n-1}-s_{n-1}')(x)-({\bar s}_{n}-{\bar s}_{n}')(x)$ on $\frac{I_n}{10}$, which together with
Lemma \ref{rotationn} implies that for each $x\in \frac{I_n}{10}$,
$(s_{n}-s_{n}')(x)=({\bar s}_{n}-{\bar s}_{n}')(x)+e_n(x)=(s_{n-1}-s_{n-1}')(x)$. Since
$(s_{n-1}-s_{n-1}')(x)=\phi_0(x)$ on $\frac{I_{n-1}}{10}$ by induction assumption $(1)_{n-1}$, we obtain the first part of the lemma.

For each $x\in I_n\backslash\frac{I_n}{10}$, we have
$(s_{n}-s_{n}')(x)=({\bar s}_{n}-{\bar s}_{n}')(x)+e_n(x)=(s_{n-1}-s_{n-1}')(x)+({\bar s}_{n}-s_{n-1}+s_{n-1}'-{\bar s}_{n}')(x)+e_n(x)$. Recall that $\lambda_n>\lambda_{\infty}>\lambda^{1-\epsilon}\gg 1$ and $q_n\le Mq_{n-1}$, we have $\lambda_n^{q_{n-1}}\gg q_n^{2l}$.
Hence the last part of this lemma can be obtained from the
induction assumption $(2)_{n-1}$ for $(s_{n-1}-s'_{n-1})(x)$ on $I_{n-1}$ and Lemmas \ref{sn-s(n-1)} and \ref{en} if
$\l_n>\max\{8(l+1),\ q_N^{8(l+1)}\cdot |\phi\|_{\mathbb{C}^1}\}$ and $q_n\ge q_N\ge (10+M)^{10}$.
\end{Proof} \hfill{}\qedbox\vskip 0.4cm

The property 1 in Proposition \ref{P2} for all $A_n, n= N, N+1, \cdots$ is obtained by the definition of $\phi_{n-1}(x),\ \phi_n(x)$ and the following lemmas.

\vskip 0.2cm

\noindent \begin{Lemma}\label{sn-s(n-1)} Let $\l, N \gg 1$. Then for $x\in I_n$, $s_{n-1}, s'_{n-1}, \bar{s}_n, \bar{s}'_n$ are
${\cal C}^{l}$ curves. Moreover, for any $k\le \min\{l,  r_{n-1}^{+\frac 1{10}}\}$, it holds that

 \begin{equation}\label{inductiveerror}
|\bar{s}_n-s_{n-1}|_{{\cal C}^{k}},
|\bar{s}'_n-s'_{n-1}|_{{\cal C}^{k}}\le \|\phi_{n-1}\|_k\cdot  \l^{-\frac 13(r^+_{n-1})^{\frac 23}}.\eeq

\end{Lemma}
\Proof The lemma is proved in the Appendix.\hfill{} \qedbox\\
  \begin{Remark} In the appendix, we will prove that Lemma \ref{sn-s(n-1)} not only holds for $A=\Lambda R_{\frac{\pi}{2}-\phi(x)}$ defined in this
 section, but also  holds for the one defined in  Section 5.  So it is applicable when we construct the $C^\infty$ counter-example in Section 5. \end{Remark}

 When we construct the finite smooth counter-examples, $l$ is fixed.  One can take $\l$ sufficiently large (depending on $l$) such that
 \beq\label{finiteensn}
|\bar{s}_n-s_{n-1}|_{{\cal C}^{l}},
|\bar{s}'_n-s'_{n-1}|_{{\cal C}^{l}}\le  \l^{-(r^+_{n-1})^{\frac 15}}< \l^{-q_{n-1}^{\frac 15}},\eeq
holds for all $n> N$.

\begin{Lemma}\label{en} For any $x\in \mathbb{S}^1$, it holds that
$|e_{n}(x)|_{{\cal C}^{l}}\le \lambda_n^{-q_{n-1}^{\frac1 {10}}}$.
\end{Lemma}
\begin{Proof}
From Lemma \ref{sn-s(n-1)}, we have that for fixed $l$ and $\l, n\gg 1$, $|e_{n}(x)|_{{\cal
C}^{l}}\le \lambda_n^{-q_{n-1}^{\frac 15}}$ for $x\in \frac{I_{n}}{10}$.
Consequently from Cramer's rule, $|h^{\pm}_{n}(x)|_{{\cal
C}^{l}}=O(\lambda_n^{-q_{n-1}^{\frac 15}})$, which implies $|e_{n}(x)|_{{\cal
C}^{l}}\le \lambda_n^{-q_{n-1}^{\frac 1{10}}}$ for $x\in \mathbb{S}^1$. \hfill{}  \qedbox
\end{Proof}\\

\vskip 0.3cm By property 1 in Proposition \ref{P2}, $ A_n(x)$
converge to a cocycle $D_l(x)$ in ${\cal C}^{l}$-topology.
Next we estimate the lower bound of the Lyapunov exponent of $D_l(x)$.

\begin{Theorem}\label{lowerbound}
The Lyapunov exponent $L(D_l)$ of $D_l(x)$ has a lower bound
$(1-4\epsilon)\log\lambda$.
\end{Theorem}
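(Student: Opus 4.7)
The plan is to bound $L(A_n)$ uniformly from below for large $n$, and then apply upper semicontinuity of the Lyapunov exponent. Since $L$ is an infimum of $C^0$-continuous functionals $A\mapsto \frac{1}{n}\int\log\|A^n(x)\|\,dx$, it is upper semicontinuous in $L^\infty$ and therefore in $C^l$. By Property 1 of Proposition \ref{P2}, $A_n\to D_l$ in $C^l$, so $L(D_l)\geq \limsup_n L(A_n)$. It thus suffices to show $L(A_n)\geq (1-4\epsilon)\log\lambda$ for all $n$ sufficiently large.

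To estimate $L(A_n)$, introduce the first-return induced map $\tilde{T}\colon I_n\to I_n$ of the rotation $T$ and the induced cocycle $\hat{A}_n(x):=A_n^{r_n^+(x)}(x)$. By Property 2 of Proposition \ref{P2}, $\|\hat{A}_n(x)\|\geq \lambda_n^{(1-\epsilon)r_n^+(x)}$, so each $\hat{A}_n(x)$ is extremely hyperbolic. Its stable direction at the source $x$ is $s_n(x)$, and its unstable direction at the target $\tilde{T}x$ equals $s'_n(\tilde{T}x)$ (the forward block from $x$ and the backward block from $\tilde{T}x$ are inverses). For a product of very hyperbolic matrices the standard SVD telescoping estimate gives
\[
\log\|\hat{A}_n^K(x)\|\geq \sum_{k=0}^{K-1}\log\|\hat{A}_n(\tilde{T}^k x)\|+\sum_{k=1}^{K-1}\log\sin\theta_k-O(K),
\]
where $\theta_k\approx |s_n(\tilde{T}^k x)-s'_n(\tilde{T}^k x)|$; the concentration of the unstable direction of the past product at $s'_n$ at each return follows inductively from the huge singular-value ratios.

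Dividing by $K$ and applying Birkhoff's theorem to the ergodic system $(\tilde{T},\mu_{I_n})$, together with Kac's formula $\int_{I_n}r_n^+\,d\mu_{I_n}=2\pi/|I_n|$ and the identity $L(\hat{A}_n)=(2\pi/|I_n|)L(A_n)$, yields
\[
L(A_n)\geq (1-\epsilon)\log\lambda_n+\frac{1}{2\pi}\int_{I_n}\log |s_n(x)-s'_n(x)|\,dx.
\]
Property 3 of Proposition \ref{P2} bounds the integrand: on $I_n\setminus I_n/10$ we have $|s_n-s'_n|\geq 1/(20^lq_n^{2l})$, contributing $O(l\log q_n/q_n^2)$; on $I_n/10$ we have $|s_n-s'_n|=|x-c_i|^{l+1}$, whose logarithm is integrable and contributes another $O(l\log q_n/q_n^2)=o(1)$ as $n\to\infty$. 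Combined with $\log\lambda_n\geq (1-\epsilon)\log\lambda$, this yields $L(A_n)\geq (1-\epsilon)^2\log\lambda-o(1)\geq (1-4\epsilon)\log\lambda$ for $n$ large enough, whence $L(D_l)\geq (1-4\epsilon)\log\lambda$ by upper semicontinuity.

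The main obstacle is the telescoping concatenation bound: Lemma \ref{Younglemma5} as stated loses a multiplicative $(1-\epsilon)$ per application and degrades exponentially when iterated naively. The additive form used above is valid precisely because each induced-cocycle matrix has singular-value ratio $\sim\lambda_n^{2(1-\epsilon)r_n^+}$, which makes the lower-order SVD terms and the $O(K)$ error negligible after normalization by $S_K\sim Kq_n^2$. A secondary subtlety is the logarithmic singularity of the angle integrand at the critical points $c_1,c_2$, which remains integrable only because the degeneracy exponent $l+1$ is finite and because $\phi_0$ has the explicit form $\pm|x-c_i|^{l+1}$ from Property 3.
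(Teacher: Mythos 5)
Your soft step is fine and is essentially the paper's: the authors use convergence of the finite-scale Lyapunov exponents plus $C^0$-continuity of $A\mapsto\frac{1}{N_0}\int\log\|A^{N_0}\|$, which is the same upper-semicontinuity mechanism you invoke. Where you genuinely diverge is the hard step: you try to bound the true asymptotic exponent $L(A_n)$ via the induced cocycle over the first-return map to $I_n$, Kac's formula, and a Birkhoff average of the angle defect $\log|s_n-s'_n|$ over \emph{all} returns. The paper instead proves only a \emph{finite-time} estimate (Proposition 3.2: blocks of length $\le q_n$ are $\lambda^{1-\epsilon}$-hyperbolic) on the set of \emph{nonresonant} points, which by definition never come closer than $1/q_k^2$ to the critical set $\{c_1,c_2\}$ within the relevant time window, and then discards the small resonant set using $\log\|A_n^{N_0}(x)\|\ge 0$. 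This difference is not cosmetic, and it is exactly where your argument has a gap.

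The gap is at the critical points. Your telescoping bound needs $\theta_k\gtrsim|s_n-s'_n|(\tilde T^kx)$, where $\theta_k$ is the angle between the contracted direction of the inverse of the \emph{entire} past product and that of the \emph{entire} future product. By Lemma A.2 these full-product directions agree with the one-return directions $s'_n,s_n$ only up to an error of order $\lambda_n^{-2q_n(1-\epsilon)}$. On $\frac{I_n}{10}$ one has $|s_n-s'_n|=|x-c_i|^{l+1}$, which drops below that error (and vanishes at $x=c_i$, where the cocycle has \emph{exact} cancellation in the sense of Lemma 4.1 --- this is the very mechanism $\tilde A_n$ exploits). Since a.e.\ orbit of the ergodic induced map enters every neighbourhood of $c_i$ infinitely often, you cannot discard these returns as a null set: at a return with $|x-c_i|^{l+1}\lesssim\lambda_n^{-q_n(1-\epsilon)}$ the concatenation lemma gives no gain at all, and the worst-case loss is not $(l+1)\log|x-c_i|$ but the full norm of an adjacent block; a naive worst-case accounting (frequency of bad returns times loss of the whole accumulated past) cancels the entire expansion. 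The statement can be rescued --- the loss at a totally cancelling return is bounded by twice the log-norm of the \emph{shorter} factor, i.e.\ $O(q_n^2\log\lambda)$, while such returns occur with frequency $O(q_n^2\lambda_n^{-q_n(1-\epsilon)/(l+1)})$, so the damage per unit time vanishes --- but this extra bookkeeping (and the re-alignment of directions after a cancellation) is precisely the content your "follows inductively from the huge singular-value ratios" elides. As written, the Birkhoff/Kac computation is not justified; either supply this deep-return analysis or retreat, as the paper does, to a finite-time estimate on nonresonant points.
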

\begin{proof}
From the subadditivity of the finite Lyapunov exponent, the finite Lyapunov exponent of a cocycle converges (to the Lyapunov exponent).
Thus there exists a large $N_0\ge N$ such that
$$
\left |\frac{1}{N_0}\int_{\mathbb{S}^1}\log \|D^{N_0}_{l}(x)\|dx-L(D_{l})\right |\le \epsilon.
$$

Since $A_n(x)$ converges to $D_l(x)$, there exists a large $N_1>N_0$ such that for any $n>N_1$, it holds that
$$
\left|\frac{1}{N_0}\int_{\mathbb{S}^1}\log \|D^{N_0}_{l}(x)\|dx-\frac{1}{N_0}\int_{\mathbb{S}^1}\log \|A^{N_0}_{n}(x)\|dx\right |\le \epsilon.
$$
Thus it is sufficient to prove $\frac{1}{N_0}\int_{\mathbb{S}^1}\log
\|A_{n}^{N_0}(x)\|dx\ge (1-3\epsilon)\log\lambda$ for sufficiently
large $n$.

We say that $x\in \mathbb{S}^1$ is nonresonant for $A_n(x)$ if
\beq\label{nonresonant}
 \left\{\begin{array}{ll}{\rm dist}(T^ix, {\cal C}_0)>\frac{1}{q_N^2}& \ {\rm for\ }\ 0\le i <q_N,\\
 \\
{\rm dist}(T^ix, {\cal C}_0)> \frac{1}{q_k^2}& {\rm for}\ q_{k-1}\le
i <q_k,\quad N<k\le n.
 \end{array}\right.
\eeq The  set of points with the nonresonant property (\ref{nonresonant})
has  Lebesgue measure at least $2\pi(1-\sum_{N\le k<n} \frac{1}{q_k})$, which is larger than
$2\pi(1-\frac{\e}{2\pi}) $ for $N\gg 1$.

\begin{Proposition}\label{P3}
For each $x\in \mathbb{S}^1$ with the nonresonant property
(\ref{nonresonant}), $(A_n(x), \cdots, A_n(T^{q_{n}-1}x))$ is
$\l^{1-\e}$-hyperbolic.
\end{Proposition}
\vskip -0.3cm
\Proof
 Let the trajectory in question be $x, Tx, \cdots .$ Let $j_0$ be the first time it is
in $I_N$, and let $n_0$ be s.t. $T^{j_0}x \in I_{n_0}\backslash I_{n_0+1}$. In general, let $j_i$ and $n_i$ be defined
so that $T^{j_i}x\in I_{n_i}\backslash I_{n_i+1}$, and $T^{j_{i+1}}x$ be the next return of $T^{j_i}x$ to $I_{n_i}$.
 Obviously, $j_{i+1}-j_i\ge q_{n_i}$. Moreover, from Proposition \ref{P2}, it holds that
$A_n(T^{j_i}x), \cdots, A_n(T^{j_{i+1}-1}x)$ is $\l_{\infty}-$hyperbolic.


Since $T^{j_i}x\not\in I_{n_i+1}$, from ${\rm (2)_n}$ in Proposition \ref{P2}, we have $\angle (s_n(T^{j_i}x), s'_n(T^{j_i}x))> (\frac{1}{20}|I_{n_i+1}|)^{l+1}$. Similar to (\ref{angle}), it holds that $\angle(s(A_{n}^{-j_i}(T^{j_i}x)),\ s(A_{n}^{j_{i+1}-j_i}(T^{j_i}x)))>\frac{1}{2}\angle (s_n(T^{j_i}x), s'_n(T^{j_i}x))$. Hence from Lemma \ref{Younglemma5}, it follows that
 $$\begin{array}{ll}
 \|A_n^{j_{i+1}}(x)\|&\ge  \|A_n^{j_i}(x)\|\cdot \|A_n^{j_{i+1}-j_i}(T^{j_i}x)\|\cdot \angle(s(A_{n}^{-j_i}(T^{j_i}x)),\ s(A_{n}^{j_{i+1}-j_i}(T^{j_i}x)))\\
 \\
 &\ge \|A_n^{j_i}(x)\|\cdot \l_{\infty}^{j_{i+1}-j_i}\cdot (\frac{1}{40}|I_{n_i+1}|)^{l+1}.\end{array}
 $$
 Inductively, we have
$$
 \|A_n^{j_{s}}(x)\|\ge  \|A_n^{j_{0}}(x)\|\cdot \l_{\infty}^{j_s-j_0}\cdot\prod_{i=1}^{s-1}(\frac{1}{40}|I_{n_i+1}|)^{l+1}.
$$
Similar to the proof of (\ref{0hyperbolic}), we have $\|A_n^{j_{0}}(x)\|\ge \l_{\infty}^{j_0}$. Consequently,
\begin{equation}\label{norminduction}
 \|A_n^{j_{s}}(x)\|\ge \l_{\infty}^{j_s}\cdot\prod_{i=1}^{s-1}(\frac{1}{40}|I_{n_i+1}|)^{l+1}.
\end{equation}

 Suppose $q_{m-1}\le j_s<q_m, N+1\le m\le n$. The nonresonant property prohibits $x_{j_i}$ from entering $I_m$ for $i<s$. For $k<m$,
 the number of $j_i$'s such that $n_i=k$ is less than $j_s/q_k$, since the smallest first return time $r_k$ for $x\in I_k$ satisfies $r_k\ge q_k$ (see the beginning of this section). Moreover at each one of these returns, the distance from $c_1$ and $c_2$ is $\ \ge (\frac{1}{40}|I_{k+1}|)^{l+1}= (40q_{k+1}^2)^{-(l+1)}$.

 Then

 $$-\frac{1}{j_s}\log \prod_{i=1}^{s-1}(\frac{1}{40}|I_{n_i+1}|)^{l+1} \le \frac{1}{j_s}\sum_{k=N}^{n-1}\frac{j_s}{q_k}\log (40q_{k+1}^2)^{(l+1)}
 \le 4(l+1) \sum_{k\ge N} \frac{\log q_{k+1}}{q_k}<\frac{\epsilon}{2}\ln \l_{\infty}$$
 if $\l\gg N\gg 1$. Equivalently, we have
 $$
 \prod_{i=1}^{s-1}(\frac{1}{40}|I_{n_i+1}|)^{l+1} \ge \l_{\infty}^{-\frac{\epsilon}{2} j_s}.
 $$
 Thus from (\ref{norminduction}), it holds that
 $$
 \|A_n^{j_{s}}(x)\|\ge  \l_{\infty}^{(1-\frac{\epsilon}{2}) j_s}.$$

 Now we consider the case $j_s<i<j_{s+1}$. Since $A_n(T^{j_i}x), \cdots, A_n(T^{j_{s+1}}x)$ is $\l_{\infty}-$hyperbolic, from the definition \ref{muhyper}, it holds that $A_n(T^{j_i}x), \cdots, A_n(T^{i}x)$ is also $\l_{\infty}-$hyperbolic(without loss of generality, we assume $i-j_s\ge 10$). Then similar to the argument above, we have
 $$
 \|A_n^{i}(x)\|\ge  \l_{\infty}^{(1-\frac{\epsilon}{2}) i}\ge \l^{(1-2\epsilon) i},\quad 10\le i\le q_n.$$

 This concludes our proof.
\hfill{} \qedbox

\vskip 0.2cm

From Proposition \ref{P3}, we have that  $\frac{1}{j}\log
\|A_{n}^{j}(x)\|>(1-2\epsilon)\log \lambda$ for each nonresonant
point and $1\le j\le q_n$ with $n\ge N$. Since the measure of the nonresonant
point set is not less than $2\pi(1-\frac{\e}{2\pi}) $, choose $n>N_0$, $j=N_0$ and this
concludes the proof of Theorem \ref{lowerbound}.
\end{proof}\hfill{} \qedbox

\section{The construction of ${\tilde A}_{n}(x)$}
Recall that  $\omega$ is bounded type, i.e.,  $ q_{n+1}<M q_n$ for some $M\ge\frac{\sqrt 5+1}2$. In this section, we will prove the following:
 \begin{Theorem}\label{upperbound}
 There exists  a sequence of cocycles  $\tilde{A}_n(x)$ such that ${\tilde A}_{n}(x)\rightarrow D_l(x)$ in ${\cal C}^l$-topology. Moreover,  the Lyapunov exponent of $\tilde{A}_n(x)$ is less than $(1-\delta)\log\lambda$ for any large $n\gg \l$.
 \end{Theorem}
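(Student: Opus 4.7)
The plan is to obtain $\tilde A_n$ from $A_n$ by a single-step $C^l$-small rotational perturbation on $I_n$ that forces \emph{exact} alignment of the forward and backward stable directions on $\frac{I_n}{10}$, rather than the $O(\phi_0)$ residual twist that $A_n$ has. Set $\tilde A_n(x)=A_n(x)R_{\tilde e_n(x)}$, equivalently $\tilde\phi_n=\phi_n+\tilde e_n$, where $\tilde e_n\in C^l(\SS^1,\RR)$ is supported in $I_n$, equals $\phi_0(x)$ on $\frac{I_n}{10}$, and is extended to $I_n\setminus\frac{I_n}{10}$ by polynomials of order $2l+1$ matching derivatives up to order $l$ at the endpoints, exactly as $h_n^{\pm}$ in the construction of $e_n$. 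Since $|\phi_0(x)|\le q_n^{-2(l+1)}$ on $I_n$, a Cramer's-rule estimate in the spirit of Lemma \ref{boundforen} gives $|\tilde e_n|_{C^l}=O(q_n^{-2})$, so $\tilde A_n\to D_l$ in $C^l$ along with $A_n$.

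For $x\in I_n$ the orbit $Tx,\ldots,T^{r_n^+(x)-1}x$ avoids $I_n$, so by a literal repetition of Lemma \ref{rotationn}, $\tilde A_n^{r_n^+(x)}(x)=A_n^{r_n^+(x)}(x)R_{\tilde e_n(x)}$ and $\tilde A_n^{-r_n^-(x)}(x)=R_{-\tilde e_n(T^{-r_n^-(x)}x)}A_n^{-r_n^-(x)}(x)$. By (\ref{factonrotation}) one finds $\tilde s_n(x)=s_n(x)-\tilde e_n(x)$ and $\tilde s'_n(x)=s'_n(x)$, so on $\frac{I_n}{10}$
\begin{equation*}
\tilde s_n(x)-\tilde s'_n(x)=(s_n-s'_n)(x)-\phi_0(x)=0
\end{equation*}
by property $(1)_n$ of Proposition \ref{P2}. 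Passing to the induced return cocycle $B(y):=\tilde A_n^{r_n^+(y)}(y)$ on $I_n$ and writing $y_-$ for the induced predecessor of $y\in\frac{I_n}{10}$ (so $r_n^-(y)=r_n^+(y_-)$ and $\tilde A_n^{-r_n^-(y)}(y)=B(y_-)^{-1}$), the alignment reads $s(B(y))=u(B(y_-))$: the contracting direction of $B(y)$ coincides with the image of the expanding direction of $B(y_-)$. An $SL(2,\RR)$ singular-value-decomposition computation for the two-block product then gives $\log\|B(y)B(y_-)\|\le |r_n^+(y)-r_n^+(y_-)|\log\lambda+O(1)$, versus the generic $\asymp(r_n^+(y)+r_n^+(y_-))\log\lambda$; each cancellation pair thus loses at least $2\min(r_n^+(y),r_n^+(y_-))\log\lambda$ of log-norm.

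The induced system $(I_n,T^{r_n^+})$ is uniquely ergodic with respect to Lebesgue measure on $I_n$ and $|\frac{I_n}{10}|/|I_n|=\frac{1}{10}$, so a typical induced orbit has a fraction at least $\frac{1}{10}$ of its successive pairs exhibiting the cancellation above. Summing pair log-norms along the induced orbit yields $L_{\rm ind}\le(1-c)\bar r\log\lambda$ for a definite constant $c=c(M)>0$, where $\bar r$ is the mean return time to $I_n$, and the standard ergodic-theoretic identity $L(\tilde A_n)=L_{\rm ind}/\bar r$ then gives $L(\tilde A_n)\le(1-c)\log\lambda$. Bookkeeping the numerology carefully against $M$---in particular ruling out that the cancellations are reabsorbed by expansion in adjoining uncancelled blocks, and invoking the bounded-type hypothesis to keep $r_n^\pm(y)$ comparable---sharpens $c$ to the asserted $\delta\ge M^{-20}/4$.

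\emph{Main obstacle.} The delicate step is the quantitative propagation of pairwise cancellation into a Lyapunov deficit of definite size, uniformly in $n$. The bounded-type hypothesis $q_{n+1}<Mq_n$ is indispensable both for keeping successive return times comparable (so that consecutive cancellations neither amplify nor wash each other out) and for giving a lower bound on the ergodic frequency of visits to $\frac{I_n}{10}$ that is uniform in $n$. The exponent $20$ in $\delta=M^{-20}/4$ reflects the combinatorics of this inductive SVD-style bookkeeping across $M$-adic levels.
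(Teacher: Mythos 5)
Your proposal follows essentially the same route as the paper: the same perturbation $\tilde e_n$ (equal to $s_n-s'_n=\phi_0$ on $\frac{I_n}{10}$, polynomially extended, with the same Cramer's-rule $O(q_n^{-2})$ bound), the same forced alignment $\tilde s_n=\tilde s'_n$ via the rotation identity, the same two-block cancellation estimate (the paper's Lemma \ref{cancellation}), and the same use of the bounded-type condition to make consecutive return times comparable (the paper's Lemmas \ref{returnI}, \ref{returnI+pi} and Corollary \ref{ratio}) so that the pairwise loss becomes a definite fraction $d_3^2=\frac14 M^{-2(k_1+1)}$ of the total time. The only blemishes are notational/expository: the displayed alignment should read $s(B(y))\parallel B(y_-)\bigl(u(B(y_-))\bigr)$ (i.e.\ $s(B(y))=s(B(y_-)^{-1})$) rather than $s(B(y))=u(B(y_-))$, and the final ``bookkeeping'' you defer is exactly the paper's deterministic estimate $\|\tilde A_n(T^{n_{j+1}}x)\cdots\tilde A_n(T^{n_{j-1}}x)\|\le\l^{(n_{j+1}-n_j)(1-d_3^2)}$ between consecutive returns to $\frac{I_n}{10}$.
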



To prove Theorem \ref{upperbound}, we need the following
proposition:
 \begin{Proposition}\label{tildeP2} There exist $\tilde{A}_n$ with the following properties:

1. $\tilde{A}_n$ is of the form $\Lambda R_{\frac{\pi}{2}-\tilde{\phi}_n(x)}$ with
\begin{equation}\label{tildephin}|\tilde{\phi}_n(x)-\phi_{n}(x)|_{C^{l}}=O({q_n^{-2}}).\end{equation}

2. For each $x\in I_n$,  $\tilde{A}_{n}(x), \tilde{A}_n(Tx),\cdots,
\tilde{A}_n(T^{r_{n}^+(x)-1}x)$ is $\lambda_n$-hyperbolic.

3. Let $\tilde{s}_n(x)=\overline{s(\tilde{A}_n^{ {r_n^+}}(x))}$,
$\tilde{s}'_n(x)=\overline{s(\tilde{A}_n^{- {r_n^-}}(x))}$. Then we
have
$$
\tilde{s}_{n}(x)=\tilde{s}'_{n}(x) \quad {\rm\ on}\ \frac{I_n}{10}.
$$
\end{Proposition}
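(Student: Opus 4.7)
The plan is to obtain $\tilde{A}_n$ as a small perturbation of $A_n$ (from Proposition \ref{P2}) whose sole purpose is to cancel the $\phi_0(x)$ residue in $s_n(x)-s'_n(x)$ on $\frac{I_n}{10}$. Concretely, I would define a $2\pi$-periodic $C^l$ function $\tilde{e}_n(x)$ supported in $I_n$ that equals $\phi_0(x)=\mathrm{sgn}(x-c_i)|x-c_i|^{l+1}$ on $\frac{I_{n,i}}{10}$, and that is interpolated to $0$ on each component of $I_n\setminus\frac{I_n}{10}$ by a degree-$(2l+1)$ polynomial of the form $\tilde{h}^{\pm}_n(x)=\sum_{k=0}^l \tau_k (x-(c_i\pm q_n^{-2}))^{l+1+k}$ matching $\phi_0$ and its first $l$ derivatives at the inner endpoints $c_i\pm \frac{1}{10q_n^2}$ (the outer-endpoint conditions are automatic). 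Setting $\tilde{\phi}_n=\phi_n+\tilde{e}_n$ and $\tilde{A}_n=\Lambda R_{\tilde{\phi}_n}$ then yields the desired cocycle.

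For Property 1, on $\frac{I_n}{10}$ one has $|x-c_i|\le (10q_n^2)^{-1}$, so the $k$-th derivative of $\phi_0$ is bounded by $C(q_n^{-2})^{l+1-k}$, giving $|\phi_0|_{C^l}=O(q_n^{-2})$. On $I_n\setminus\frac{I_n}{10}$, the boundary data at the inner endpoint is of order $q_n^{-2(l+1-j)}$, and a Cramer's rule argument of the same flavor as in Lemma \ref{boundforen} (now with $z_0\sim q_n^{-2}$ and scaled boundary data) yields $\tau_k=O(q_n^{2k})$ and consequently $|\tilde{h}^\pm_n|_{C^l}=O(q_n^{-2})$, so $|\tilde{e}_n|_{C^l}=O(q_n^{-2})$ on all of $\SS^1$.

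Properties 2 and 3 then follow from the rotation-absorption mechanism already used in Lemmas \ref{rotation}, \ref{rotationn} and formula (\ref{factonrotation}). Since $\tilde{e}_n$ is supported in $I_n$ and, by definition of $r_n^+$, the orbit $\{T^ix: 1\le i\le r_n^+(x)-1\}$ of any $x\in I_n$ avoids $I_n$, one obtains the identity $\tilde{A}_n^{r_n^+(x)}(x)=A_n^{r_n^+(x)}(x)\cdot R_{\tilde{e}_n(x)}$. Because rotations preserve operator norms, $\|\tilde{A}_n^j(x)\|=\|A_n^j(x)\|$ for all $1\le j\le r_n^+(x)$, which transfers $\lambda_n$-hyperbolicity from $A_n$ to $\tilde{A}_n$ (and the inverse block is handled symmetrically), giving Property 2. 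Applying (\ref{factonrotation}) then yields $\tilde{s}_n=s_n-\tilde{e}_n$ and $\tilde{s}'_n=s'_n$, so on $\frac{I_n}{10}$ we obtain
$$\tilde{s}_n-\tilde{s}'_n=(s_n-s'_n)-\tilde{e}_n=\phi_0-\phi_0=0,$$
using Property 3 of Proposition \ref{P2} and the definition of $\tilde{e}_n$.

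The main technical point is the $C^l$ control of $\tilde{h}^{\pm}_n$: it forces one to track carefully how the $O(q_n^{-2(l+1-j)})$ boundary data interacts with the $O(q_n^{-2})$-length interval under Cramer's rule. Fortunately this is essentially the same scaling computation as in the proof of Lemma \ref{boundforen}, and no genuinely new ideas seem to be needed; the key observation is simply that the $q_n^{-2(l+1)}$ smallness of $\phi_0$ on $\frac{I_n}{10}$ together with the $q_n^{2l}$ blowup factor from Cramer's rule combine to give exactly the bound $O(q_n^{-2})$ claimed in Property 1.
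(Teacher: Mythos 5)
Your proposal is correct and follows essentially the same route as the paper: define a bump $\tilde e_n$ supported in $I_n$ equal to $s_n-s_n'$ on $\frac{I_n}{10}$, interpolate by degree-$(2l+1)$ polynomials, bound it via the same Cramer's-rule scaling as in Lemma \ref{boundforen}, and use the rotation-absorption identity $\tilde A_n^{r_n^+}(x)=A_n^{r_n^+}(x)R_{\tilde e_n(x)}$ together with (\ref{factonrotation}). The only cosmetic difference is that you prescribe $\tilde e_n=\phi_0$ on $\frac{I_n}{10}$ while the paper prescribes $s_n-s_n'$, but these coincide there by $(1)_n$ of Proposition \ref{P2}, so the constructions agree.
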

\vskip 0.3cm

\Proof
Let $\tilde{e}_{n}(x)\in {\cal C}^l$ be a $2\pi$-periodic function
such that
$$
\tilde{e}_{n}(x)=\left\{\begin{array}{ll}
(s_{n}-s_{n}')(x) &\quad\ x\in \frac{I_{n}}{10}\\
\\
\tilde{h}^{\pm}_{n}(x), &\quad\ x\in I_{n}\backslash \frac{I_{n}}{10}\\
\\
0,& \mathbb{S}^1\backslash I_{n},
\end{array}\right.
$$
where $\tilde{h}^{\pm}_{n}(x)$ is a polynomials of degree $2l+1$
restricted on each interval of $I_{n}\backslash \frac{I_{n}}{10}$
and satisfies for $i=1, 2$ and $0\le j\le l$
$$\begin{array}{cc}
&\frac{d^j\tilde{h}^{\pm}_{n}}{dx^j}(c_{i}\pm \frac{1}{10q_{n}^2})=\frac{d^j(s_{n}-s_{n}')}{dx^j}(c_{i}\pm \frac{1}{10q_{n}^2})\\
\\
& \frac{d^j\tilde{h}^{\pm}_{n}}{dx^j}(c_{i}\pm
\frac{1}{q_{n}^2})=0.\end{array}
c$$
From $(1)_n$ in Proposition 3.1, it holds for $0\le j\le l$ that
$|(s_{n}-s_{n}')(x)|_{{\cal C}^j}=O({q_n^{-2(l+1-j)}})$. Hence from
Cramer's rule we have that $|\tilde{h}^{\pm}_{n}(x)|_{{\cal
C}^l}=O({q_n^{-2}})$. Consequently, $|\tilde{e}_{n}(x)|_{{\cal
C}^l}=O({q_n^{-2}})$.

Define $\tilde{\phi}_n(x)=\phi_n(x)+\tilde{e}_{n}(x)$ and $\tilde{A}_n(x)=\Lambda \cdot R_{\frac{\pi}{2}-\tilde{\phi}_n(x)}$.
Thus conclusion 1 is proved,
which together with the fact that ${A}_{n}(x)\rightarrow D_l(x)$ in
${\cal C}^{l}$-topology implies that  ${\tilde A}_{n}(x)\rightarrow
D_l(x)$ in ${\cal C}^{l}$-topology.

 Since for each $x\in I_n$,  ${A}_{n}(x), {A}_n(Tx),\cdots, {A}_n(T^{ {r_n^+(x)}-1}x)$ is
$\lambda_n$-hyperbolic and $\tilde \phi_n(x)= \phi_n(x)$ on $\S^1\backslash I_n$, we see that $\tilde{A}_{n}(x), \tilde {A}_n(Tx),\cdots, \tilde{A}_n(T^{ {r_n^+(x)}-1}x)$ .  Thus $\tilde{s}_{n}(x)=\overline{s({\tilde A}_n^{
{r_n^+}}(x))}$ and $\tilde{s}'_{n}(x)=\overline{s({\tilde A}_n^{-
{r_n^-}}(x))}$ are well-defined. Moreover, similar to Lemma \ref{tophi}, it holds that
$\tilde{s}_{n}(x)-\tilde{s}'_{n}(x)={s}_{n}(x)-{s}'_{n}(x)-\tilde{e}_{n}(x)$.

Thus from the definition of $\tilde{e}_{n}(x)$, it holds that
\beq\label{s=s'} \tilde{s}_{n}(x)=\tilde{s}'_{n}(x), \quad x\in
\frac{I_n}{10}. \eeq This ends the proof of the proposition.\hfill{}
\qedbox \vskip 0.4cm
The following observations are useful later for the estimate of the upper bound of the Lyapunov exponent for ${\tilde A}_{n}(x)$.
\begin{Lemma}\label{cancellation}
Suppose $A$ and $B$ are two hyperbolic matrices such that $\|A\|=\l_1^m$ and $\|B\|=\l_2^n$ with $m, n>0$ and $\l_1,  \l_2\gg 1$. If $A(s(A))\parallel u(B)$, then
$\|BA\|\le 2\max\{\l_1^m\cdot \l_2^{-n},\ \l_2^n\cdot \l_1^{-m}\}$.
\end{Lemma}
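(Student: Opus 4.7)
My plan is to reduce this lemma to a direct calculation using the singular value decomposition of $2\times 2$ matrices. Let $\hat{u}(A), \hat{s}(A)$ be unit vectors representing the most expanded and most contracted directions of $A$; since $A\in SL(2,\RR)$ has singular values $\|A\|$ and $\|A\|^{-1}$, the pair $\{\hat{u}(A),\hat{s}(A)\}$ is orthonormal, and so is $\{A\hat{u}(A), A\hat{s}(A)\}$ after rescaling by $\|A\|^{\pm 1}$. The same holds for $B$.

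The geometric heart of the lemma is that the single projective assumption $A(s(A)) \parallel u(B)$ actually forces a full orthogonal alignment of the intermediate frames. Indeed, $A\hat{u}(A) \perp A\hat{s}(A)$ together with $A\hat{s}(A) \parallel \hat{u}(B)$ and $\hat{u}(B) \perp \hat{s}(B)$ imply $A\hat{u}(A) \parallel \hat{s}(B)$. Hence, up to sign,
\[
A\hat{s}(A) = \pm\|A\|^{-1}\hat{u}(B), \qquad A\hat{u}(A) = \pm\|A\|\hat{s}(B).
\]
In other words, $A$ maps its own expanded direction straight into the contracted direction of $B$, and vice versa, which is the source of the ``cancellation.''

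Next I would decompose an arbitrary unit vector $v = \alpha\hat{u}(A) + \beta\hat{s}(A)$ with $\alpha^{2}+\beta^{2}=1$, push it through $A$ and then $B$ using the identities above and the orthogonality of $B\hat{u}(B)$ and $B\hat{s}(B)$ (which have lengths $\|B\|$ and $\|B\|^{-1}$ respectively). A Pythagorean computation yields
\[
\|BAv\|^{2} \;=\; \alpha^{2}\|A\|^{2}\|B\|^{-2} \;+\; \beta^{2}\|A\|^{-2}\|B\|^{2},
\]
and maximizing over $\alpha^{2}+\beta^{2}=1$ gives $\|BA\| = \max(\l_1^{m}\l_2^{-n},\, \l_2^{n}\l_1^{-m})$, which is even stronger than the claimed bound; the factor $2$ in the statement simply provides slack, presumably to absorb approximate rather than exact parallelism in later applications.

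I do not foresee any genuine obstacle: once one notices the perpendicularity of the image frames, the whole argument is elementary $2\times 2$ linear algebra. The only subtlety worth double-checking is that the relation $A(s(A)) \parallel u(B)$ is projective, so the signs above can be absorbed without affecting the Pythagorean step, and that the hyperbolicity hypothesis ($\|A\|,\|B\|\gg 1$) is used only to guarantee that $s(\cdot)$ and $u(\cdot)$ are unambiguously defined.
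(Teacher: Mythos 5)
Your proposal is correct and follows essentially the same route as the paper's proof: decompose $v$ along $s(A)\oplus u(A)$, observe that $A(s(A))\parallel u(B)$ together with the orthogonality of the image frame forces $A(u(A))\parallel s(B)$, and then push through $B$. The only difference is at the last step, where you exploit the orthogonality of $B\hat{u}(B)$ and $B\hat{s}(B)$ via Pythagoras while the paper uses the triangle inequality; this lets you drop the factor $2$ and obtain the exact value $\|BA\|=\max\{\l_1^{m}\l_2^{-n},\ \l_2^{n}\l_1^{-m}\}$.
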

\begin{Proof}
For any hyperbolic matrix $A$, it holds that $s(A)\perp u(A)$ and $A(s(A))\perp A(u(A))$. From $A(s(A))\parallel u(B)$, we have $A(u(A))\parallel s(B)$.
For any vector $v\in \mathbb{R}^2$, let $v=v_1\oplus v_2$ with respect to $s(A)\oplus u(A)$. Then
$$
Av=Av_1\oplus Av_2=(|Av_1|\cdot u(B))\oplus (|Av_2|\cdot s(B)).
$$
Consequently
$$
BAv=(|Av_1|\cdot B(u(B)))\oplus (|Av_2|\cdot B(s(B))).
$$
Thus we have
$$
|BAv|\le \l_2^{n}\cdot \l_1^{-m}|v_1|+\l_1^{m}\cdot \l_2^{-n}|v_2|\le  2\max\{\l_1^m\cdot \l_2^{-n},\ \l_2^n\cdot \l_1^{-m}\}|v|.
$$
This concludes the proof of this lemma.
\qedbox
\end{Proof}
\vskip 0.5cm

\begin{Lemma}\label{returnI}
For any interval $I\in \mathbb{S}^1$ with $0<|I|<\pi/4$, let $r=\min_{x\in
I} \min \{i>0| T^i x\ (mod\ 2\pi) \in I\}$ and $\hat{r}=\max_{x\in
\frac{I}{10}} \min \{i>0| T^i x\ (mod\ 2\pi) \in \frac{I}{10}\}$. Then $\delta \le
\frac{r}{\hat{r}}$.
\end{Lemma}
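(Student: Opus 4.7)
The plan is to estimate $r$ from below and $\hat r$ from above by continued-fraction arithmetic, then use the bounded-type hypothesis $q_{n+1}<Mq_n$ to compare the two bounds. Throughout, write $\epsilon=|I|/(2\pi)$, let $(q_n)$ denote the continued-fraction denominators of $\omega$, and let $\|\cdot\|$ denote distance to the nearest integer.

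First I would observe that the condition $T^iI\cap I\neq\emptyset$ is equivalent to $\|i\omega\|<\epsilon$, so $r$ is the smallest positive integer with $\|r\omega\|<\epsilon$. The key Diophantine input is the lower bound $\|i\omega\|\ge 1/((M+1)i)$ for every $i\ge 1$: indeed, choosing $n$ with $q_n\le i<q_{n+1}$, the best-approximation property gives $\|i\omega\|\ge\|q_n\omega\|\ge 1/(q_n+q_{n+1})$, and the bounded-type bound $q_{n+1}\le Mq_n$ then yields $\|i\omega\|\ge 1/((M+1)q_n)\ge 1/((M+1)i)$. Applied at $i=r$, this gives $r\ge 2\pi/((M+1)|I|)$.

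Next I would invoke the three-distance theorem of Steinhaus --- equivalently, the fact that the first-return map of an irrational rotation to any interval is an interval exchange with at most three subintervals --- to conclude that the first-return-time function on any interval $J$ takes at most three distinct values, each bounded above by $q_m+q_{m+1}$, where $m$ is the unique index satisfying $\|q_{m+1}\omega\|<|J|/(2\pi)\le\|q_m\omega\|$. Using $\|q_m\omega\|\le 1/q_{m+1}$ one gets $q_{m+1}\le 2\pi/|J|$, so the maximum first-return time to $J$ is at most $2q_{m+1}\le 4\pi/|J|$. Taking $J=\frac{I}{10}$ yields $\hat r\le 40\pi/|I|$.

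Combining the two estimates gives $r/\hat r\ge 1/(20(M+1))$. Since $M>(\sqrt 5+1)/2$ we have $M^{20}>15000$, while $5(M+1)$ grows only linearly, so the elementary comparison $20(M+1)\le 4M^{20}$ holds; this gives $\delta=\frac14 M^{-20}\le r/\hat r$, as required. The main obstacle will be articulating the three-distance/interval-exchange bound on the maximum first-return time cleanly; apart from this the argument reduces to standard continued-fraction estimates and a numerical check.
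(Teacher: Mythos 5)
Your overall route is sound and genuinely different from the paper's: the paper takes $m=\min\{k:\ T^{q_k}I\cap I\neq\emptyset\}$, so that $r\ge q_m$, bounds the return time to $\frac{I}{10}$ by a denominator $q_{m+k_0+1}$ where $k_0$ counts how many further continued-fraction steps are needed for $\|q_{m+k}\omega\|$ to drop below $\frac{|I|}{20\pi}$, and then shows $k_0<9$, obtaining $r/\hat r\ge M^{-(k_0+1)}$ directly as a ratio of denominators. You instead reduce both sides to bounds of order $1/|I|$: a Diophantine lower bound for $r$ and a three-distance upper bound for $\hat r$. Your lower bound $r\ge 2\pi/((M+1)|I|)$ is correct as derived.

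The gap is in the upper bound for $\hat r$: with $m$ defined by $\|q_{m+1}\omega\|<|J|/(2\pi)\le\|q_m\omega\|$, the maximal first-return time to $J$ is \emph{not} bounded by $q_m+q_{m+1}$; the correct (and sharp) bound from the three-gap/Slater theorem is $q_{m+1}+q_{m+2}$, one level deeper in the continued-fraction hierarchy. Concretely, for $\omega=\frac{\sqrt5-1}{2}$ one has $q_1=1$, $q_2=2$, $q_3=3$, $\|q_1\omega\|\approx0.382$, $\|q_2\omega\|\approx0.236$; an interval of normalized length $0.3$ gives $m=1$ and your claimed bound $q_1+q_2=3$, yet the point $0.1$ has orbit $0.1,\,0.718,\,0.336,\,0.954,\,0.572,\,0.190$ and first returns to $[0,0.3)$ only at time $5=q_2+q_3$. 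The correct covering argument is that the points $Tx,\dots,T^{q_{m+1}+q_{m+2}}x$ cut $\SS^1$ into arcs of normalized lengths $\|q_{m+1}\omega\|$ and $\|q_{m+2}\omega\|$, both smaller than $|J|/(2\pi)$. The correction costs a factor of order $M$, since $q_{m+2}\le Mq_{m+1}$: one gets $\hat r\le(1+M)q_{m+1}\le 20\pi(1+M)/|I|$ for $J=\frac{I}{10}$, hence $r/\hat r\ge\frac{1}{10(M+1)^2}$ rather than your $\frac{1}{20(M+1)}$. The conclusion survives because the margin in $\delta=\frac14M^{-20}$ is enormous: $10(M+1)^2\le 40M^2\le 4M^{20}$ since $M^{18}>\left(\frac{\sqrt5+1}{2}\right)^{18}>10$. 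So your proof is repairable with the corrected three-distance bound, but as written the key quantitative step fails. (You should also dispose of the degenerate case in which no index $m$ exists because $|J|/(2\pi)>\|q_0\omega\|$; there the return time is at most $q_0+q_1$ and the bound is trivial.)
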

\begin{Proof}
Without loss of generality, we can assume $I=[0, a]$. Let $m=\min
\{k| T^{q_{k}}I\bigcap I \not=\emptyset\}$. Since $T^{q_n}0,
T^{q_{n+1}}0$ are on the different side of $0$ and
$\lim_{n\rightarrow \infty}|T^{q_{n}}0|=0$, it follows that there is a $k_0>0$ such
that $|T^{q_{m+k_0+1}}0|<| T^{q_{m+k_0}}0|\le \frac 1{10} |I|$. It
follows that $I\subset T^{q_{m}}I\cup T^{q_{m+k_0}}\frac{I}{10}\cup
T^{q_{m+k_0+1}}\frac{I}{10}\ (\ {\rm mod}\ 1)$. Thus $$\frac{r}{\hat{r}}\ge
\frac{q_{m}}{q_{m+k_0+1}}\ge M^{-(k_0+1)}.$$ Since $M> \frac{\sqrt
5+1} 2$, one can see that $k_0< 9$. It follows that $\delta \le
\frac{r}{\hat{r}}$. \hfill{} \qedbox
\end{Proof}

\begin{Lemma}\label{returnI+pi}
For any interval $I\in \mathbb{S}^1$ with $0<|I|<\pi/4$, let
$r_1=\max_{x\in I} \min \{i>0| T^i x\ (mod\ 2\pi)    \in I\}$ and
$r_2=\min_{x\in I} \min \{i>0| T^i x\ (mod\ 2\pi) \in I+\pi\}$. Then
there is positive integer $k_1<18$ such that $ {r_1}\le k_1{{r_2}}$.
\end{Lemma}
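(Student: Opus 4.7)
My plan is to exploit the minimizer $x_0\in I$ for $r_2$, for which $T^{r_2}x_0\in I+\pi$. Since the rotation is rigid, the displacement $v=r_2\cdot 2\pi\omega\pmod{2\pi}$ satisfies $|v-\pi|\le |I|$, and consequently $T^{2r_2}$ is a rotation by $2v\pmod{2\pi}$, which lies in $[-2|I|,2|I|]$. In other words, $|T^{2r_2}(0)|\le 2|I|$, so $2r_2$ is an approximate-return index of the base point with tolerance $2|I|$.

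With this observation in hand, I would mimic the covering argument used in the proof of Lemma~\ref{returnI}. Let $m=\min\{k:|T^{q_k}(0)|<|I|\}$ be the usual closest-return denominator for $I$; bounded type combined with the estimate $|T^{2r_2}(0)|\le 2|I|$ yields $q_m\lesssim 2r_2$. Next pick the smallest integer $k_0$ with $|T^{q_{m+k_0}}(0)|\le |I|/10$. The hypothesis $M>(\sqrt5+1)/2$, together with the universal growth $q_{n+2}>2q_n$, is what pins down $k_0<9$ in Lemma~\ref{returnI}, and the same bound applies here. Using $2r_2$ as an auxiliary ``almost-identity'' translation alongside $T^{q_{m+k_0}}$ and $T^{q_{m+k_0+1}}$, one then assembles a covering of $I$ by a small fixed number of translates of sub-intervals $I'\subset I$ with displacement times bounded by $\max(2r_2,\,q_{m+k_0+1})$, which forces $r_1\le k_1 r_2$ for some explicit positive integer $k_1$.

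The main obstacle is obtaining $k_1<18$ rather than merely ``bounded''. The crude estimate is $r_1\le 2M^{k_0+1}r_2$, which depends on $M$ and can exceed $18$ when $M$ is large; the improvement must come from exploiting that larger $M$ forces $k_0$ to be smaller, so that the combined factor in the covering stabilizes. A further subtlety is that since $|T^{2r_2}(0)|$ can be as large as $2|I|$, the translate $T^{2r_2}I$ need not overlap $I$ the way $T^{q_m}I$ does in Lemma~\ref{returnI}; handling this may require an intermediate reduction, for instance by first passing to $T^{-r_2}(I+\pi)$, which sits inside $I$ up to a shift of size $\le|I|$, and then applying the standard covering to the shifted interval. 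The expected endpoint $k_1<18\approx 2\cdot 9$ strongly suggests that the factor of $2$ from the double passage through $I+\pi$ multiplies with the $k_0<9$ of Lemma~\ref{returnI} to produce the stated bound.
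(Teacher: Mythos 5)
Your first paragraph captures exactly the mechanism the paper uses: from a minimizer for $r_2$ one sees that $T^{r_2}$ rotates by an angle within $|I|$ of $\pi$, hence $T^{2r_2}$ rotates by at most $2|I|$, and the paper then invokes the $k_0<9$ of Lemma \ref{returnI} and sets $k_1=2k_0$. So in outline you are on the paper's route; for the record, the paper's entire proof is three sentences long and asserts, with no more justification than you give, that some $k_0<9$ satisfies $T^{k_0 r_2}\pi\in[\pi,\pi+\frac a2]$.

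However, what you call the ``main obstacle'' is a genuine gap, not a loose end, and your covering scheme does not close it. The quantitative link between $r_2$ and the closest-return denominator of $I$ unavoidably costs powers of $M$: from $\|2r_2\omega\|\le |I|/\pi$ one only gets $2r_2\ge q_{n'}$ where $n'$ is minimal with $\|q_{n'}\omega\|\le |I|/\pi$, while the first return to $I$ is at least $q_m$ with $m$ minimal such that $\|q_m\omega\|\le |I|/(2\pi)$; since $\|q_k\omega\|$ may drop by a factor of order $M$ at each step, the best general comparison is $q_m\le M^2 q_{n'}$, giving only $r_1\le 2(1+M)M^2\,r_2$. No refinement of the covering removes the $M$-dependence, because the bound is sharp in order of magnitude: take $\omega=[0;2,a_2,1,1,\dots]$ with $a_2\approx M$, so that $|\omega-\tfrac12|\approx \tfrac1{2q_2}$ with $q_2=2a_2+1$, and choose $|I|/(2\pi)$ just above $|\omega-\tfrac12|$. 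Then $r_2=1$ (a single step already carries a point of $I$ into $I+\pi$), while $\|j\omega\|>|I|/(2\pi)$ for all $0<j<q_2$, so every first return to $I$ takes at least $q_2\approx 2M$ steps and $r_1/r_2\gtrsim 2M$, which exceeds $18$ once $M\ge 10$. Hence a universal $k_1<18$ cannot be extracted for arbitrary intervals and arbitrary bounded-type $\omega$; any correct version must let $k_1$ depend on $M$. Your instinct that $18=2\times 9$ should arise from doubling the $k_0<9$ of Lemma \ref{returnI} is indeed the paper's intent, but the step ``$q_m\lesssim 2r_2$'' in your sketch is precisely where the argument fails, and the paper's own one-line assertion glosses over the same point.
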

\Proof Without loss of generality, we assume $I=[0, a]$. From the
proof of Lemma \ref{returnI}, we have that there is a positive
integer $k_0<9$ such that $T^{k_0 r_2}\pi\in [\pi,
\pi+\frac{a}{2}]$, which implies that $T^{2k_0 r_2}\pi (mod\ 2\pi)\
\in [0, a]$. The proof is completed by setting $k_1=2k_0$. \hfill{} \qedbox

 \vskip 0.3cm
 From Lemmas \ref{returnI} and \ref{returnI+pi}, we can easily obtain
 the following:
\begin{Corollary}\label{ratio}
Let $\min r_n(x)=\min_{x\in I_n} \min \{i>0| T^i x\ (mod\ 2\pi) \in
I_n\}$ and $\max {r_n}(x)=\max_{x\in \frac 1{10} I_n} \min \{i>0| T^i
x\ (mod\ 2\pi) \in \frac 1 {10}{I}_n\}$. Then $M^{-k_1-1} \le
\frac{\min r_n(x)}{\max {r_n}(x)}\le 1$.
\end{Corollary}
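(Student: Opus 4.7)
The plan is to combine Lemma \ref{returnI} and Lemma \ref{returnI+pi} while exploiting the rotational symmetry $I_{n,2}=I_{n,1}+\pi$ of the set $I_n$.

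For the easy upper bound $\min r_n(x)/\max r_n(x)\le 1$: since $\frac{1}{10}I_n\subset I_n$, for each $x\in \frac{1}{10}I_n$ the first return time to $\frac{1}{10}I_n$ is at least the first return time to $I_n$. Taking the max over such $x$ gives $\max r_n(x)\ge \min_{x\in I_n/10}\min\{i>0\mid T^ix\in I_n\}\ge \min r_n(x)$.

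For the lower bound, write $I_n=I_{n,1}\cup I_{n,2}$ with $I_{n,2}=I_{n,1}+\pi$, and introduce
$$\alpha=\min_{x\in I_{n,1}}\min\{i>0\mid T^ix\in I_{n,1}\}, \qquad \beta=\min_{x\in I_{n,1}}\min\{i>0\mid T^ix\in I_{n,2}\}.$$
By the rigidity of the rotation $T$, starts from $I_{n,2}$ mirror starts from $I_{n,1}$, so $\min r_n(x)=\min\{\alpha,\beta\}$. Applying Lemma \ref{returnI} to the single interval $I_{n,1}$ gives $\hat{\alpha}:=\max_{x\in I_{n,1}/10}\min\{i>0\mid T^ix\in I_{n,1}/10\}\le \alpha\cdot M^{k_0+1}$ with $k_0<9$ (extracted from that lemma's proof). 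Since allowing the second target $I_{n,2}/10$ can only shorten first-return times to the union $I_n/10$, the quantity $\max r_n(x)$ is bounded by $\hat{\alpha}$; by symmetry the same bound holds when starting in $I_{n,2}/10$. Hence $\max r_n(x)\le \alpha\cdot M^{k_0+1}$.

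Now split into cases. If $\alpha\le\beta$, then $\min r_n(x)=\alpha$ and the ratio is $\ge M^{-(k_0+1)}$. If $\beta<\alpha$, apply Lemma \ref{returnI+pi} to $I_{n,1}$: with $r_1=\max_{x\in I_{n,1}}\min\{i>0\mid T^ix\in I_{n,1}\}\ge \alpha$ and $r_2=\beta$, we obtain $\alpha\le k_1\beta$, i.e.\ $\beta\ge \alpha/k_1$, so the ratio is $\ge 1/(k_1 M^{k_0+1})$. Using $M>\frac{\sqrt5+1}{2}$ and $k_1=2k_0$, one checks $k_1\le M^{k_0}$, so in both cases the ratio is bounded below by $M^{-k_1-1}$. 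The main delicate point is the constant-tracking needed to absorb $M^{k_0+1}$ and $k_1$ into the claimed exponent $k_1+1$; otherwise the argument is a direct symmetry-plus-combination of the two preceding lemmas.
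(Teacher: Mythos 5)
Your overall strategy is exactly the one the paper intends: the paper gives no actual argument for Corollary \ref{ratio} beyond the phrase ``from Lemma \ref{returnI} and \ref{returnI+pi} we can easily obtain,'' and your decomposition --- use the symmetry $I_{n,2}=I_{n,1}+\pi$ (which commutes with $T$) to reduce to the single interval $I_{n,1}$, bound $\max r_n$ by the single-interval quantity $\hat\alpha$ via Lemma \ref{returnI}, and split into cases according to whether the first return to $I_n$ lands in the same or the opposite component, invoking Lemma \ref{returnI+pi} in the latter case --- is the natural and correct way to combine the two lemmas. The upper bound $\min r_n/\max r_n\le 1$ and the identity $\min r_n=\min\{\alpha,\beta\}$ are fine, as is the chain $\max r_n\le\hat\alpha\le\alpha M^{k_0+1}$ and the estimate $\beta\ge\alpha/k_1$ in the second case.

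The one genuine flaw is the closing assertion ``one checks $k_1\le M^{k_0}$.'' With $k_1=2k_0$ this reads $2k_0\le M^{k_0}$, and since the hypothesis only gives $M>\frac{\sqrt5+1}{2}\approx1.618$, this fails for small $k_0$: e.g.\ $M=1.7$, $k_0=1$ gives $M^{k_0}=1.7<2$, and indeed $2k_0>\bigl(\tfrac{\sqrt5+1}{2}\bigr)^{k_0}$ for all $k_0\le4$. So your Case 2 delivers the lower bound $(k_1M^{k_0+1})^{-1}$, which need not be $\ge M^{-k_1-1}$; the stated constant is not actually reached by your argument. This is a quantitative rather than structural defect: what the corollary is used for (the choice of $d_3$ in the proof of Theorem \ref{upperbound}) only requires \emph{some} uniform positive lower bound on the ratio, independent of $n$, and $(k_1M^{k_0+1})^{-1}\ge(18M^{10})^{-1}$ serves that purpose. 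If you want the literal constant $M^{-k_1-1}$ you would either have to sharpen the comparison (the paper's own bookkeeping here is loose --- note it also conflates $k_1<18$ with $2k_0$ and elsewhere uses $\delta=\frac14M^{-20}$ versus $\frac14M^{-2(k_1+1)}$) or simply restate the corollary with the constant your argument actually produces.
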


\vskip 0.2cm \noindent {\it Proof of Theorem \ref{upperbound}}\quad
Let $\cdots<n_{j-1}<n_j<n_{j+1}<\cdots$ be the returning times of
$x\in I_{n}/10$  to $I_{n}/10$. Moreover, we denote $n_{j+}$ be the first
returning time of $x\in  I_{n}$ to $ I_{n}$ after $n_j$. Similarly,
we denote by  $n_{j-}$ the last returning time of $x\in  I_{n}$ to $
I_{n}$ before $n_j$. Obviously, it holds that $n_{j-1}\le
n_{j-}<n_j$ and $n_j<n_{j+}\le n_{j+1}$.


 Since $T^{n_j}x\in [c_{1}-\frac{1}{2q_n^2}, c_{1}+\frac{1}{2q_n^2}]$, (\ref{s=s'}), Lemma \ref{cancellation} and Corollary \ref{ratio} are applicable. Set $d_3=\frac{1}{2}M^{-k_1-1}$. From the definition of ${\tilde A}_n$, we have $\|{\tilde A}_n(x)\|\le \l$ for each $x$. Consequently,
\beq\label{cancel}
\begin{array}{ll}
&\|{\tilde A}_n(T^{n_{j+}}x)\cdots {\tilde A}_n(T^{n_j}x)\cdots {\tilde A}_n(T^{n_{j-}}x)\|\\
\\
&\le 2\max\{\|{\tilde A}^{n_{j+}-n_j}_n(T^{n_j}x)\|\cdot\|{\tilde A}^{n_{j}-n_{j-}}_n(T^{n_{j-}}x)\|^{-1}, \|{\tilde A}^{n_{j+}-n_j}_n(T^{n_j}x)\|^{-1}\cdot \|{\tilde A}^{n_{j}-n_{j-}}_n(T^{n_{j-}}x)\|\}\\
\\
&\le 2\max\{\|{\tilde A}^{n_{j+}-n_j}_n(T^{n_j}x)\|, \|{\tilde A}^{n_{j}-n_{j-}}_n(T^{n_{j-}}x)\|\}\\
\\
&\le 2\l^{\max\{n_{j+}-n_{j}, n_{j}-n_{j-}\}}\le \l^{(1-d_3)(n_{j+}-n_{j-})},
\end{array}
\eeq
 which implies
 $$
 \|{\tilde A}_n(T^{n_{j+1}}x)\cdots {\tilde A}_n(T^{n_{j-1}}x)\|\le \l^{n_{j+1}-n_{j-1}-d_3(n_{j+}-n_{j-})}\le \l^{(n_{j+1}-n_j)(1-d_3^2)}.
 $$

Thus we have, for any $k$,
$$\|{\tilde A}_n(T^{n_{k}}x)\cdots {\tilde A}_n(x)\|< \l ^{\sum_{j=0}^k
(n_{j+1}-n_j)(1-d_3^2)}=\l^{n_{k}(1-d_3^2)}.
$$
 In other words, we have shown that the Lyapunov exponent of $\tilde{A}_n(x)$ will be less than
$(1-d^2_3)\log \lambda=(1-\frac{1}{4}M^{-2(k_1+1)})\log \l$. The
proof is finished since $k_1$ can be less than 18.
 \quad \qedbox
\vskip 0.3cm

\noindent {\it Proof of Theorem \ref{maintheorem} for finite order
differentiability}: The proof for the case $l=0$ can be found in
\cite{[Bo], [Bo1], BochiV,Furman,Knill,Th}. For $l>0$, from the
definition of $A_{n}(x)$ and $\tilde{A}_{n}(x)$, we have that in any
neighborhood of $D_l(x)$, there exists a cocycle $\tilde{A}_{n}(x)$
with the Lyapunov exponent less than $(1-\delta)\log\lambda$. From
Theorem \ref{lowerbound}, we know that $L(D_l(x))$ is larger than
$(1-4\epsilon)\log\lambda$. The discontinuity is obvious since
$\delta> 4\e$.
\hfill{} \qedbox \vskip 0.4cm

\section{The proof for the ${\cal C}^{\infty}$ case: a sketch}
In this section, we will prove Theorem \ref{maintheorem} for the ${\cal C}^{\infty}$ case.
The basic idea is  same as the finite smooth case. We will pay our attention to the difference between the two cases.

 In the following, we will first follow the steps in Section 3 to construct a sequence of $C^\infty$ cocycles  which are ${\cal C}^1$-convergent. Then we will prove that it actually converges in ${\cal C}^{\infty}$ topology.

Recall $\e= M^{-100}\ll \delta=\frac 14 M^{-20}$ defined in the introduction. Assume $\l\gg e^{q_N^{a+1}}\gg 1$ with $0<a<\frac {1}{10}$.
For
$n\ge N$, define $\l_{n+1}$ such that $\l_{n+1}^{q_{n+1}}=\l_{n}^{q_{n+1}}\cdot
e^{-(10q_{n+1}^2)^{a}}$ with $\l_N=\l^{1-\e}$.  From the definition of
$\l_n$, we have $\l_n^{q_n}\ge \l_{n-1}^{q_n}\cdot e^{-q_n^{2a}}\ge
\l_{n-2}^{q_n}\cdot
 e^{-q_n\cdot q_{n-1}^{2a-1}}\ge \cdots \ge \l^{q_n}\cdot\l_N^{-c_3\cdot q_n^{2a}}\ge \l_N^{(1-2\e)q_n}$ if $\l\gg 1$, where
 $c_3>0$ is a constant. It implies that $\l_n$ decrease to $\l_{\infty}\ge\l^{1-2\e}$.

\vskip 0.3cm
 \noindent {\it Construction of $A_N(x)$}\qquad Let $c_1, c_2\in \mathbb{S}^1$ with $c_1\in
[0, \pi)$, $c_2=c_1+\pi$ and $\delta_0$ a small positive number. We define $\phi_0$ on $\{x||x-c_1|\le \delta_0\ {\rm\ or\ } |x-c_2|\le \delta_0\}$ as follows.
\begin{equation}\label{inftyphi0}
\phi_{0}(x)=\left\{\begin{array}{ll} \phi_{01}(x),& \ \ |x-c_1|<\delta_0;\\
 -\phi_{02}(x)\ ({\rm\ or\ } \phi_{02}(x)),& \ \ |x-c_2|<\delta_0,
  \end{array}
  \right.
\end{equation}
where
\begin{equation}\label{inftyphi01}\phi_{0i}(x)={\rm sgn}(x-c_i)e^{-\frac{1}{|x-c_i|^a}}, \  \ i=1, 2.
\end{equation}

 Let $A(x)=\Lambda \cdot R_{\frac{\pi}{2}-\phi(x)}$, where $\phi$ is the lift of a ${\cal C}^{\infty}$ periodic function on $\mathbb{S}^1$ satisfying\\

\noindent(a) $$\phi(x)=\left\{\begin{array}{ll}\phi_{01}(x),&\quad |x-c_1|\le \delta_0;\\
-\phi_{02}(x)\
({\rm or}\ \pi+\phi_{02}(x),\ {\rm respectively}),&\quad |x-c_2|\le \delta_0.
\end{array}
\right.
$$
\\

\noindent (b)  $\forall |x-c_i|>\delta_0,\ i=1, 2$,
$|\phi(x)-k\pi|> e^{-\frac{1}{\delta_0^a}}$ for any $k\in \mathbb{Z}$.
\\

Using the same argument as that in  finite smooth case, we have that
\beq\label{sequence0'}
  A(x), \cdots, A(T^{r_N^+(x)-1}x) {\rm\ is}\ \l{-\rm\ hyperbolic\
 sequence.}
\eeq By Lemma \ref{Younglemma3},
 \beq\label{difference0'}
 |\bar s_N(x)-\bar s_N'(x)-\phi_0(x)|\le \|\phi\|\cdot \l^{-1}
 \eeq
for $x\in I_N$.

 Let $e_N(x)\in{\cal C}^{\infty}$ be  a $2\pi$-periodic function such that $e_N(x)=\phi_0(x)-(\bar s_N(x)-\bar s'_N(x))$ for $x\in I_N$.

\begin{Lemma}\label{existfn}
For any $n\ge N$, there exists $f_n
\in{\cal C}^{\infty}$ be a $2\pi$-periodic function such that
  \beq\label{piecewisefn}
  f_n(x):\ \ \left\{ \begin{array}{ll}=1,&\quad x\in\frac{I_n}{10},\\
  \\
  \in [0, 1],&\quad x\in I_n\backslash\frac{I_n}{10}\\
  \\
 =0,&\quad x\in \mathbb{S}^1\backslash {I_n}
 \end{array}
 \right.
\eeq
and
 \beq\label{dfdx}
 \left|\frac{d^rf_n(x)}{dx^r}\right|\le q_n^{3r},\quad 0\le r\le
[q_n^{\frac{1}{10}}].
\eeq
\end{Lemma}

The proof will be given in the Appendix.

\vskip 0.3cm

 Let  $\hat{e}_N(x)=e_N(x)\cdot f_N(x)$ and  $\phi_N(x)=\phi(x)+\hat{e}_N(x)$ for $x \in \mathbb{S}^1$.
Define $A_N(x)=\Lambda \cdot R_{\frac{\pi}{2}-\phi_N(x)}$. Obviously,
$A_N(x)=A(x)\cdot R_{-\hat{e}_N(x)}$. Then from
(\ref{sequence0'}), we obtain that, for any $x\in I_N$,  $A_N(x), \cdots,
A_N(T^{r_N^+(x)-1}x)$ is $\l$-hyperbolic sequence and
$(s_N-s'_N)(x)=(\bar s_N-\bar s'_N)(x)+\hat{e}_N(x)$, which implies
$s_N(x)-s'_N(x)=\phi_0(x)$ on $\frac{I_N}{10}$. (\ref{difference0'})
implies that $|\hat{e}_N(x)|_{{\cal C}^1}\le \|\phi\|_l\cdot  \l^{-(r^+_{N})^{\frac 14}}$ in
$I_N$. Thus we have $|s_N(x)-s'_N(x)|\ge \frac{1}{2}\cdot
 e^{-(10\cdot q_N^2)^{a}}$ on $I_N\backslash \frac{I_N}{10}$ if $\l>e^{(10\cdot q_N^2)^{a}}\cdot \|\phi\|$.\\



 Inductively, we assume that $A_{N}(x), \cdots, A_{n-1}(x)$ have been constructed such that for $\ N\le i\le n-1$,

$(a)_{i}$ $|\phi_{i}(x)-\phi_{i-1}(x)|_{{\cal C}^1}\le
\l_{i}^{-q_{i-1}^{\frac 1{10}}}$ for $x\in I_{i}$, $i>N$;

 $(b)_{i}$\ $A_{i}(x), \cdots, A_{i}(T^{r_{i}^+(x)-1}x)$ is $\l_{i}$-hyperbolic for $x\in I_{i}$;

 $(c)_{i}$ $s_{i}(x)-s'_{i}(x)=\phi_0(x)$ for $x\in \frac{I_{i}}{10}$ and $|s_{i}(x)-s'_{i}(x)|
 \ge \frac{1}{2}\cdot e^{-(10\cdot q_{i}^2)^{a}}$ for $x\in I_{i}\backslash \frac{I_i}{10}$.\\

\noindent
 {We now Construct of $A_{n}(x)$.}\quad
From $(b)_{n-1}$, we have $$
 \|A_{n-1}^{r^+_{n-1}(x)}(x)\|\cdot e^
 {-(10q_{n-1}^2)^{a}}\ge \l_{n-1}^{q_{n-1}(1-\e)}\cdot e^
 {-(10q_{n-1}^2)^{a}}\ge \l_{n}^{(1-\e)q_{n}},\quad x\in I_{n-1}.
$$
Combining this with $(c)_{n-1}$, we obtain that \beq
\label{sequencen'} A_{n-1}(x), \cdots, A_{n-1}(T^{r_{n}^+(x)-1}x) {\rm\ is\ }
\l_n{\rm -hyperbolic},\quad x\in I_n. \eeq Same as the finite smooth case (see (\ref{finiteensn})), from Lemma \ref{sn-s(n-1)} we have
\beq\label{differencen'}
|(s_{n-1}(x)-s'_{n-1}(x))-(\bar{s}_{n}(x)-\bar{s}'_{n}(x))|_{{\cal
C}^1}\le \l_{n-1}^{-q_{n-1}^{\frac 15}},\quad x\in I_n .\eeq

Define a $2\pi$-periodic function $e_{n}(x)\in{\cal C}^{\infty}$
such that
 \[e_{n}(x)=(s_{n-1}(x)-s'_{n-1}(x))-(\bar{s}_{n}(x)-\bar{s}'_{n}(x))
 \quad  x\in I_n.\]

  Define $\hat{e}_{n}(x)=e_{n}(x)\cdot f_{n}(x)$ where $f_n$ is defined in Lemma \ref{existfn},  $\phi_{n}(x)=
  \phi_{n-1}(x)+\hat{e}_{n}(x)$ and $A_{n}(x)=\Lambda
\cdot R_{\frac{\pi}{2}-\phi_{n}(x)}$. Obviously, $A_n(x)=A_{n-1}(x)\cdot
R_{-\hat{e}_n(x)}$. Then from (\ref{sequencen'}), we obtain that, for
any $x\in I_n$, $A_n(x), \cdots, A_n(T^{r_n^+(x)-1}x)$ is
$\l_n$-hyperbolic sequence  and
$(s_n-s'_n)(x)=(\bar{s}_{n}-\bar{s}'_{n})(x)+\hat{e}_n(x)$, which implies
$s_n(x)-s'_n(x)=\phi_0(x)$ on $\frac{I_n}{10}$. (\ref{dfdx}) and (\ref{differencen'})
imply $|\hat{e}_n(x)|_{{\cal C}^1}\le q_{n}^3\cdot\l_{n-1}^{-q_{n-1}^{\frac 14}},\ x\in
I_n$. Thus we have $|s_n(x)-s'_n(x)|\ge \frac{1}{2}\cdot
 e^{-(10\cdot q_n^2)^{a}}$ on $I_n\backslash \frac{I_n}{10}$.\\

In conclusion, we have

$(a)_{n}$ $|\phi_{n}(x)-\phi_{n-1}(x)|_{ {\cal C}^1}\le
\l_{n}^{-q_{n-1}^{\frac 1{10}}}$ for $x\in I_n$;

$(b)_{n}$\ $A_{n}(x), \cdots, A_{n}(T^{r_{n}^+(x)-1}x)$ is
$\l_{n}$-hyperbolic for $x\in I_n$;

 $(c)_{n}$ $|s_{n}(x)-s'_{n}(x)|=\phi_0(x)$ for $x\in \frac{I_{n}}{10}$ and $|s_{n}(x)-s'_{n}(x)|
 \ge \frac{1}{2}\cdot e^{-(10\cdot q_{n}^2)^{a}}$ for $x\in I_n\backslash \frac{I_n}{10}$.
 \vskip 0.3cm
 \noindent

 All the construction above is same as the finite smooth case. From $(a)_n$, one sees that $A_n$ converges to a cocycle $D_{\infty}(x)$. From $(b)_n$,
 one sees that the Lyapunov exponent of $D_{\infty}(x)$ has a lower bound
 $\log\l_{\infty}>(1-4\e)\log \l$. The additional work we should do is to prove that $A_n$ converge to a cocycle $D_\infty$ in any $C^k, k=1,2,\cdots$ topology.\\

By Lemma \ref{sn-s(n-1)} and Lemma \ref{boundforphin}, we have

\begin{Lemma}\label{newlemma3.2} Let $\l\gg N\gg1$. For $n>N$ and $0\le k\le [(r_{n-1}^+)^{\frac1{10}}]$, it holds that
$$
\left|\frac{d^k({\bar
s}_{n}-s_{n-1})}{dx^k}\right|+\left|\frac{d^k({\bar
s}'_{n}-s'_{n-1})}{dx^k}\right|\le 2\l^{-\frac13 (r_{n-1}^+)^{\frac23}}\cdot\|\phi_0\|_{k}.
$$
\end{Lemma}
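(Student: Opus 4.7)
My plan is to extend the proof of Lemma \ref{sn-s(n-1)} (for finite $l$) to the $C^\infty$ setting by combining its telescoping structure with the refined higher-derivative bounds of Lemma \ref{newlemma2.3}. Both $\bar s_n(x)$ and $s_{n-1}(x)$ are most-contracted directions of products of $A_{n-1}$ along the $T$-orbit of $x\in I_n$, the former of length $r_n^+$ (first return to $I_n$) and the latter of length $r_{n-1}^+$ (first return to $I_{n-1}\supset I_n$). The heuristic is that prolonging an already strongly hyperbolic block perturbs the stable direction by $O(\|A^k\|^{-2})$, and this smallness must propagate to derivatives of order $r\le[q_{n-1}^{(1-a)/10}]$ with the $C^{r+1}$-norm loss controlled by Lemma \ref{newlemma2.3}.

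First I would establish the $C^0$ estimate. Decompose the orbit segment of length $r_n^+$ by its intermediate returns to $I_{n-1}$: let $r_{n-1}^+=j_1<j_2<\cdots<j_m=r_n^+$, with $j_{i+1}-j_i\ge q_{n-1}$. By (the argument underlying) Lemma \ref{finiteLE}, each block $\{A_{n-1}(T^{j_i}x),\ldots,A_{n-1}(T^{j_{i+1}-1}x)\}$ is $\lambda_n$-hyperbolic. Writing $s^{(k)}(x):=\overline{s(A_{n-1}^k(x))}$ and applying the standard projective-contraction estimate
\[
|s(BA)-s(A)|\le C\,\|B\|\cdot\|A\|^{-2},
\]
one obtains $|s^{(j_{i+1})}-s^{(j_i)}|\le C\lambda_n^{-2j_i}$. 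Telescoping $\bar s_n-s_{n-1}=\sum_{i\ge 1}(s^{(j_{i+1})}-s^{(j_i)})$ and using $j_i\ge iq_{n-1}$ then gives $|\bar s_n-s_{n-1}|_{C^0}\le C'\lambda_n^{-2q_{n-1}}$, much stronger than the target.

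For the $r$-th derivative with $1\le r\le[q_{n-1}^{(1-a)/10}]$, I would combine this $C^0$ smallness with the a priori high-order derivative bound of Lemma \ref{newlemma2.3}, applied to each intermediate $s^{(j_i)}$ (whose defining block is $\lambda_n$-hyperbolic of length $\ge q_{n-1}$, so Lemma \ref{newlemma2.3} applies verbatim). With $r':=[q_{n-1}^{(1-a)/10}]$ one has $\|\bar s_n-s_{n-1}\|_{C^{r'}}\le 2(r')^{r'}\lambda_n^{2r'}g_n(r')$, and a Hadamard convexity (Gagliardo--Nirenberg) inequality gives
\[
\|\bar s_n-s_{n-1}\|_{C^r}\le C_r\,\|\bar s_n-s_{n-1}\|_{C^0}^{1-r/r'}\,\|\bar s_n-s_{n-1}\|_{C^{r'}}^{r/r'}.
\]
For $r\le r'/2$ this is bounded by $\lambda_n^{-2q_{n-1}(1-r/r')+2r}$ times polynomial and $g_n$ factors, which is $\le\lambda_n^{-q_{n-1}}$ after observing that $g_n(r')\le\exp(cr'q_n^{2a})\ll\lambda_n^{q_{n-1}/2}$ thanks to the hypothesis $\lambda\gg e^{q_N^{a+1}}$. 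To cover the full range $r\le r'$, I would either iterate this interpolation on a nested scale (bootstrapping a stronger $C^{r''}$ bound with $r''=2r'$, which requires a minor extension of Lemma \ref{newlemma2.3}) or work directly on the telescoping sum and differentiate term-by-term via Fa\`a di Bruno's formula, extracting the $\|A^{j_i}\|^{-2}$ smallness from each differentiated term.

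The main technical obstacle is showing that the $r$-th derivatives of the individual telescoping differences $s^{(j_{i+1})}-s^{(j_i)}$ inherit the exponential $\lambda_n^{-2j_i}$ smallness of the $C^0$ bound, up to an acceptable polynomial loss. A naive application of Lemma \ref{newlemma2.3} bounds each $s^{(j_i)}$'s derivatives but sees no smallness in the difference. Exploiting cancellation requires writing $s(BA)-s(A)$ implicitly via the least-singular-value equation from the paper (the zero set of $g(x,\theta)=|A^k(x)\hat\theta|^{-2}$), differentiating this implicit equation in $x$, and observing that the $\|A\|^{-2}$ factor persists through each application of the Implicit Function Theorem. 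The precise range $r\le[q_{n-1}^{(1-a)/10}]$ is exactly what allows the polynomial growth factors $(r!)$ and the exponential term $e^{cr\cdot q_n^{2a}}$ arising from differentiating the cocycle to be swamped by $\lambda_n^{2q_{n-1}}$. The companion estimate on $\bar s'_n-s'_{n-1}$ follows by the identical argument applied to the inverse cocycle.
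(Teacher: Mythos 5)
Your $C^0$ step and your diagnosis of where the difficulty lies are both sound, and the interpolation idea is genuinely different from the paper's route; but as written the argument has a real gap on the upper half of the range, $r'/2<r\le r':=[q_{n-1}^{\frac{1-a}{10}}]$, which you flag but do not close. At $r=r'$ the Landau--Kolmogorov interpolation between $C^0$ and $C^{r'}$ returns nothing beyond the a priori bound $(r')^{r'}\l_n^{2r'}g_n(r')$ of Lemma \ref{newlemma2.3}, and that quantity does not decay in $n$ at all: the only decaying factor inside $g_n$ is $\l_n^{-2(1-\e)j}$ evaluated at $j$ as small as $q_N$, with $N$ fixed, whereas the target is $\l_n^{-q_{n-1}}$. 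Your first proposed fix --- bootstrapping a $C^{2r'}$ bound --- is not a minor extension: the construction only controls the derivatives of the cut-off functions $f_k$ up to order $[q_k^{\frac{1-a}{10}}]$, so for $k$ near $n-1$ the cocycle's own derivatives of orders between $r'$ and $2r'$ are not a priori bounded without changing the construction; and even granting such a bound, interpolating at $r=r'$ leaves a multiplicative factor of size roughly $(2r'q_N)^{r'}\l_n^{2r'}\gg1$, so one lands at $\l_n^{-q_{n-1}(1-o(1))}$ rather than the stated $\l_n^{-q_{n-1}}$. Your second fallback --- differentiating the telescoping sum term by term and extracting the $\|A^{j}\|^{-2}$ smallness --- is exactly where all the work lies, and it is not carried out; your own ``main technical obstacle'' paragraph concedes this.

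For comparison, the paper avoids interpolation entirely and works with the explicit recursion for the derivatives. It writes $\frac{d\bar s_n}{dx}=H(x,\t_0,\dots,\t_{r_{n-1}^+-1})\big|_{\t_\bullet=\bar\t_\bullet}+\tilde H$ and $\frac{ds_{n-1}}{dx}=H(x,\t_0,\dots,\t_{r_{n-1}^+-1})$ with the same kernel $H=\sum_{k<r_{n-1}^+}F_k$; the tail $\tilde H=\sum_{k\ge r_{n-1}^+}F_k$ is killed outright by $|F_k|\le\l_n^{-2k(1-\e)}$ together with the derivative bounds (\ref{dF}) and Lemma \ref{newlemma2.3}; and the remaining difference $H(\t_\bullet)-H(\bar\t_\bullet)$ is controlled by the $C^0$ closeness $|\bar\t_j-\t_j|\le\l^{2j}|\bar\t_0-\t_0|$ of the angle variables, propagated to order $r$ by induction on $r$ with a per-derivative loss of order $q_n\l_n^{2}\l_n^{2Mq_{n-1}^{2a}}$ --- precisely what the restriction $r\le[q_{n-1}^{\frac{1-a}{10}}]$ and $a<\frac1{10}$ are designed to absorb. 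The cancellation you need for $r>r'/2$ has to be extracted from this recursion (as in Lemma \ref{sn-s(n-1)}), not from interpolation against a non-decaying high-order bound.
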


\vskip 0.3cm
 \begin{Corollary}
$A_N(x), A_{N+1}(x), \cdots, $
 is convergent to $D_{\infty}(x)$ in ${\cal C}^{\infty}$-topology.
 \end{Corollary}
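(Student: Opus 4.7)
The plan is to promote the $\mathcal{C}^1$ convergence established via $(a)_n$ to $\mathcal{C}^r$ convergence for every fixed $r$, which is precisely $\mathcal{C}^\infty$-convergence, by re-deriving the estimate underlying $(a)_n$ at higher-order derivatives using Lemma \ref{newlemma3.2} and the controlled growth of $f_n$ given by (\ref{dfdx}).

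Fix an arbitrary positive integer $r$. For all $n$ large enough (depending on $r$) we have $r\le [q_{n-1}^{(1-a)/10}]$, so Lemma \ref{newlemma3.2} applies and gives
\[
\left|\tfrac{d^k e_n}{dx^k}\right|
\;=\;\left|\tfrac{d^k(({s}_{n-1}-s'_{n-1})-({\bar s}_{n}-{\bar s}'_{n}))}{dx^k}\right|
\;\le\;2\,\lambda_n^{-q_{n-1}},\qquad 0\le k\le r,
\]
on $I_n$. Combining this with (\ref{dfdx}), namely $|d^{r-k}f_n/dx^{r-k}|=O(q_n^{2(r-k)})$ for the cut-off $f_n$, the Leibniz formula applied to $\hat e_n=e_n\cdot f_n$ yields
\[
\left|\tfrac{d^r \hat e_n}{dx^r}\right|
\;\le\; \sum_{k=0}^{r}\binom{r}{k}\left|\tfrac{d^k e_n}{dx^k}\right|\cdot\left|\tfrac{d^{r-k}f_n}{dx^{r-k}}\right|
\;\le\; C_r\,q_n^{2r}\,\lambda_n^{-q_{n-1}},
\]
uniformly in $x\in\SS^1$ (since $\hat e_n$ vanishes off $I_n$). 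Because $\phi_n-\phi_{n-1}=\hat e_n$, this upgrades $(a)_n$ to
\[
|\phi_n(x)-\phi_{n-1}(x)|_{\mathcal{C}^r}\;\le\; C_r\,q_n^{2r}\,\lambda_n^{-q_{n-1}}
\]
for all $n$ sufficiently large (depending on $r$).

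Since $\lambda_n\ge \lambda^{1-2\varepsilon}\gg 1$ and $q_n$ grows at least exponentially (in fact faster than $q_{n-1}$ by the bounded-type hypothesis, though only $q_{n-1}\to\infty$ is needed here), the quantity $q_n^{2r}\lambda_n^{-q_{n-1}}$ decays super-exponentially in $n$, hence $\sum_{n}q_n^{2r}\lambda_n^{-q_{n-1}}<\infty$. Therefore $\{\phi_n\}$ is Cauchy in $\mathcal{C}^r(\SS^1)$ for every $r$, which together with the already-established $\mathcal{C}^1$-limit $D_\infty$ implies $\phi_n$ (and hence $A_n=\Lambda\cdot R_{\phi_n}$) converges to $D_\infty$ in $\mathcal{C}^\infty$-topology.

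The only nontrivial input is Lemma \ref{newlemma3.2}, which is the main obstacle; it provides derivative bounds on $\bar s_n-s_{n-1}$ and $\bar s'_n-s'_{n-1}$ up to order $[q_{n-1}^{(1-a)/10}]$, and is proved in the Appendix by the same inductive scheme as Lemma \ref{Younglemma3}, carefully tracking how the tiny transversality $e^{-(10 q_{n-1}^2)^a}$ between stable and unstable directions at the previous stage interacts with the derivative bounds of Lemma \ref{newlemma2.3}. Given that lemma, the corollary reduces to the Leibniz-rule argument above.
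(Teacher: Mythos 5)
Your proof is correct and follows essentially the same route as the paper: reduce to $\mathcal{C}^\infty$-convergence of $\phi_n$ via $\phi_n-\phi_{n-1}=\hat e_n=e_n f_n$, invoke Lemma \ref{newlemma3.2} for the derivative bounds on $e_n$ up to order $[q_{n-1}^{(1-a)/10}]$, and combine with (\ref{dfdx}). You merely make explicit the Leibniz-rule step and the summability argument that the paper leaves implicit.
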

 \Proof
 It is equivalent to prove that $\phi_n(x),\ n=N, N+1, \cdots$ converge in any  ${\cal C}^{k}$ topology.
 For any fixed $k\in \mathbb{N}$, we take $n_1(k)$ so that $k\le [(r_{n-1}^+)^{\frac1{10}}]$ if  $n\ge n_1(k)$. From the definition of $\phi_n(x)$, we have
 $\phi_{n}(x)-\phi_{n-1}(x)=\hat{e}_n(x)$ where
 \[
 \hat{e}_n(x)=({\bar s}_{n}(x)-s_{n-1}(x)+{\bar s}'_{n}(x)-s'_{n-1}(x))f_n(x)=e_n(x)f_n(x)
 \] With the help of Lemma \ref{newlemma3.2}, we have
 $$
\left|\frac{d^re_n(x)}{dx^r}\right|\le  2\l^{-\frac13 (r_{n-1}^+)^{\frac23}}\cdot\|\phi_0\|_{r},\  0\le r \le k.
$$
 This together with (\ref{dfdx}) implies that
 \beq\label{daoshuhate}\begin{array}{ll}
 &\left|\frac{d^r\hat{e}_n(x)}{dx^r}\right|\le \sum_{|L_1|+|L_2|=r}|D^{L_1}{e}_n(x)|
 \cdot |D^{L_2}f_n(x)|\\
 \\
 &\le 2(r+1)!\cdot\|\phi_0\|_{r}\cdot q_n^{3r}\cdot \l^{-\frac13 (r_{n-1}^+)^{\frac23}}
 \le 2(k+1)!\cdot\|\phi_0\|_{k}\cdot (M\cdot r^+_{n-1})^{3k}\cdot \l^{-\frac13 (r_{n-1}^+)^{\frac23}}.
  \end{array}
\eeq
Take  $n_2(k)$ so that $2(k+1)!\cdot\|\phi_0\|_{k}\cdot (M\cdot r^+_{n-1})^{3k}\le
 \l^{\frac16 (r_{n-1}^+)^{\frac23}}$ if $n\ge n_2(k)$. Then for any $n\ge \max\{ n_1(k), n_2(k)\}$, it holds that
 $$
|\phi_n(x)-\phi_{n-1}(x)|_{C^k}= \left|\hat{e}_n(x)\right|_{\mathbb{C}^k}\le \l^{-\frac16 (r_{n-1}^+)^{\frac23}},
 $$

 Hence $\{A_n(x)\}_{n=N}^{\infty}$ converges in $\mathbb{C}^k$-topology for any $k\in \mathbb{N}$.
 This concludes the proof.
 \hfill{}  \qedbox
\vskip 0.4cm

\noindent {\it Construction of $\tilde{A}_n(x)$}\quad
Next we will construct the sequence $\tilde{A}_n(x),\ n=N, N+1, \cdots,$ which is also ${\cal C}^{\infty}$-convergent to $D_{\infty}$, but
  the Lyapunov exponent of each $\tilde{A}_n(x)$ possesses an upper bound less than
 $(1-\delta)\log\l$.

 Let ${\tilde e}_n(x)=-(s_n(x)-s'_n(x))\cdot f_n(x)$ be a ${\cal C}^{\infty}$ class $2\pi-$periodic function such that it is
 $-(s_n(x)-s'_n(x))$ on
 $\frac{I_{n}}{10}$ and
 vanishes outside $I_{n}$. From $(c)_{n}$, we have that ${\tilde e}_n(x)=\phi_0(x)\cdot f_n(x)$. Then we define
 $\tilde{\phi}_n(x)=\phi_n(x)+{\tilde e}_n(x)$.
\begin{Lemma}\label{daoshuphi0}
For $0\le k\le [q_{n}^{a}]$ and $x\in I_n$, it holds that
$$|\phi_0^{(k)}(x)|\le e^{-\frac{q_n^{2a}}{4}}.$$
\end{Lemma}
The proof can be found in the Appendix.

\vskip 0.4cm
Take $n_3(k)$ so that $k\le [q_{n}^{a}]$ if $n\ge n_3(k)$.
 Combining (\ref{dfdx}) with Lemma  \ref{daoshuphi0}, we have $|{\tilde e}_n(x)|_{\mathbb{C}^k}\le e^{-\frac{q_n^{2a}}{8}}$ if $n\ge n_3(k)$. It follows that
 $\tilde{A}_n(x)=\Lambda\cdot R_{\frac{\pi}{2}-\tilde{\phi}_n(x)}$ is convergent to $D_{\infty}(x)$
 in ${\cal C}^{\infty}$-topology.
\vskip 0.3cm
 In the same way
 as in Section 4, we can obtain that $(1-\delta)\log\l$ is the upper bound of the Lyapunov exponent for
 $\tilde{A}_n(x)$, while the lower bound of the Lyapunov exponent of  $D_\infty(x)$ is $(1-4\e)\log\l$, which
produces the discontinuity since $4\e<\delta$.
The proof of Theorem \ref{maintheorem} in  ${\cal
C}^{\infty}$ case is thus finished.

\begin{appendix}\section{Appendix}
In the Appendix, we will give the proofs of Lemmas \ref{sn-s(n-1)} and \ref{newlemma3.2}. \vskip 0.4cm \noindent \subsection{ Some lemmas.} Before proving Lemmas \ref{sn-s(n-1)} and \ref{newlemma3.2}, we firstly give some lemmas as preparations.

\noindent
\begin{Lemma}\label{Younglemma1}
Suppose that  $\{A_0,A_1, . . .
,A_{n-1}\}$ is $\mu$-hyperbolic. Let $s_i=s(A^i)$, $i=1, 2, \cdots, n$. Then for $\mu\gg1$, we have
$$
{\rm (a)}\ \angle(s_i,s_n)\le \mu^{-2i(1-\e)+3\e}, \quad \quad {\rm (b)}\
|A^{i}s_n|\le \mu^{-i(1-3\e)+3\e}.
$$
\end{Lemma}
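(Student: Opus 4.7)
The strategy is to exploit the factorisation $A^n = B\cdot A^i$ with $B:=A_{n-1}\cdots A_i$. Since $s_n$ minimises $|A^n v|$ over unit vectors, the image $A^i s_n$ is forced to lie almost parallel to $s(B)$: otherwise the $u(B)$-component would be amplified by $\|B\|$ and $|A^n s_n|$ could not attain $\|A^n\|^{-1}$. This pins $s_n$ down in terms of $s(B)$ and the singular geometry of $A^i$, and comparing it to $s_i$ yields (a); (b) then falls out of the decomposition of $s_n$ in the basis $\{s_i,u_i\}$.

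First I would collect the norm estimates implied by $\mu$-hyperbolicity. Applying (i)–(ii) to the forward sequence gives $\mu^{i(1-\epsilon)}\le\|A^i\|\le\mu^i$, and applying the same conditions to the reversed-inverse sequence, together with $\|M^{-1}\|=\|M\|$ on $SL(2,\RR)$, gives $\mu^{(n-i)(1-\epsilon)}\le\|B\|\le\mu^{n-i}$.

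For (a), I would write $s_n=\cos\phi\,s_i+\sin\phi\,u_i$ with $\phi=\angle(s_i,s_n)$. Because $\{s_i,u_i\}$ is orthonormal and $A^i$ maps it to the orthogonal pair $\{A^i s_i,A^i u_i\}$ of lengths $\|A^i\|^{-1}$ and $\|A^i\|$, introducing the angle $\alpha=\angle(A^i s_i/|A^i s_i|,\,s(B))$ produces an explicit quadratic form
\[|A^n s_n|^2=\bigl(\|A^i\|^{-1}\cos\alpha\cos\phi-\|A^i\|\sin\alpha\sin\phi\bigr)^2\|B\|^{-2}+\bigl(\|A^i\|^{-1}\sin\alpha\cos\phi+\|A^i\|\cos\alpha\sin\phi\bigr)^2\|B\|^2.\]
The ansatz $\tan\phi=-\tan\alpha/\|A^i\|^2$ kills the second (expanding) term; the minimum value is then $\cos^2\phi_{\min}/(\|A^i\|^2\|B\|^2\cos^2\alpha)$. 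Equating this to $\|A^n\|^{-2}$ gives $\cos\alpha=\|A^n\|/(\|A^i\|\|B\|)$. From the norm bounds in Step~1 one reads off $\cos\alpha\ge\mu^{-O(\epsilon)}$, hence $|\tan\alpha|\le\mu^{O(\epsilon)}$, and therefore $|\phi|\le|\tan\alpha|/\|A^i\|^2\le\mu^{-2i(1-\epsilon)+3\epsilon}$ for $\mu\gg 1$, which is (a).

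For (b), the orthogonality of $A^i s_i$ and $A^i u_i$ gives $|A^i s_n|^2=\cos^2\phi\,\|A^i\|^{-2}+\sin^2\phi\,\|A^i\|^2$. Plugging in $|\sin\phi|\le\mu^{-2i(1-\epsilon)+3\epsilon}$ from (a) and $\|A^i\|\le\mu^i$, the expanding term is at most $\mu^{-2i(1-2\epsilon)+6\epsilon}$, which dominates the contracting one, so $|A^i s_n|\le\mu^{-i(1-2\epsilon)+3\epsilon}$, strengthening the claim. The main obstacle is the clean control of $\cos\alpha$ in Step~2: the estimate $\cos\alpha\ge\|A^n\|/(\|A^i\|\|B\|)$ coming directly from the two-sided bounds is coarse, and sharpening the prefactor so that it lands inside $\mu^{3\epsilon}$ uniformly in $n$ is what really uses the hyperbolicity in both the forward and inverse directions (via Lemma~\ref{Younglemma5}-type angle estimates), rather than just the singular-value inequalities.
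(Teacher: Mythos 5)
There is a genuine gap in part (a), and it is exactly the one you flag at the end without resolving. Your factorisation $A^n=B\cdot A^i$ forces you to control the angle $\alpha=\angle(\widehat{A^is_i},s(B))$ through the inequality $\mu^{n(1-\e)}\le\|A^n\|\lesssim\|A^i\|\,\|B\|\,|\cos\alpha|\le\mu^{n}|\cos\alpha|$, which only yields $|\cos\alpha|\gtrsim\mu^{-n\e}$, hence $|\tan\alpha|\lesssim\mu^{n\e}$ and $\angle(s_i,s_n)\lesssim\mu^{-2i(1-\e)+Cn\e}$. The exponent carries $n\e$, not $3\e$: for $n\gg i$ this is far weaker than the claim and can even be vacuous (larger than $1$) once $n\e\ge 2i(1-\e)$. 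The backward hyperbolicity of $\{A_{n-1}^{-1},\dots,A_0^{-1}\}$ gives a \emph{lower} bound $\|B\|\ge\mu^{(n-i)(1-\e)}$, which does not improve the lower bound on $|\cos\alpha|$, so ``hyperbolicity in both directions'' does not rescue the one-shot splitting. (A smaller slip: the value of your quadratic form at the ansatz $\tan\phi=-\tan\alpha/\|A^i\|^2$ is only an \emph{upper} bound for the minimum $\|A^n\|^{-2}$, so ``equating'' it to $\|A^n\|^{-2}$ gives an inequality in the unhelpful direction; the usable lower bound on $|\cos\alpha|$ comes from bounding $\|BA^i\|$ from above, and it is the $\mu^{-n\e}$ bound above.)

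The paper avoids this loss by telescoping over consecutive indices instead of splitting at $i$ once: decomposing $s_i$ in the frame $s_{i+1}\oplus u_{i+1}$ and comparing $|A^{i+1}s_i|\le\|A_i\|\,|A^is_i|\le\mu\cdot\mu^{-i(1-\e)}$ with $|A^{i+1}u_{i+1}|=\|A^{i+1}\|\ge\mu^{(i+1)(1-\e)}$ gives $\angle(s_i,s_{i+1})\le\mu^{-2i(1-\e)+2\e}$ — only a single factor $\|A_i\|\le\mu$ enters, so the $\e$-loss per step is bounded independently of $n$ — and then $\angle(s_i,s_n)\le\sum_{j\ge i}\angle(s_j,s_{j+1})$ is a geometrically decaying sum dominated by its first term, absorbing the constant into $\mu^{\e}$. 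If you want to salvage your route, you would have to run the same splitting recursively (e.g.\ compare $s_n$ to $s_{n-1}$, then $s_{n-1}$ to $s_{n-2}$, \dots), which is precisely the telescoping argument. Your part (b) is fine as written once (a) is available: the decomposition $|A^is_n|^2=\cos^2\phi\,\|A^i\|^{-2}+\sin^2\phi\,\|A^i\|^2$ with $|\sin\phi|\le\mu^{-2i(1-\e)+3\e}$ gives the stated bound (indeed slightly better), and is the same computation as the paper's.
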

\begin{Proof}
Let $u_i=u(A^i)$.
To prove (a), we write $s_i=v_1\oplus v_2$ respecting $s_{i+1}\oplus u_{i+1}$. Then we have
$$
\begin{array}{ll}
&|\sin \angle(s_{i},s_{i+1})|\cdot |A^{i+1} \cdot u_{i+1}|=|A^{i+1}\cdot v_2|\le |A^{i+1}\cdot s_i|
\\
\\
&\le \mu^{1+\e} \cdot |A^{i}\cdot s_i|\le \mu^{1+\e}\cdot
\mu^{-i(1-\e)}.
\end{array}
$$
On the other hand, $|A^{i+1} \cdot u_{i+1}|\ge \mu^{(i+1)(1-\e)}$.
Thus we obtain $|\angle(s_{i},s_{i+1})|\ll 1$ and
 $|\angle(s_{i},s_{i+1})|\approx|\sin \angle(s_{i},s_{i+1})|\le \mu^{-2i(1-\e)+2\e}$, which implies
 $$
 \angle(s_i, s_n)\le \sum _{j=i}^{n-1} \angle(s_{i},s_{i+1})\le \mu^{-2i(1-\e)+3\e}.
 $$

 To prove (b), we write $s_n=v_3\oplus v_4$ respecting $s_i\oplus u_i$. Then we have
 $$
 |A^{i}v_3|\le \mu^{-i(1-\e)}
 $$
 and
 $$
 \begin{array}{ll}
  &|A^{i}v_4|=|\sin \angle(s_i, s_n)|\cdot |A^{i}u_i | \\
  \\
  &\le \mu^{-2i(1-\e)+3\e}\cdot \mu^{i(1+\e)}
 =\mu^{-i(1-3\e)+3\e}.
 \end{array}$$
\end{Proof}\qedbox
\vskip 0.3cm
Let $A\in SL(2, \mathbb{R})$, $\t\in { \rm RP}^1$ and $\bar{A}\t=\psi$. It holds that
\beq\label{DADTHETA}|(D\bar{A})_{
\t}|=\frac{1}{|A\hat{\t}|^2}.\eeq   Then it follows that
$$
\hat{\t}=A^{-1}\cdot A\hat{\t}=|A\hat{\t}|\cdot A^{-1}\hat{\psi}=|A\hat{\t}|\cdot |A^{-1}\hat{\psi}|\cdot \hat{\t},
$$
which implies that $|A\hat{\t}|\cdot |A^{-1}\hat{\psi}|=1$, where $\hat{\t}$ and $\hat{\psi}$ are the unit vectors corresponding to $\t$ and $\psi$.

For $x\in I_n$, let $x_0=x$, $x_{i+1}=Tx_i$, $x_0'=T^{r_n^+-1}x$,
$x'_{i+1}=T^{-1} x'_i$. Define $$\theta_0=s_n(x_0)=\overline{s_n(A_n^{{r^+_{n}}}(x))},
\ \  \theta_{j+1}=\overline{A_n(x_j)}\theta_j$$
and  $$\theta'_0=s_n'(x'_0)=\overline{s_n(A_n^{{-r^+_{n}}}(x'_0))},\ \
 \ \
 \theta'_{j+1}=\overline{A_n^{-1}(x'_j)} \theta'_j,$$ $j=0, 1, \cdots,
r_n^+-1.$

Let
\begin{equation}\label{DLmabda} f(\l,\t):=\l^2\cdot
g^{-1}(\l,\t):=|D\bar{\Lambda}(\t)|=\frac{1}{|\Lambda
\hat{\t}|^2}=\frac{\l^2}{\sin^2\t+\l^4\cos^2\theta}.\end{equation}

Thus from (\ref{DADTHETA}), we have, for $i>j$,
\beq\label{fjandAJ}\begin{array}{ll}
&\prod_{t=j+1}^if_t
:=\prod_{t=j+1}^i f(\l, \frac{\pi}{2}+\t_{t+1})=\left|(D\overline{A_n^{-(i-j)}(x_i)})_{\t_{i}}\right|\\
\\
&=\frac{1}{|(D\overline{A_n^{(i-j)}(x_j)})_{\t_{j}}|}
=|{A_n^{(i-j)}(x_j)}\cdot{\hat{\t}_{j}}|^2.\end{array}
\eeq
From (b) of Lemma \ref{Younglemma1}, we have
\beq\label{hyperf}
\prod_{t=0}^{j-1}f_t\le \l^{-2j(1-3\e)}.
\eeq
Similarly we have
\beq\label{hyperf'}\begin{array}{cc}
\hskip -0.6cm\prod_{t=j+1}^if'_t
:=\prod_{t=j+1}^i f(\l, {\t'_{t}}+\frac{\pi}{2}-\phi_n(x'_{t-1}))
=|{A_n^{(i-j)}(x'_j)}\cdot{\hat{\t'_j}}|^2,\qquad
\prod_{t=0}^{j-1}f'_t\le \l^{-2j(1-3\e)}.\end{array}
\eeq
\vskip 0.3cm
\noindent Now we give estimates for $\prod_{t=j+1}^i f(\l, \frac{\pi}{2}+\t_{t+1})$ and $\prod_{t=j+1}^i f(\l, {\t'_{t}}+\frac{\pi}{2}-\phi_n(x'_{t-1}))
.$\\

\begin{Lemma}\label{sublinearinfty}
Let $\l\gg 1$.  Then for $a_k=f(\l, \frac{\pi}{2}+\t_{k})$ and $a'_k=f(\l,\t'_{k}+\frac{\pi}{2}-\phi(x'_k)),\ 0\le k\le r_n^+-1$, it holds that
\beq\label{partnorm}
|a_{i-1}\cdots a_{j}|,\quad |a'_{i-1}\cdots a'_{j}|\le \l^{-(i-j)}\cdot g_j(1),
\eeq where $g_x(r)=\max\{\hat{g}_x(r), 1\}$,
$$\hat{g}_x(r)=\left\{\begin{array}{ll}
(\phi(1/4M^2 x^2))^{-2c_7r^2},&\quad x\ge 0,\\
1,&\quad x=0,\end{array}\right.
$$
for $x\ge 0$, $r\in \mathbb{N}$ and $c_7>0$ depending only on $M$.
\end{Lemma}
\begin{Proof}
We only give estimates for $a_k$ and the estimates for $a'_k$ are similar. From (\ref{hyperf}), we have $|a_{i-1}\cdots a_{0}|\le \l^{-2i(1-3\e)}$, together with $|a_{j-1}\cdots a_{0}|\ge \l^{-2j}$, which implies that
$$
|a_{i-1}\cdots a_{j}|\le \l^{-2i(1-3\e)+2j}=  \l ^{-(i-j)}\cdot g_j(1)^{2}
\cdot \l^{-(i-j)+6\e i}\cdot g_j(1)^{-2} \le  \l^{-(i-j)}\cdot g_j(1)^{2}.$$
It is trivial that  (\ref{partnorm}) holds if $i-j> \frac{6\e}{1-6\e}j$. Thus we only need to consider the case
 $i-j\le  \frac{6\e}{1-6\e}j$.\\

For any $k\ge 1$, define $n(k)$ be the integer such that $q_{n(k)}\le k < q_{n(k)+1}$, where we define $q_0=1$ for convenience.  Then $T^kx {\rm (mod\ } 2\pi{\rm )}\not\in I_{n(k)+1}$
since $r^+_{n(k)+1}\ge q_{n(k)+1}$, which implies $|T^kx-c_s|\ge \frac{1}{q_{n(k)+1}^2}$, $s=1, 2$. From $q_{n(k)+1}\le M\cdot q_{n_k}$, it follows that $|T^kx-c_s|\ge \frac{1}{M^2\cdot q_{n(k)}^2}\ge \frac{1}{M^2 k^2}$. From the assumption, we get
that for $k\in [j, i-1]\subset[j, 2j]$, \beq\label{lowbound[i,j]}
|T^kx-c_s|\ge \frac{1}{4M^2 j^2}.
\eeq

Define $S(m)=\{k\in [j, i-1]| T^kx\ {\rm (}{\rm mod}\ 2\pi{\rm)}\in I_m,\ m\ge N\}.$
Let $$m^*=\max\left\{m\left| \max \{k|k\in S(m)\}-\min\{k|k\in S(m)\}\ge \frac {9}{10}\cdot |i-j|\right.\right\}$$ if it exists.

If $m^*$ exists, let $k_1=i_1, i_2, \cdots, i_t=k_2$ are all the points in $[j, i]$ such that $T^{i_s}x\in I_{m^*},\ 1\le s\le t$.
Then $t$ is a constant depending only on $M$.
In fact, without loss of generality, let $i-j\ge 60$ since otherwise, $t\le 60$. Since $\omega$ is of bounded type, similar to the proof of Corollary \ref{ratio}, we
know that there exists a constant $c_6=c_6(M)>0$ such that $c_6 \le
\frac{\min t_m(x)}{\max {t_{m+1}}(x)}\le 1$, where  $\min t_m(x)=\min_{x\in I_m} \min \{i>0| T^i x\ (mod\ 2\pi) \in
I_m\}$ and $\max {t_m}(x)=\max_{x\in  I_m} \min \{i>0| T^i
x\ (mod\ 2\pi) \in {I}_m\}$.

Then if $t\ge [\frac{1}{c_6}]+1$, one sees that $S(m^*+1)\not=\emptyset$.
Moreover, it holds that $\max {t_{m^*+1}}(x)>[\frac{1}{30}(i-j)]+1$. Otherwise, we have that
$$\max\{k\in S(m^*+1)\}\in [i-1-(\frac{1}{30}(i-j)+1), i-1],\ \quad
\min\{k\in S(m^*+1)\}\in [j, j+\frac{1}{30}(i-j)+1],$$
which contracts the definition of $m^*$.
Thus it follows that
$\min t_{m^*}(x)\ge [{\frac{c_6}{30}(i-j)}]$. Then we obtain that $t\le [\frac{30}{c_6}]+1$.

 From the definition of $I_{m^*}$, we have
$$
|A^{i_{s+1}-i_s}(T^{i_s}x)\cdot\hat{\t}_{i_s}|\ge \l_{m^*}^{i_{s+1}-i_s}\ge \l_{\infty}^{i_{s+1}-i_s},\quad 0\le s\le t-1.
$$
From the construction of $\phi_n$, it holds that $\phi_{m^*}(x)=\phi_n(x)$ on $I_{m^*}\backslash I_n  $. Then from ${\rm (2)_n}$ in Proposition \ref{P2} and (\ref{lowbound[i,j]}), we have
$$
|s_{m^*}(T^{i_s}x)-s'_{m^*}(T^{i_s}x)|\ge \frac12\phi(1/4M^2 j^2),
$$
which, by Lemma \ref{Younglemma5}, implies
$$\begin{array}{ll}
&|A^{i_{s+1}-i_s}(T^{i_s}x)\cdot A^{i_{s}-i_{s-1}}(T^{i_{s-1}}x)\cdot \hat{\t}_{i_{s-1}}|\\
\\
&\ge \frac 14 |A^{i_{s+1}-i_s}(T^{i_s}x)\cdot \hat{\t}_{i_s}|\cdot |A^{i_{s}-i_{s-1}}(T^{i_{s-1}}x)\cdot \hat{\t}_{i_{s-1}}|\cdot |s_{m^*}(T^{i_s}x)-s'_{m^*}(T^{i_s}x)|\\
\\
&\ge \frac14\l_{\infty}^{i_{s+1}-i_{s-1}}\cdot \phi(1/4M^2 j^2),
\qquad 0\le s\le t-1.\end{array}
$$

Consequently, we see that
$$\begin{array}{ll}
&|A^{i-j}(T^{j}x)\cdot \hat{\t}_{j}|\ge |A^{i_{t}-i_0}(T^{i_0}x)\cdot \hat{\t}_{i_{0}}|\cdot \l^{- \frac{1}{10}(i-j)}\ge \frac14\l_{\infty}^{k_{2}-k_{1}}\cdot \phi(1/4M^2 j^2)\cdot \l^{- \frac{1}{10}(i-j)}\\
\\
&\ge \frac14\l_{\infty}^{\frac{9}{10}(i-j)}\cdot \phi^{t+1}(1/4M^2 j^2)\cdot \l^{- \frac{1}{10}(i-j)}\ge \l^{\frac23(i-j)}\cdot \phi^{c_7}(1/4M^2 j^2)\end{array}
$$
if $\l\gg 1$, which implies (\ref{partnorm}) with $c_7=\max\{[\frac{30}{c_6}]+1, 60\}$.

Otherwise, if $m^*$ does not exist. Let $t'\ge 0$ be the number of items in the set $S(N)$.
 Without loss of generality, we assume $t'\ge 2$ and $i-j\ge 60$. Let $k_3=j_1<j_2<\cdots <j_{t'}=k_4$ be all the points in $[j, i]$ such that $T^{j_s}x\in I_{N},\ 1\le s\le t'$. Then similar to the above argument, $\max {t_{N}}(x)>[\frac{1}{30}(i-j)]+1$, otherwise $m^*$ will exist. It implies
$\min t_{N}(x)\ge [{\frac{c_6}{30}(i-j)}]$. Thus $t'\le [\frac{30}{c_6}]+1$.
Then
(\ref{partnorm}) can be proved  similarly by the above argument.
 \hfill{} \qedbox \end{Proof}

\footnote{
Lemma \ref{sublinearinfty} describe some kind of ``sub-exponential growth property" in the following sense. Let $\phi(x)$ be
defined as in Sections 3 or 5, then one can see that there exists $0<c<1$ such that
$$
|a_{i-1}\cdots a_j|\le g_j(1)\le \l^{j^c}
$$
for any $0\le j\le i\le r_n^+-1$. For example, for $\phi(x)$ defined as in Section 5, we can set $c=2a$.}\\
 We also need the following estimate
for $f,\ g$ which are defined in (\ref{DLmabda}).

\noindent \begin{Lemma}\label{cclaim}\quad  For any $i\ge 1$, it holds that
\beq\label{linearmap} |\pa_{\t}^i f(\l, \t)|\le
4^i\cdot (i!)^2\cdot\l^{2i}|f(\l, \t)|. \eeq
\end{Lemma}
 \noindent {\it Proof.}\quad From the expression of
$f$ in (\ref{DLmabda}), $\pa_{\t}^i f(\l, \t)$ can be written as the sum of the
following terms:
$$
\l^2\cdot k!\cdot g^{-(k+1)}\cdot \pa^{i_1}_{\t}g\cdots \pa^{i_k}_{\t}g,
$$
where $1\le k\le i$, $i_1+\cdots +i_k=i$ with $i_1, \cdots, i_k>0$ and the number of the terms in the sum
is $i!$.
When $i_s=1$, then $|g^{-1}|\cdot
|\pa^{i_s}_{\t}g|=\frac{|2\sin\t(\l^2-\l^{-2})\cdot\l^2
\cos\t|}{\sin^2\t+\l^4\cos^2\t}$.
Since
$|\frac{\l^2 \sin\t\cos\t}{\sin^2\t+\l^4\cos^2\t}|\le 1$, we have  $|g^{-1}|\cdot |\pa_{\t}g|\le 4\cdot\l^2$.

\vskip 0.15cm \noindent If $i_s>1$, then $|g^{-1}|\cdot
|\pa^{i_s}_{\t}g|\le
\frac{|\l^4\sin(2\t+\frac{\pi}{2}(i_s-1))|}{\sin^2\t+\l^4\cos^2\t}\le
\l^4$
 since $\sin^2\t+\l^4\cos^2\t>1$ for $\l>1$. In conclusion, we have $|g^{-1}|\cdot |\pa^{i_s}_{\t}g|\le 4\l^{2i_s}$. Thus it follows that
 $$
\l^2|g^{-k-1}\cdot \pa^{i_1}_{\t}g\cdots \pa^{i_k}_{\t}g|\le
\l^2\cdot |g|^{-1}\cdot \prod_{s=1}^k|g|^{-1}|\pa^{i_s}_{\t}g|\le
4^i\cdot\l^2\cdot |g|^{-1}\cdot \l^{2i}=4^i\cdot |f|\cdot \l^{2i}.
$$
Thus $|\pa_{\t}^i f(\l, \t)|\le 4^i\cdot (i!)^2\cdot \l^{2i}\cdot |f|$ since $k\le i$.
This ends the proof of the lemma.
\hfill{} \qedbox
\vskip 0.4cm


 \noindent \subsection{Upper bound estimates for $\t_j$ and $\t'_j$ }\quad
In this section, we will give upper bound estimates for derivatives of $\t_j$ and $\t'_j$ which are
 defined in the last section.\\

 We firstly derive out the recursive expression for  $\t_j$ and $\t'_j$.  From (\ref{DLmabda}) and the definition of $\t_j, \t'_j$, we have
$$
\begin{array}{ll}\frac{d\t_{j}}{dx}&=f(\l, \frac{\pi}{2}+\t_{j+1})\cdot \frac{d\t_{j+1}}{dx}+\phi_{n}'(x_{j}),\\
\\
\frac{d{\t}'_{j}}{dx}&=f(\l, {\t'_{j+1}}-\phi_n(x'_j)+\frac{\pi}{2})\cdot (\frac{d{\t'_{j+1}}}{dx}-\phi_{n}'(x'_{j})),\end{array}
$$
or equivalently,
$$
\begin{array}{ll}\frac{d\tilde{\t}_j}{dx}&=f(\l, \tilde{\t}_{j+1}+\phi_{n}(x_{j+1}))\cdot
(\frac{d\tilde{\t}_{j+1}}{dx}
+\phi_{n}'(x_{j+1})),\\
\\
\frac{d{\t'_j}}{dx}&=f(\l, {\t'_{j+1}}+\tilde{\phi}_{n}(x'_{j+1}))\cdot
(\frac{d{\t'_{j+1}}}{dx}
+\tilde{\phi}_{n}'(x'_{j+1})),\end{array}
$$
where $\tilde{\t}_j=\frac{\pi}{2}+\t_{j}-\phi_{n}(x_{j})$ and $\tilde{\phi}_{n}(x'_{j+1})=\frac{\pi}{2}-\phi_{n}(x'_{j})$.
For convenience, we will still use notations $\t_j$ and ${\phi}_{n}(x'_{j+1})$ to denote $\tilde{\t}_j$ and $\tilde{\phi}_{n}(x'_{j+1})$. Then we obtain
\begin{equation}\label{recurrencedthetadx}\begin{array}{ll}\frac{d\t_{j}}{dx}&=f(\l, \t_{j+1}+\phi_{n-1}(x_{j+1}))\cdot (\frac{d\t_{j+1}}{dx}+\phi_{n-1}'(x_{j+1})):=f_{j+1}\cdot (\frac{d\t_{j+1}}{dx}+\phi_{n-1}'(x_{j+1})),\\
\\\frac{d{\t'_{j}}}{dx}&=f(\l, {\t'_{j+1}}+\phi_{n-1}(x'_{j+1}))\cdot (\frac{d{\t'_{j+1}}}{dx}+\phi_{n-1}'(x'_{j+1})):={f'_{j+1}}\cdot (\frac{d{\t'_{j+1}}}{dx}+\phi_{n-1}'(x'_{j+1})),\end{array}
\end{equation}
where $ \ 0\le j\le r_{n-1}^+-2$.

By (\ref{recurrencedthetadx}), we have
\beq\label{thetajtothetalast}
\begin{array}{ll}&\frac{d\t_{j}}{dx}=f_{j+1}\cdot (\frac{d\t_{j+1}}{dx}+\phi_{n-1}'(x_{j+1}))\\
\\
&=f_{j+1}\cdot (f_{j+2}\cdot (\frac{d{\t}_{j+2}}{dx}+\phi_{n-1}'(x_{j+2}))+\phi_{n-1}'(x_{j+1}))\\
\\
&=f_{j+1}\cdot (f_{j+2}(\cdots (\frac{d{\t}_{i}}{dx}+\phi_{n-1}'(x_{j}))\cdots)+\phi_{n-1}'(x_{j+1}))\\
\\
&=\sum_{i=j+1}^{r_n^+-1}\prod_{t=j+1}^i f_{t}\cdot \phi_{n-1}'(x_{i})+ \prod_{i=j+1}^{r_n^+-1} f_{i}\cdot \frac{d\t_{r_n^+-1}}{dx}\\
\\
&:=\sum_{i=j+1}^{r_n^+-1}F_{j,i}(x, \t_{j+1}, \cdots, \t_{i})+  \prod_{i=j+1}^{r_n^+-1} f_{i}\cdot\frac{d\t_{r_n^+-1}}{dx}\\
\\
&:=F_j(x, \t_{j+1}, \cdots, \t_{r_n^+-1})+  \prod_{i=j+1}^{r_n^+-1} f_{i}\cdot\frac{d\t_{r_n^+-1}}{dx}.
\end{array}
\eeq
Similarly, $\frac{d{\t'_j}}{dx}$ can be written as the form
\beq\label{theta'jtotheta'last}
\frac{d{\t'_j}}{dx}=F'_j+  \prod_{i=j+1}^{r_n^+-1} f'_{i}\cdot\frac{d\t'_{r_n^+-1}}{dx}
\eeq
with $F'_j=F'_j(x, \t'_{j+1}, \cdots, \t'_{r_n^+-1})$.

From (\ref{graph}) and the fact that $S(A)\perp U(A)$ for any hyperbolic matrix $A$, it holds that
\beq\label{equvalence}\frac{d\t_0}{dx}=\frac{d\t'_{r_n^+-1}}{dx},\quad \frac{d\t'_0}{dx}=\frac{d{\t_{r_n^+-1}}}{dx}.\eeq

\noindent  From (\ref{equvalence}),(\ref{thetajtothetalast}) and (\ref{theta'jtotheta'last}), we have
$$
\frac{ds_n}{dx}=F_0(x,\t_0(x),\cdots, \t_{
{r_{n}^+}-1}(x))+ \prod_{i= {r_{n}^+}-1}^{0} f_i(\l, x,
\t_i)\cdot \frac{ds'_n}{dx}
$$
and

$$
\frac{ds'_n}{dx}=F_0'(x,\t'_0(x),\cdots, \t'_{
{r_{n}^+}-1}(x))+ \prod_{i= {r_{n}^+}-1}^{0} f'_i(\l, x,
\t'_i)\cdot \frac{ds_n}{dx}.
$$

Thus
\beq\label{expressionds}\left(1-\prod_{i= {r_{n}^+}-1}^{0} f_i\cdot\prod_{i= {r_{n}^+}-1}^{0} f'_i\right)\cdot\frac{ds_n}{dx}=F_0+F'_0\prod_{i= {r_{n}^+}-1}^{0} f_i\eeq
and
\beq\label{expressionds}\left(1-\prod_{i= {r_{n}^+}-1}^{0} f_i\cdot\prod_{i= {r_{n}^+}-1}^{0} f'_i\right)\cdot\frac{ds'_n}{dx}=F'_0+F_0\prod_{i= {r_{n}^+}-1}^{0} f'_i.\eeq

Similarly, from (\ref{thetajtothetalast}) and (\ref{theta'jtotheta'last}), we have
\beq\label{recurentdtheta}\begin{array}{ll}
\frac{d\t_j}{dx}&=\sum_{j\le k\le r^+_n-1} \left(\prod_{j\le  i\le k}f_i\right)b_k(x)+
\prod_{j\le k\le r^+_n-1}f_k\cdot
\frac{d\t_0}{dx}\\
\\
&:=\sum_{j\le k\le r^+_n-1}F_{j,k}(x,\t_{j}, \cdots, \t_{k})+\prod_{j\le k\le r^+_n-1}f_k\cdot
\frac{d\t'_0}{dx}
\\
\\
&:=F_j(x, \t_{j}, \cdots, \t_{r_n^+-1})+\prod_{j\le k\le r^+_n-1}f_k\cdot
\frac{d\t'_0}{dx}
\end{array}
\eeq
and
\beq\label{recurentdtheta'}\begin{array}{ll}
\frac{d\t'_j}{dx}&=\sum_{j\le k\le r^+_n-1} \left(\prod_{j\le  i\le k}f_i'\right)b'_k(x)+\prod_{j\le k\le r^+_n-1}f_k'\cdot
\frac{d\t_0}{dx}\\
\\
&:=\sum_{j\le k\le r^+_n-1}F'_{j,k}(x,\t'_{j}, \cdots, \t'_{k})+\prod_{j\le k\le r^+_n-1}f'_k\cdot
\frac{d\t_0}{dx}\\
\\
&:=F'_j(x, \t'_{j}, \cdots, \t'_{r_n^+-1})+\prod_{j\le k\le r^+_n-1}f'_k\cdot
\frac{d\t_0}{dx}
\end{array}
\eeq
with $b_k=-\phi'(T^k x)$ and $b'_k=\phi'(T^{r_n^+-k-1} x)$.\\

Now we give estimates for $\t_j$ and $\t_j'$.
For convenience, we use multi-index notation
$$
D^{K}F:=\frac{\pa^{k_1}\cdots \pa^{k_{m}} F}{\pa x_1^{k_1}\cdots \pa x_m^{k_m}},
$$
 for function $F=F(x_1, \cdots, x_m)$ where  $K=(k_1, \cdots, k_m)$, $|K|:=k_1+\cdots +k_m.$

\begin{Lemma}\label{lemmapaG}
Let $Y=(y_1,\ \cdots, y_t)$ and $L=(0, l_1, \cdots, l_{t})$.
Assume that $G(\l, Y)$ satisfies that for any $(l_1, \cdots, l_{t})$,
\beq\label{assumptionf}|D^LG(\l, Y)|\le 4^{|L|}\cdot (|L|!)^2\cdot\l^{2|L|}\cdot \|G\|.\eeq
Define $\Theta=(\theta_1, \cdots, \theta_t)$ and
$\Gamma(x)=(\gamma(x+\eta_1), \cdots, \gamma(x+\eta_t))$ with $\eta_i\in \mathbb{R}$.
Then for $\hat{G}(\l, x, \Theta)=G(\l, \Gamma(x)+\Theta)$ and
$\hat{L}=(0, l_0, l_1, \cdots, l_{t})$, we have\begin{equation}\label{daoshufitofs}|D^{\hat L}\hat{G}(\l, x, \Theta)|\le 4^{|\hat L|}\cdot (|\hat L|!)^2\cdot P_{t+|\hat L|}^{|\hat L|}\cdot\|\gamma\|_{l_0}\cdot\l^{2|\hat L|}\cdot \|G\|.
\end{equation}
\end{Lemma}
\begin{Proof}
From the condition (\ref{assumptionf}), we have that
\begin{equation}\label{DKH}\begin{array}{ll}
&|D^{\hat L}\hat{G}(\l, x, \Theta)|\le
\left|\frac{\pa^{l_0}}{\pa x^{l_0}}\left(\frac{\pa^{l_{1}+\cdot +l_t} }{\pa y^{l_{1}}\cdots \pa y^{l_t}}G(\l,\Gamma(x)+\Theta)\right)\right|
\\
\\
&\le \displaystyle{\sum_{\tiny m_{1,1}+\cdots +m_{t, k_t}=l_0}}|\frac{\pa^{k_{1}+l_1+\cdots +k_t+l_t} }{\pa y_1^{k_{1}+l_1}\cdots y_t^{k_t+l_t}}G(\l,\Gamma(x)+\Theta)|\cdot\|\gamma\|_{\mathbb{C}^{m_{1,1}}}\cdots \|\gamma\|_{\mathbb{C}^{m_{t,k_t}}}
\\
\\
&\le 4^{|\hat L|}\cdot (|\hat L|!)^2\cdot P_{t+|\hat L|}^{|\hat L|}\cdot\|\gamma\|_{l_0}\cdot \l^{2|\hat L|}\cdot \|G\|
,\end{array}
\end{equation}
where we use the fact that the number of the terms in the sum is not more than $P_{t+|\hat L|}^{|\hat L|}$ and denote $\|\gamma\|_{\mathbb{C}^{m_{1,1}}}\cdots \|\gamma\|_{\mathbb{C}^{m_{t,k_t}}}$ by $\|\gamma\|_{l_0}$.\\

\noindent

\end{Proof}
\hfill{} \qedbox

\begin{Remark} In the following, for a  function $h=h(x)$, we sometimes denote by
$$
\begin{array}{ll}
 \|h\|_k=\max_{\{k_1+\cdot+k_m=k\}}\prod_{1\le i\le m}\|h\|_{\mathbb{C}^{k_i}}.
\end{array}
$$
It is easy to see that \beq\label{superadditive}\|h\|_{k_1}\cdot \|h\|_{k_2}\le \|h\|_{k_1+k_2}.\eeq\end{Remark}

From Lemmas \ref{sublinearinfty} and \ref{lemmapaG}, we have the following estimates:
\begin{Lemma}\label{lemmapaH}
Let $K=(k_j, \cdots, k_{r_n^+-1})$ with $k_j+\cdots +k_{r_n^+-1}=k$ and
$$\begin{array}{ll}&\Theta(x)=(\t_{j+1}(x), \cdots, \t_{r_n^+-1}(x)),\\
\\
&{\Theta'}(x)=(\t'_{j+1}(x), \cdots, {\t'_{r_n^+-1}}(x)),\end{array}$$
where $\ \t_{j}(x),\ \t'_j(x)$ are defined as above, and $0\le j\le r_{n-1}^+$. Then
we have\begin{equation}\label{daoshufitofsinfty}\begin{array}{ll}
&|D^K(\prod_{i=j}^{r_n^+-1} f_{i})(\l, x, \Theta) |,\ |D^K(\prod_{i=j}^{r_n^+-1}f_i)(\l, x, {\Theta'}) |\le
4^k\cdot (k!)^2\cdot\|\phi_{n}\|_{k_j}\cdot P_{r_n^+-j+k}^k\cdot \l^{-(r_n^+-j)+ 2k}\cdot g_{j}(1)\\
\\
&
|D^KF_{j,i}(\l, x, \Theta) |,\ |D^K{F'_{j,i}}(\l, x, {\Theta'}) |\le
4^k\cdot (k!)^2\cdot\|\phi_{n}\|_{k_j+1}\cdot P_{i-j+k}^k\cdot \l^{-(i-j)+2k}\cdot g_{j}(1).
\end{array}
\end{equation}

\end{Lemma}
\begin{Proof}
From  (\ref{linearmap}) and Lemma \ref{lemmapaG}, we have
\[|D^K(\prod_{i=j}^{r_n^+-1} f_{i})(\l, x, \Theta) |\le 4^k\cdot (k!)^2\cdot \|\phi_n\|_{k_j+1}\l^{2k}\cdot P_{i-j+k}^k |(\prod_{i=j}^{r_n^+-1} f_{i})(\l, x, \Theta)|.
\]
From  Lemma \ref{sublinearinfty}, we know that
\[ |(\prod_{i=j}^{r_n^+-1} f_{i})(\l, x, \Theta)|< \l^{-(i-j)}\cdot  g_{j}(1).\]

Then
\[ |D^K(\prod_{i=j}^{r_n^+-1} f_{i})(\l, x, \Theta) |\le
4^k\cdot (k!)^2\cdot\|\phi_{n}\|_{k_j}\cdot P_{r_n^+-j+k}^k\cdot \l^{-(r_n^+-j)+ 2k}\cdot g_{j}(1).\]

The other estimates in (\ref{daoshufitofs}) can be proved by the same method.

\end{Proof}
\hfill{} \qedbox \vskip 0.3cm
The following element lemma can make the proof of Lemma \ref{newlemma2.3} simpler:
\begin{Lemma}\label{auxily} Let $\l\gg 1$. If for any $r\in \mathbb{N}$, $f_1(x)$ and $f_2(x)$ satisfy
$|\frac{d^r f_i}{dx^r}|\le |\phi|_{r}\cdot r^r\cdot \l^{8r^2}\cdot g_n(r), \quad i=1, 2.$ Then we have
$$
|\frac{d^r (f_1\cdot f_2)}{dx^r}|\le |\phi|_r\cdot r^r\cdot \l^{8r^2}\cdot g_n(r).
$$
\end{Lemma}
\begin{Proof}
This can be easily proved from the definition of $g_n$ (See Lemma \ref{sublinearinfty}) and the definition of  $|\phi|_{r}$ (See (\ref{superadditive})). \hfill{} \qedbox
\end{Proof}\\

The following estimates on the upper bound of derivatives of $\t_j,\ \t'_j$ are important for the proof of
Lemmas \ref{sn-s(n-1)} and \ref{newlemma3.2}.
\begin{Lemma}
\label{newlemma2.3} Let $\l\gg N\gg1$. Then if $n\ge N$, $x\in I_n,\  0< j\le r_n^+-1$ and $1\le r\le l$, it holds that
\beq\label{daoshuthetainfty}
|\t_0|_{\mathbb{C}^r},\ |\t'_0|_{\mathbb{C}^r}\le |\phi_n|_r\cdot r^r\cdot \l^{r^4},\quad
|\t_j|_{\mathbb{C}^r},\ |\t'_j|_{\mathbb{C}^r}\le |\phi_n|_r\cdot r^r\cdot \l^{r^4}\cdot g_j(r).
\eeq
\end{Lemma}

\Proof For $r=1$, from (\ref{equvalence})-(\ref{expressionds}), (\ref{daoshufitofsinfty}) with $j, k=0$ implies that $|\frac{d\t_0}{dx}|,\ |\frac{d\t'_0}{dx}|\le 2|\phi_n|_{1}$,
thus the first part in (\ref{daoshuthetainfty}) is obtained. From (\ref{recurentdtheta}) and (\ref{daoshufitofsinfty})
 we have
$$\begin{array}{ll}
&|\frac{d\t_j}{dx}|\le \sum_{j\le k \le r_n^+-1}\l^{-(k-j)}\cdot   g_{j}(1)\cdot ( 2|\phi_n|_{1})
\\
\\
&\le 4g_{j}(1)\cdot
|\phi_n|_{1}\le |\phi_n|_{1}\cdot \l\cdot g_j(1)\end{array}
$$
if $\l\gg 1$. Similar estimate can be obtained for $\frac{d\t'_j}{dx}$, Hence the proof for the case $r=1$ is finished.

\vskip 0.3cm
Assume (\ref{daoshuthetainfty}) holds true for the case $0<i\le r$. Now we prove the first part of (\ref{daoshuthetainfty}) for the case $r+1$. Later we will consider the second part of it.

Let $L=(r_1, l_0,\cdots ,l_{k})$ and $L_t=(l_{t,1}, \cdots, l_{t, l_t}),\ 0\le t\le k$ with
$r_1+|L_0|+\cdots +|L_{k}|=r$. From (\ref{daoshufitofsinfty}), we have
\beq\label{dr+1H0}\begin{array}{ll}
|\frac{d^{r}F_{0,k}}{dx^{r}}|&\le \sum |D^L F_{0,k}|\cdot |D^{L_0}\t_0|\cdots |D^{L_{k}}\t_{k}|\\
\\
&\le 4^r\cdot (r!)^2\cdot P_{k+r}^r\cdot  \l^{-k+ 2r}\cdot \|\phi_n\|_{r_1+1}\cdot |D^{L_0}\t_0|\cdots |D^{L_{k}}\t_{k}|,
\quad k=0, 1, \cdots.
\end{array}\eeq

From inductive assumptions, one sees that
$$\begin{array}{ll}
|D^{L_t}\t_t|\le |\phi_n|_{l_{t,1}}\cdot l_{t,1}^{l_{t,1}}\cdot \l^{l_{t,1}^4}\cdot
g_t(l_{t,1})\cdot\cdots
|\phi_n|_{l_{t,l_t}}\cdot l_{t,l_t}^{l_{t,l_t}}\cdot \l^{l_{t,l_t}^4}\cdot
g_t(l_{t,l_t}).
\end{array}
$$
Obviously we have $g_j(r_1)\cdot g_j(r_2)\le g_j(r_1+r_2)$.    From the fact that $\displaystyle{\sum_{\tiny\begin{array}{ll}&1\le t\le k\\ &1\le u\le l_t\end{array}}}l_{t,u}=r$, we obtain
$$\begin{array}{ll}
&\displaystyle{\prod_{\tiny\begin{array}{ll}&1\le t\le k\\ &1\le u\le l_t\end{array}}} g_t(l_{t,u})
\le
g_t(\!\!\!\!\!{\displaystyle{\sum_{\tiny\begin{array}{ll}&1\le t\le k\\ &1\le u\le l_t\end{array}}}l_{t,u}})
\le  g_t(r)\le g_k(r).
\end{array}
$$
Similarly, we have
$$
\displaystyle{\prod_{\tiny\begin{array}{ll}&1\le t\le k\\ &1\le u\le l_t\end{array}}}l_{t,u}^{l_{t,u}}\le r^r,
\quad \displaystyle{\prod_{\tiny\begin{array}{ll}&1\le t\le k\\ &1\le u\le l_t\end{array}}}
|\phi_n|_{l_{t,u}}\le |\phi_n|_{r-r_1},
\quad
 \displaystyle{\prod_{\tiny\begin{array}{ll}&1\le t\le k\\ &1\le u\le l_t\end{array}}}\l^{l_{k,l_k}^4}
\le \l^{r^4}.
$$
Thus (\ref{dr+1H0}) implies
$$
|\frac{d^{r}F_{0}}{dx^{r}}|\le (r+1)^{r+1}\cdot \l^{r^4+2r}\cdot |\phi_n|_{r+1}\cdot
 \sum_{1\le j\le r_n^+}4^r\cdot (r!)^2\cdot P_{j+r}^r\cdot  g_j(r)\cdot \l^{-j}.
$$
Thus provided
\beq\label{basicfact}\sum_{1\le j\le r_n^+}4^r\cdot (r!)^2\cdot P_{j+r}^r\cdot g_j(r)\cdot\l^{-j}\le \l^{3r^3},\eeq
we can obtain
$$
|\frac{d^{r}F_{0}}{dx^{r}}|\le \frac{1}{2}(r+1)^{r+1}\cdot \l^{(r+1)^4}\cdot |\phi_n|_{r+1}.
$$

From the definition of $\phi$ in Section 3 and $g_{x}(r)$, we have
$g_{x}(r)\le (2Mx)^{4c_7lr^2}$ with $r\le l$. Thus it holds that
$$
4^r\cdot (r!)^2,\quad {(x+r)}^r, \quad g_x(r)\le \l^{\frac{1}{4}x},
$$
if $x\ge r^3$ and $\l\gg 1$.

 A direct computation shows that
$$\begin{array}{ll}
& \sum_{1\le j\le r_n^+}4^r\cdot (r!)^2\cdot P_{j+r}^r\cdot  g_j(r)\cdot \l^{-j}\\
 \\
 &\le 2\l\int_1^{+\infty}(x+r)^r\cdot 4^r\cdot (r!)^2\cdot g_x(r)\cdot \l^{-x}dx\\
 \\
 &\le 2\l(\int_1^{r^3}(x+r)^r\cdot 4^r\cdot (r!)^2\cdot g_x(r)\cdot  \l^{-x}dx+ \int_{r^3}^{\infty}
  \l^{-\frac14x}dx)\\
 \\
 &\le 2\l(\int_1^{r^3} (40r^4)^{r}\cdot g_{r^3}(r)dx+ 4
 \l^{-\frac14r^3})\\
 \\
 &\le 4\l\cdot (40r^4)^r\cdot r^3\cdot g_{r^3}(r)\le \l^{3r^3}
\end{array}
$$
if $\l\gg 1$.
Thus we have proved (\ref{basicfact}). The same estimate holds true for $\prod_{0\le  i\le r_n^+-1}f_i$ and $\prod_{0\le  i\le r_n^+-1}f_i\cdot
\prod_{0\le  i\le r_n^+-1}f_i'$. By Lemma \ref{auxily}, we get same estimates for $\frac{d^{r+1}\t_0}{dx^{r+1}}$,
The estimate for $\t_0$ is thus finished. The estimate for $\t_0'$ is obtained by the same method.

\noindent Next we estimate $\t_j$, $1\le j\le r_n^+-1$.
From (\ref{daoshufitofsinfty}), we obtain
$$\begin{array}{ll}
&\left|\frac{d^{r+1}F_{j,k}}{dx^{r+1}}\right|\le \sum|D^L F_{j,k}|\cdot |D^{L_j}\t_j|\cdots |D^{L_{k}}\t_{k}|\\
\\
&\le \sum P_{k-j+r}^r\cdot 4^r\cdot (r!)^2\cdot g_j(1)\cdot\l^{2r}\cdot \l^{-(k-j)}\cdot (r+1)^{(r+1)}\cdot \l^{r^4}\cdot|\phi_n|_{r+1}\cdot
g_k(r)
\\
\\
&\le (r+1)^{r+1}\cdot \l^{r^4+2r}\cdot |\phi_n|_{r+1}\cdot
g_j(r+1)[ \l^{-(k-j)}\cdot  P_{k-j+r}^r\cdot 4^r\cdot (r!)^2\cdot g_k(r)\cdot g^{-1}_j(r)
].
\end{array}
$$
It follows that
$$\begin{array}{ll}
\left|\frac{d^{r+1}F_{j}}{dx^{r+1}}\right|
&\le (r+1)^{r+1}\cdot \l^{(r+1)^4}\cdot |\phi_n|_{r+1}\cdot  g_j(r+1)\cdot \\
\\
&\l^{-4r^3}\cdot \sum_{j\le k\le r_n^+-1}[ {\l^{-(k-j)}}\cdot  P_{k-j+r}^r\cdot 4^r\cdot (r!)^2\cdot g_k(r)\cdot g^{-1}_j(r)
].
\end{array}
$$
Similar to the estimate for $\left|\frac{d^{r+1}F_{0}}{dx^{r+1}}\right|$, we can prove that if $\l\gg 1$,
$$
\sum_{j\le k\le r_n^+-1}[ {\l^{-(k-j)}}\cdot  P_{k-j+r}^r\cdot 4^r\cdot (r!)^2\cdot g_k(r)\cdot g^{-1}_j(r)
]\le \l^{4r^3}.
$$




The same estimates hold true for $\prod_{j\le  i\le r_n^+-1}f_i\cdot
\frac{d\t_0}{dx}$. Thus with the help of Lemma \ref{auxily}, we finish the proof for $\t_j$ and the one for $\t'_j$ is similar. Thus we finish the proof for the case $r+1$. This concludes the lemma.
\hfill{}  \qedbox \vskip 0.3cm
\begin{Remark}
The above estimate still hold true if $\phi_n$ in section 3 is replaced by $\phi_n$ in section 5.
\end{Remark}







\vskip 0.4cm \noindent
\subsection{The proof of Lemma \ref{sn-s(n-1)}}

We only give the proof for the first part of Lemma \ref{sn-s(n-1)}, the second part can be proved by same method.
The following estimates will be used later.

\noindent
\begin{Lemma}\label{tttt}
For $\ 0\le j\le {r_{n}^+-1}$,
it holds that \beq\label{manyorder} |{\bar\t}_j-\theta_j|\le \lambda^{-2
{r_{n-1}^+}(1-3\e)+2j}.\eeq
\end{Lemma}

\begin{Proof}
From Lemma \ref{Younglemma1}
 we have
\beq\label{initialdifference}|\bar{s}_n-s_{n-1}|\le \mu^{-2r_{n-1}^+(1-\e)+3\e}\le
\l^{-2r_{n-1}^+(1-3\e)}.\eeq

Recall that for any linear map $L:\mathbb{R}^2\rightarrow \mathbb{R}^2$ and $w\in \mathbb{R}^2$
with $|w|=1$, it holds that $|(D\bar{L})_{\bar
w}|=\frac{1}{|Lw|^2}$, where $w\in {\rm RP^1}$ corresponds to $w$. From the fact that
$\|A(x)\|=\l$ for any $x\in \mathbb{S}^1$, we have $|(D\bar{A})_{\bar
w}|\le\l^2$ for any $w\in \mathbb{R}^2$
with $|w|=1$.

From the definition of $\t_j,\ \bar{\t}_j$, we know that $\theta_{j}=\overline{A_{n-1}(x_{j-1})}\ \theta_{j-1}$ and
${\bar\t}_{j}=\overline{A_{n-1}(x_{j-1})}\ {\bar\t}_{j-1}$. Moreover, $\t_0=s_{n-1}$ and $\bar{\t}_0=\bar{s}_n$.
Thus
$$
|{\bar\t}_j-\theta_j|=|\bar{\Lambda}\cdot (\phi_{n-1}(x_{j-1})+\bar{\t}_{j-1})-\bar{\Lambda}\cdot (\phi_{n-1}(x_{j-1})+{\t}_{j-1})|
\le \|D\bar{\Lambda}\|\cdot|{\bar\t}_{j-1}-\theta_{j-1}|\le \lambda^{2}\cdot|{\bar\t}_{j-1}-\theta_{j-1}|
$$

From (\ref{initialdifference}), we then obtain
$$
|{\bar\t}_j-\theta_j|\le \lambda^{2}\cdot|{\bar\t}_{j-1}-\theta_{j-1}|\le \cdots \le
\lambda^{2j}\cdot|{\bar\t}_0-\theta_0|\le  \lambda^{-2
{r_{n-1}^+}(1-3\e)+2j}.
$$
\end{Proof}
\qedbox
\vskip 0.2cm

Similar to (\ref{recurrencedthetadx}), we have
$$
\begin{array}{ll}\frac{d\tilde{\t}_j}{dx}&=f(\l, \tilde{\t}_{j+1}+\phi_{n-1}(x_{j+1}))\cdot
(\frac{d\tilde{\t}_{j+1}}{dx}
+\phi_{n-1}'(x_{j+1})),\\
\\
\frac{d\tilde{\bar\t}_j}{dx}&=f(\l, \tilde{\bar\t}_{j+1}+\phi_{n-1}(x_{j+1}))\cdot
(\frac{d\tilde{\bar\t}_{j+1}}{dx}
+\phi_{n-1}'(x_{j+1})),\end{array}
$$
where $\tilde{\t}_j=\frac{\pi}{2}+\t_{j}-\phi_{n-1}(x_{j})$ and $\tilde{\bar{\t}}_j=\frac{\pi}{2}+\bar{\t}_{j}-\phi_{n-1}(x_{j})$ with $\t_0=s_{n-1}$ and $\bar{\t}_0=\bar{s}_n$.
For convenience, we will still use notations $\t_j$ and $\bar{\t}_j$ to denote $\tilde{\t}_j$ and $\tilde{\bar{\t}}_j$. Then we obtain,
 for $ \ 0\le j\le r_{n-1}^+-2$ and $1\le s\le r_{n-1}^+-1-j$,
$$
\begin{array}{ll}&\frac{d\t_{j}}{dx}
=\sum_{i=j+1}^{j+s}\prod_{t=j+1}^i f_{t}\cdot \phi_{n-1}'(x_{i})+ \prod_{t=1}^s f_{j+t}\cdot \frac{d\t_{j+s}}{dx}\\
\\
&:=\sum_{i=j+1}^{j+s}H_{j,i}(x, \t_{j+1}, \cdots, \t_{j+s})+  \prod_{t=1}^s f_{j+t}\cdot\frac{d\t_{j+s}}{dx}\\
\\
&:=H_j(x, \t_{j+1}, \cdots, \t_{j+s})+  \prod_{t=1}^s f_{j+t}\cdot\frac{d\t_{j+s}}{dx},
\end{array}
$$
where $f_t=f(\l, {\t}_{t}+\phi_{n-1}(x_{t}))$.

Similarly, $\frac{d\bar{\t}_j}{dx}$ can be written as the form
$$
\frac{d\bar{\t}_j}{dx}=\bar{H}_j+ \prod_{t=1}^s \bar{f}_{j+t}\cdot \frac{d\bar{\t}_{j+s}}{dx}
$$
with $\bar{H}_j=H_j(x, \bar{\t}_{j+1}, \cdots, \bar{\t}_{j+s})$.




\vskip 0.2cm
To prove Lemma \ref{sn-s(n-1)}, it is sufficient to prove

\noindent \begin{Lemma}\label{tt'error}\quad
Let $0\le  k\le\min\{l,  r_{n-1}^{+\frac 1{10}}\},\  0\le j\le \frac12r_{n-1}^{+}, s=[(r_{n-1}^+)^{\frac23}]$ with $n\gg 1$. Then it holds that
\begin{equation}\label{inductiveerror}
|\frac{d^k\t_j}{dx^k}-\frac{d^k\bar{\t}_j}{dx^k}|\le\|\phi_{n-1}\|_k\cdot (r_{n-1}^+)^{4k^2}\cdot \l^{-s+2k}\cdot k^k\cdot \l^{k^4}\cdot g_{s+j}(k).
 \eeq\end{Lemma}

\noindent
Remark. {\it Note
that $\t_0=s_{n-1}$ and $\bar{\t}_0=\bar{s}_n$, Lemma \ref{sn-s(n-1)} follows from Lemma \ref{tt'error} by taking $j=0$, where we use the fact that $g_{s+j}(k)\le (2Mr_{n-1}^+)^{4c_7(l+1)^3}\le \l^{\frac13 s}$ if $\l, n\gg 1$.}

\vskip 0.5cm
\noindent {\it Proof of Lemma \ref{tt'error}}\quad The proof for the case $k=0$ can be obtained by lemma \ref{tttt}.

\noindent For the case $k>0$, from Lemma \ref{newlemma2.3},
 one sees that, for $K=(K_{j+1}, \cdots, K_{j+s})$ with $K_{i}=(k_{i,1},\cdots, k_{i, l_i}),\ j+1\le i\le j+s$,
 \beq\label{DKtheta}\begin{array}{ll}
 |D^{K_{i}}\t_i|\le \prod_{t=1}^{l_i} k_{i, t}^{k_{i,t}}\cdot \l^{k_{i,t}^4}\cdot \|\phi_{n-1}\|_{k_{i,t}}\cdot g_{i}(k_{i,t})
 \le |K_{i}|^{|K_{i}|}\cdot \l^{|K_{i}|^4}\cdot \|\phi_{n-1}\|_{|K_{i}|}\cdot g_i(|K_i|),\end{array}
 \eeq
 which, together with (\ref{daoshufitofsinfty}), implies
$$\begin{array}{ll}
&|\frac{d^{k-1}}{dx^{k-1}}(\prod_{i=1}^s f_{j+i})|\\
\\
&\le \displaystyle{\sum_{\tiny\begin{array}{ll}&L=(k_0, l_1, \cdots, l_s)\\
&k_0+k_{1,1}+\cdots +k_{s,l_s}=k-1\end{array}}} |D^L(\prod_{i=1}^sf_{j+i})
 \cdot D^{k_{j+1}}\t_{j+1}\cdots D^{k_{j+s}}\t_{j+s}|\\
\\
&\le \displaystyle{\sum_{\tiny\begin{array}{ll}&L=(k_0, l_1, \cdots, l_s)\\
&k_0+k_{1,1}+\cdots +k_{s,l_s}=k-1\end{array}}}\!\!\!\!\!\!\!\!\!\!\!\!\!\!\!\! 4^{|L|}\cdot ({|L|}!)^2\cdot \|\phi_{n-1}\|_{k_0}\cdot P_{s+{|L|}}^{|L|} \cdot\l^{-s+2{|L|}} g_{j+s}(1) \cdot\\
\\
&\hskip 0.8cm (k-k_0)^{k-k_0} \cdot \l^{(k-k_0)^4}\cdot \|\phi_{n-1}\|_{k-k_0}\cdot g_{j+s}(k-1-k_0)
\\
\\
&\le P_{s+1+k}^k\cdot 4^k\cdot (k!)^2\cdot k^k\cdot \l^{k^4}\|\phi_{n-1}\|_k\cdot P_{s+k}^k \cdot\l^{-s+2k}\cdot g_{j+s}(k)
\\
\\
&\le  (s+1+k)^{2k}\cdot 4^k\cdot (k!)^2\cdot k^k\cdot \l^{k^4}\cdot \|\phi_{n-1}\|_k \cdot\l^{-s+2k}\cdot g_{j+s}(k).
\end{array}
$$
Since $k+s\ll r_{n-1}^{+}$,  the above estimates imply that, for $n\gg 1$,
\beq\label{remainder}
|\frac{d\t_j}{dx}-H_j|_{\mathbb{C}^k},\ |\frac{d\bar{\t}_j}{dx}-\bar{H_j}|_{\mathbb{C}^k}\le \frac{1}{2}\cdot r_{n-1}^{+4k^2}\cdot k^k\cdot \l^{k^4}\cdot \|\phi_{n-1}\|_k \cdot\l^{-s+2k}\cdot g_{j+s}(k).
\eeq
 Hence to prove Lemma \ref{tt'error}, it is sufficient to estimate $|\frac{d^k\bar{H}_j}{dx^k}-\frac{d^k{H_j}}{dx^k}|$.\\

Assume (\ref{inductiveerror}) holds for  $k$. We now prove (\ref{inductiveerror}) holds for  $k+1$.
Let $J_j$ be the set for all the pairs $(S_j,\ K_t)$ such that $S_j=(s_j, \cdots, s_{j+ s}),
K_t=(k_{t,1}, \cdots, k_{t,s_t})$ with $0\le j\le \frac 1{2} r_{n-1}^{+}, \ j+1\le t\le j+s,  \ k_{t,1}, \cdots,k_{t,s_t}\ge 1$ and $ s_j+|K_{j+1}|+\cdots +|K_{j+ s}|=k$. Then
we have
$$\begin{array}{ll}
&|\frac{d^{k}H_j}{dx^{k}}-\frac{d^{k}\bar{H}_j}{dx^{k}}|\\
\\
&\le \sum_{J_j}|D^{S_j}\bar{H}_j\cdot D^{K_{j+1}}\bar{\t}_{j+1}\cdots D^{K_{j+ s}}\bar{\t}_{j+ s}
-D^{S_j}{H}_j\cdot D^{K_{j+1}}{\t}_{j+1}\cdots D^{K_{j+ s}}{\t}_{j+ s}|\\
\\
&\le \sum_{J_j} \left(|D^{S_j}(\bar{H}_j-H_j)\cdot D^{K_{j+1}}{\t}_{j+1}\cdots D^{K_{j+ s}}{\t}_{j+ s}|\ +\right.\\
\\
&\ \ \ \left.\sum_{1\le t\le s}|D^{S_j}{H}_j\cdot D^{K_{j+1}}{\t}_{j+1}\cdots D^{K_{j+t}}(\bar{\t}_{j+t}-\t_{j+t})\cdots D^{K_{j+ s}}\bar{\t}_{j+ s}|\right)\\
\\
&:=\sum_{J_j}(E_0+\sum_{1\le t\le s}E_t).
\end{array}
$$
From (\ref{DKtheta}), we have
$$
\begin{array}{ll}
&|D^{K_{j+1}}{\t}_{j+1}\cdots D^{K_{j+ s}}{\t}_{j+ s}|\le k^k\cdot\l^{k^4}\cdot g_{j+s}(k)\cdot \|\phi_{n-1}\|_{k-s_j}.
\end{array}
$$

Let $e_1=(1, 0, \cdots, 0), \cdots, e_t=(0, \cdots, 1, \cdots, 0), \cdots, e_{ s}=(0,
\cdots, 1)$. Then from (\ref{daoshufitofs}) and (\ref{manyorder}),
$$
\begin{array}{ll}
&|D^{S_j}(\bar{H}_{ji}-H_{ji})|\le \sum_{1\le t\le s}\|D^{S_j+e_t}H_{ji}\|\cdot |\bar{\t}_{j+t}-\t_{j+t}|\\
\\
&\le \sum_{1\le t\le s}4^{|S_j|+1}\cdot ({(|S_j|+1)}!)^2\cdot \|\phi_{n-1}\|_{|S_j|+1}
\cdot P_{i-j+{|S_j|+1}}^{|S_j|+1}\cdot\l^{-(i-j)+2(|S_j|+1)}\cdot g_j(1)\cdot \l^{-2(r_{n-1}^+(1-3\e)-(j+t))}\\
\\
&\le
 \|\phi_{n-1}\|_{|S_j|+1}\cdot (8s(|S_j|+1))^{|S_j|+3}\cdot\l^{-r_{n-1}^++2(k+1)}\cdot g_j(1).
\end{array}
$$
In the above, we use the fact thats $j+t<\frac{2r_{n-1}^+}{3}$ and $\e<\frac 1{10},\ \l\gg 1$.


Consequently, we obtain
$$\begin{array}{ll}
E_0&\le    \|\phi_{n-1}\|_{k+1}\cdot  (k+1)^{k+1}\cdot \l^{(k+1)^4}\cdot (r_{n-1}^+)^{k+4}\cdot\l^{-r_{n-1}^++2(k+1)}
\cdot g_{j+s}(k+1).\end{array}
$$
\vskip 0.2cm
From (\ref{daoshuthetainfty}) and the inductive assumption for the case $k$, we have the following estimate:
$$
\begin{array}{ll}
&|D^{K_{j+t}}(\bar{\t}_{j+t}-\t_{j+t})|\le |\frac{\pa^{k_{j+t,1}}\bar{\t}_{j+t}}{\pa x^{k_{j+t,1}}}\cdots \frac{\pa^{k_{j+t,s_{j+t}}}\bar{\t}_{j+t}}{\pa x^{k_{j+t,s_{j+t}}}}-\frac{\pa^{k_{j+t,1}}\t_{j+t}}{\pa x^{k_{j+t,1}}}\cdots \frac{\pa^{k_{j+t,s_{j+t}}}\t_{j+t}}{\pa x^{k_{j+t,s_{j+t}}}}|\\
\\
&\le \sum_{1\le m\le s_{j+t}}\left| \frac{\pa^{k_{j+t,1}}{\t}_{j+t}}{\pa x^{k_{j+t,1}}}\cdots \frac{\pa^{k_{j+t,m}}(\bar{\t}_{j+t}-\t_{j+t})}{\pa x^{k_{j+t,m}}}
\cdots\frac{\pa^{k_{j+t,s_{j+t}}}\bar{\t}_{j+t}}{\pa x^{k_{j+t,s_{j+t}}}}\right|\\
\\
&\!\!\!\!\!\!\!\le \sum_{1\le m\le s_{j+t}} \left(\prod_{r\not=m}
(k_{j+t, r})^{k_{j+t, r}}\cdot \l^{k_{j+t, r}^4}\cdot g_{j+t}(k_{j+t, r})\cdot \|\phi_{n-1}\|_{k_{j+t, r}}\right)\cdot
\\
\\
&\|\phi_{n-1}\|_{k_{j+t,m}}\cdot (r_{n-1}^+)^{4k^2_{j+t,m}}\cdot \l^{-s+2k_{j+t,m}}\cdot
(k_{j+t,m})^{k_{j+t,m}}\cdot \l^{k_{j+t,m}^4}\cdot g_{j+t}(k_{j+t,m})\\
\\
&\le |K_{j+t}|^{|K_{j+t}|}\cdot \l^{|K_{j+t}|^4} \|\phi_{n-1}\|_{|K_{j+t}|}\cdot (r_{n-1}^+)^{4k^2_{j+t,m}}\cdot \l^{-s+2k_{j+t,m}}\cdot g_{j+t}(|K_{j+t}|).
\end{array}
$$
It, together with (\ref{DKtheta}) and (\ref{daoshuthetainfty}), implies if $k< \min\{l,  r_{n-1}^{+\frac 1{10}}\}$,
$$
\begin{array}{ll}
E_t&\le s\cdot 4^{|S_j|}\cdot ({|S_j|}!)^2\cdot \|\phi_{n-1}\|_{|S_j|}
\cdot P_{s+{|S_j|}}^{|S_j|}\cdot\l^{2|S_j|}\cdot g_j(1)\cdot
\\
\\
&\|\phi_{n-1}\|_{\tiny k+1-|K_{j+t}|}\cdot (k+1-|K_{j+t}|)^{k+1-|K_{j+t}|}\cdot
\l^{(k+1-|K_{j+t}|)^4}\cdot g_{j+s}(k+1-|K_{j+t}|)\cdot \\
\\
&|K_{j+t}|^{|K_{j+t}|}\cdot \l^{|K_{j+t}|^4} \|\phi_{n-1}\|_{|K_{j+t}|}\cdot (r_{n-1}^+)^{4k^2_{j+t,m}}\cdot \l^{-s+2k_{j+t,m}}\cdot g_{j+t}(|K_{j+t}|)
\\
\\

&\hskip -0.6cm\le\|\phi_{n-1}\|_{k+1}\cdot (8(s+k+1))^{4|S_j|}\cdot (r_{n-1}^+)^{4k^2_{j+t,m}}\cdot
(k+1)^{k+1}\cdot\l^{(k+1)^4}\cdot g_{j+s}(k+1)\cdot \l^{-s+2k_{j+t,m}+2|S_j|}.
\end{array}
$$
Since $k_{i+t,r}\ge 1$ for any $t, r$, it holds that
$$
k_{i+t, m}+s_{i+t}-1\le k_{i+t, m}+\displaystyle{\sum_{1\le r\le s_{i+t}, r\not=m}}k_{i+t, r}=|K_{i+t}|.
$$
Moreover, for any $1\le u\le s$, $s_{i+u}\le |K_{i+u}|$.
Consequently, from $s_i+\sum_{1\le u\le s}|K_{i+u}|\le k$, we have
$$\begin{array}{ll}
|S_i|+k_{i+t, m}-1&=s_i+\displaystyle{\sum_{1\le u\le s, u\not=t}}s_{i+u}\ +s_{i+t}+k_{i+t, m}-1\\
\\
&\le
s_i+\displaystyle{\sum_{1\le u\le s, u\not=t}}|K_{i+u}| +|K_{i+t}|\\
\\
&\le s_i+\displaystyle{\sum_{1\le u\le s}}|K_{i+u}|\le k.\end{array}
$$

Thus from the fact that $8(s+k+1)<r_{n-1}^+$ and $|S_j|, k_{i+t, m}\le k$, we have
$$
E_t\le \|\phi_{n-1}\|_{k+1}\cdot (r^+_{n-1})^{4(k^2+k)}\cdot (k+1)^{k+1}\cdot\l^{(k+1)^4}\cdot g_{j+s}(k+1)\cdot \l^{-s+2(k+1)}.
$$
Then
$$\begin{array}{ll}
&\sum_{J_i}(E_0+\sum_{1\le t\le s}E_t)\\
\\
&\le  P_{s+k}^{k}\cdot[\|\phi_{n-1}\|_{k+1}\cdot  (k+1)^{k+1}\cdot \l^{(k+1)^4}\cdot (r_{n-1}^+)^{k+4}\cdot\l^{-r_{n-1}^++2(k+1)}
\cdot g_{j+s}(k+1)
\\
\\
&+s\|\phi_{n-1}\|_{k+1}\cdot (r^+_{n-1})^{4(k^2+k)}\cdot (k+1)^{k+1}\cdot\l^{(k+1)^4}\cdot g_{j+s}(k+1)\cdot \l^{-s+2(k+1)}]
\\
\\
&\le \frac12\|\phi_{n-1}\|_{k+1}\cdot (r_{n-1}^+)^{4(k+1)^2}\cdot \l^{-s+2(k+1)}\cdot (k+1)^{k+1}\cdot \l^{(k+1)^4}\cdot g_{s+j}(k+1),\end{array}
$$
where we use the fact that $s\cdot P_{s+k}^k\le (s+k)^{k+1}\le r_{n-1}^{+4k}.$
 It, together with (\ref{remainder}),  implies (\ref{inductiveerror}) for the case $k+1$. Thus we finish the proof of Lemma \ref{tt'error}.

\hfill{} \qedbox


\noindent \subsection{Proof of Lemma \ref{existfn} and \ref{daoshuphi0} }

{\it Proof of Lemma \ref{existfn}}
\quad Define
$$
\psi(x)=\left\{\begin{array}{ll} e^{-\frac{1}{x^2}},& x>0\\
0,& x\le 0.
\end{array}\right.
$$
Let
$$
w_1=\left\{\begin{array}{ll} w_0(x), & x\le 0\\
w_0(-x), & x>0,\end{array}\right.
$$
where  $w_0(x)=\frac{\psi(x+2)}{\psi(x+2)+\psi(-x-1)}$.

Then we define $f_n$ be a $\pi$-periodic function such that
$$
f_n(x)= w_1(10q_n^2(x-c_1)),\quad
x\in [c_1-\frac{\pi}{2}, c_1+\frac{\pi}{2}].$$
We will check $f_n$ satisfy (\ref{piecewisefn}) and (\ref{dfdx}). Without loss of generality, we
assume $x-c_1\le 0$.
Then
\begin{equation}\label{compositfn} f_n(x)= \frac{\psi(10q_n^2(x-c_1)+2)}{\psi(10q_n^2(x-c_1)+2)+\psi(-10q_n^2(x-c_1)-1)}.
\end{equation}
If in addition $|x-c_1|\le \frac{1}{10q^2_n}$, then $-1\le -10q_n^2(x-c_1)-1\le 0$. Thus $\psi(-10q_n^2(x-c_1)-1)=0$,
which implies $f_n(x)=1$.

For $x\in [\frac{\pi}{2}-c_1, \frac{\pi}{2}+c_1]\backslash {I_n}$, $|10q_n^2(x-c_1)|\ge 10.$ Then for $x-c_1\le 0$, it holds that
$10q_n^2(x-c_1)+2\le -8,$ which implies
$\psi(10q_n^2(x-c_1)+2)=0$. Hence $f_n(x)=0$.

Combining these with the fact that $0\le w_0(x)\le 1$ for any $x$, we obtain (\ref{piecewisefn}).

To deal with (\ref{dfdx}), we first estimate $\psi^{(r)}(x)$ for $r\in \mathbb{N}$. Obviously, $\psi^{(r)}(0)=0.$ For $x\not=0$,
by direct computations, we have
\beq\label{daoshupsir}\begin{array}{ll}
|\psi^{(r)}(x)|&=|\sum_{l_1+\cdots +l_s=r}e^{-\frac{1}{x^2}}\cdot (-x^{-2})^{(l_1)}\cdots  (-x^{-2})^{(l_s)}|\\
\\
&\le \sum_{l_1+\cdots +l_s=r}(l_1+1)!\cdots (l_s+1)!\cdot e^{-\frac{1}{x^2}}\cdot x^{-(2s+l_1+\cdots +l_s)}\\
\\
&\le r!\cdot (2r)!\cdot e^{-\frac{1}{x^2}}\cdot x^{-3r}\le  ((2r)!)^{2}e^{-\frac{1}{x^2}}\cdot x^{-3r}.
\end{array}
\eeq
In the last inequality, we use the facts that the number of terms in the sum is not more than $r!$
and that $k_1!\cdot k_2!\le (k_1+k_2)!$.

Next we estimate the maximum of the function $\psi_r(x)=e^{-\frac{1}{x^2}}\cdot x^{-3r}$ for $x>0$.

Let
$$
\psi'_r(x)=(2x^{-3}-3rx^{-1})\cdot \psi_r(x)=0.$$
We obtain the unique extreme point
\beq\label{xstarr}x^*_r=(\frac{2}{3r})^{\frac{1}{2}}.\eeq Since $\psi_r(x)\rightarrow 0$ as $x$ tends to
$0$ or $\infty$, $x^*_r$ is the unique maximum point for $\psi_r$ on $x>0$. It is easy to see that
$$\begin{array}{ll}
|\psi_r(x^*_r)|=e^{-\frac{3r}{2}}\cdot (\frac{2}{3r})^{-\frac{3}{2}r}\le r^{2r}.
\end{array}
$$
Thus we obtain \beq\label{derivativepsi}|\psi^{(r)}(x)|\le ((2r)!)^{2}\cdot r^{2r}\le (2r)^{6r}.\eeq

From the definition, we have $f_n^{(r)}(x)=(10q_n^2)^r\cdot w_0^{(r)}(y)$
with $y=10q_n^2(x-c_1)$. From the fact that $w_1$ is even,
we only need to consider $y\le 0$.  for $y\le -2$,
$\psi(y+2)=0$ or equivalently $w_0(y)=0$, it is sufficient to consider the situation $-2 \le y\le 0$.

If $y\in [-2, -\frac{3}{2}], $ it holds that $-y-1\in [\frac{1}{2}, 1]$, which implies that $\psi(-y-1)\ge
\min_{y\in [\frac 12, 1]}$. Otherwise, if $y\in [-\frac{3}{2}, 0], $ we have $y+2\in [\frac 12, 2]$, then
$\psi(y+2)\ge \min_{y\in [\frac 12, 2]}\psi(y)$. In conclusion, we obtain
\beq\label{denominator} \min_{y\in [-2, 0]} (\psi(y+2)+\psi(-y-1))\ge \min_{y\in [\frac 12, 2]}\psi(x) =e^{-4}.\eeq

Thus
$$\begin{array}{ll}
|w_0^{(r)}(y)|\le \sum_{|R|=r}|\psi_R(y)|,
\end{array}
$$
where $R=(r_1, l_1, \cdots, l_s)$ and
$$
\psi_R(y)=(\psi_2+\psi_{-1})^{-(1+s)}\cdot\psi^{(r_1)}_2\cdot
(\psi_2+\psi_{-1})^{(l_1)}\cdots (\psi_2+\psi_{-1})^{(l_s)}
$$
with $\psi_2=\psi(y+2)$ and $\psi_{-1}=\psi(-y-1)$.

From (\ref{derivativepsi}) and (\ref{denominator}), we have
$$
\begin{array}{ll}
|\psi_R|&\le e^{4(1+s)}\cdot (2r_1)^{6r_1}\cdot  2^s\cdot\prod_{1\le i\le s} (2l_i)^{6l_i}\\
\\
&\le e^{4(1+s)}\cdot  2^s\cdot (2r)^{6r}\le (8r)^{6r}.
\end{array}
$$
Thus $|w_0^{(r)}(y)|\le (r+1)!\cdot (8r)^{6r}$, which leads that
$$|f_n^{(r)}(x)|\le (10\cdot q_n)^{2r}\cdot  (r+1)!\cdot (8r)^{6r}
\le (q_n)^{2r}\cdot (8r)^{8r}\le (q_n)^{3r}$$ if $r\le
[q_n^{\frac{1}{10}}]$.\hfill{}\qedbox
\vskip 0.3cm
\noindent{\it Proof of Lemma \ref{daoshuphi0}}\quad
Similar to (\ref{daoshupsir}) in the proof of Lemma \ref{existfn}, we obtain that
$$
|\phi_0^{(r)}(x)|\le  ((2r)!)^{2}e^{-\frac{1}{x^a}}\cdot x^{-3r}.
$$
From (\ref{xstarr}), $x^*_r=(\frac{2}{3r})^{\frac{1}{2}}$ is the unique extreme point for the function
$e^{-\frac{1}{x^2}}\cdot x^{-3r}$. Since for $0\le r\le[q_{n}^{a}]$, it holds that $x^*_r>q_n^{-2}$. Thus on $I_n$, if $n\gg 1$,
we have $$|\phi_0^{(r)}(x)|\le ((2r)!)^{2}e^{-\frac{1}{q_n^{-2a}}}\cdot q_n^{6r}\le e^{-\frac{q_n^{2a}}{4}}.$$

\hfill{} \qedbox

\subsection{Proof of Lemma \ref{newlemma3.2}}

 First we have the following estimate for $\|\phi_n\|_{k}$.

 \begin{Lemma}\label{boundforphin}
 For any $n, k\in \mathbb{N}$ with $n\ge N$, it holds that
 \beq\label{boundphi_n}
 \|\phi_n\|_{k}\le  ((3k)!\cdot k^k\cdot \l^{k^4})^{n-N+1}\cdot \prod_{t=N}^{n} q_t^{3k}\cdot\|\phi_0\|_{k}.
 \eeq
 \end{Lemma}
 \begin{Proof}
 Let $L_k$ be the set for all integer vectors $K=(k_{1}, \cdots, k_{m})$ with $m\ge 1,\ k_{1}, \cdots,k_{m}\ge 1$ and $|K|=k$.

 For $n=N$, from (\ref{superadditive}) and Lemmas \ref{existfn} and \ref{newlemma2.3} it is easy to see that
$$\begin{array}{ll} \|\phi_N\|_{k}&=\max_{K\in L_k}\prod_{1\le i\le m}\|\phi_N\|_{\mathbb{C}^{k_i}}
\le \max_{K\in L_k}\prod_{1\le i\le m}(\|\phi_0\|_{\mathbb{C}^{k_i}}+\|\phi_N-\phi_0\|_{\mathbb{C}^{k_i}})\\
\\ &\le \max_{K\in L_k}\prod_{1\le i\le m}( \|\phi_0\|_{\mathbb{C}^{k_i}}+\|(s_N-s_0)\cdot f_N\|_{\mathbb{C}^{k_i}})\\
\\&\le \max_{K\in L_k}\prod_{1\le i\le m}(\|\phi_0\|_{\mathbb{C}^{k_i}}+\|\phi_0\cdot f_N\|_{\mathbb{C}^{k_i}}+\|s_N\cdot f_N\|_{\mathbb{C}^{k_i}})\\
 \\
 &\le \max_{K\in L_k}\prod_{1\le i\le m}(1+(k_i+1)!\cdot q_N^{3{k_i}}(1+k_i^{k_i}\cdot \l^{k_i^4}))\|\phi_0\|_{k_i}
 \\
 \\
 &\le \max_{K\in L_k}\prod_{1\le i\le m}(k_i+2)!\cdot q_N^{3{k_i}}\cdot k_i^{k_i}\cdot \l^{k_i^4}\|\phi_0\|_{k_i}

 \\
 \\
 &\le (3k)!\cdot k^k\cdot \l^{k^4}\cdot q_N^{3k}\cdot\|\phi_0\|_{k}.
 \end{array}
 $$

 Thus we prove (\ref{boundphi_n}) for the case $n=N$.

 Assume (\ref{boundphi_n} holds true for the cases $N,
 \cdots, n$, we will prove it holds for the case $n+1$.

 From (\ref{superadditive}), the inductive assumption and Lemmas \ref{existfn} and \ref{newlemma2.3}, we have
 $$\begin{array}{ll} &\|\phi_{n+1}\|_{k}=\max_{K\in L_k}\prod_{1\le i\le m}\|\phi_{n+1}\|_{\mathbb{C}^{k_i}}
\le \max_{K\in L_k}\prod_{1\le i\le m}(\|\phi_n\|_{\mathbb{C}^{k_i}}+\|\phi_{n+1}-\phi_n\|_{\mathbb{C}^{k_i}})\\
\\ &\le \max_{K\in L_k}\prod_{1\le i\le m}( \|\phi_n\|_{\mathbb{C}^{k_i}}+\|(s_{n+1}-s_n)\cdot f_{n+1}\|_{\mathbb{C}^{k_i}})\\
\\&\le \max_{K\in L_k}\prod_{1\le i\le m}(\|\phi_n\|_{\mathbb{C}^{k_i}}+\|s_n\cdot f_{n+1}\|_{\mathbb{C}^{k_i}}+\|s_{n+1}\cdot f_{n+1}\|_{\mathbb{C}^{k_i}})\\
 \\
 &\le \max_{K\in L_k}\prod_{1\le i\le m}(\|\phi_n\|_{k_i}+(k_i+1)!\cdot q_{n+1}^{3k_i}\cdot \|s_n\|_{\mathbb{C}^{k_i}}
 +(k_i+1)!\cdot q_{n+1}^{3k_i}\cdot \|s_{n+1}\|_{\mathbb{C}^{k_i}})
 \\
 \\
 &\le \max_{K\in L_k}\prod_{1\le i\le m}(\|\phi_n\|_{k_i}+(k_i+1)!\cdot q_{n+1}^{3k_i}\cdot k_i^{k_i}\cdot \l^{k_i^4}\|\phi_{n-1}\|_{k_i}
 +(k_i+1)!\cdot q_{n+1}^{3k_i}\cdot k_i^{k_i}\cdot \l^{k_i^4}\|\phi_{n}\|_{k_i})
 \\
 \\
 &\le \max_{K\in L_k}\prod_{1\le i\le m} 3(k_i+1)!\cdot q_{n+1}^{3k_i}\cdot k_i^{k_i}\cdot \l^{k_i^4}
 (\|\phi_{n}\|_{k_i}+\|\phi_{n-1}\|_{k_i})
 \\
 \\&\le \frac12\cdot(3k)!\cdot k^k\cdot \l^{k^4}\cdot q_{n+1}^{3k}\cdot (\|\phi_{n}\|_{k_i}+\|\phi_{n-1}\|_{k_i})\le ((3k)!\cdot k^k\cdot \l^{k^4})^{n-N+2}\cdot \prod_{t=N}^{n+1} q_t^{3k}\cdot\|\phi_0\|_{k}.
 \end{array}
 $$


Thus we complete the proof.
 \end{Proof}
 \hfill{} \qedbox \vskip 0.3cm




\vskip 0.3cm

Now we prove Lemma \ref{newlemma3.2}.
For any fixed $k\ge 1$, we take  $n_0(k)$ such that $(r^{+}_{n-1})^{\frac{1}{10}}>k$ if $n\ge n_0(k)$.

From the definition of $\phi_0$ in section 5,  it follows that
$$
|g_{s+i}(k)|\le \exp(4M^2 (s+i)^{+2a}\cdot{2c_7k^2})\le \l^{2(s+i)^{2a}\cdot k^2}
$$ if $\l, n\gg 1$.
 Since $2a<1$, from the definition of $s$ and $k$, we know that
$$
(r_{n-1}^+)^{3k(r_{n-1}^+)^{\frac{1}{10}}}\cdot \l^{2k+2s^{2a}\cdot k^2}\cdot k^k\cdot \l^{k^4}\le
\l^{\frac14 s}\le \l^{\frac13 (r_{n-1}^+)^{\frac23}}.\
$$
if $\l, n\gg 1$.

Taking $i=0$ and $l=\infty$ in Lemma $ \ref{tt'error}$, we have
$$\begin{array}{ll}
&|\frac{d^ks_n}{dx^k}-\frac{d^ks_{n-1}}{dx^k}|=|\frac{d^k\t_0}{dx^k}-\frac{d^k\bar{\t}_0}{dx^k}|\\
\\
&\le
\|\phi_{n-1}\|_k\cdot \l^{\frac13 (r_{n-1}^+)^{\frac23}}\cdot \l^{-s}\\
\\
&\le \|\phi_{n-1}\|_k\cdot \l^{-\frac23 (r_{n-1}^+)^{\frac23}}.\end{array}
$$ if $\l, N\gg 1$.
Then by Lemma \ref{boundforphin}, we have
$$
|\frac{d^ks_n}{dx^k}-\frac{d^ks_{n-1}}{dx^k}|\le ((3k)!\cdot k^k\cdot \l^{k^4})^{n-N+1}\cdot \prod_{t=N}^{n} q_t^{3k}\cdot\|\phi_0\|_{k}\cdot  \l^{-\frac23 (r_{n-1}^+)^{\frac23}}
$$ if $\l, N\gg 1$.

Since $r_{n-1}^+\ge q_{n-1}\ge (\sqrt{2})^{n-1}$, it follows that $n\le \log_{\sqrt{2}}{r^+_{n-1}}+1$. On the other hand, since $q_{n+1}\le M\cdot q_n$, one sees that $q_n\le M^n$. Then it follows that
$$
\prod_{t=N}^n q_t\le \prod_{t=N}^n M^t\le M^{n^2}.
$$

Combining these with $\l, n\gg 1$ and $k\le (r^{+}_{n-1})^{\frac{1}{10}}$,
one sees that
$$\begin{array}{ll}
&((3k)!\cdot k^k\cdot \l^{k^4})^{n-N+1}\cdot \prod_{t=N}^{n} q_t^{3k}\\
\\
&\le ((3k)!\cdot k^k\cdot \l^{k^4})^{\log_{\sqrt{2}}({r^+_{n-1}+1)}}\cdot  M^{6k\log_{\sqrt{2}}{r^+_{n-1}+1)}}
\\
\\
&\le \l^{\frac13 (r_{n-1}^+)^{\frac23}},
\end{array}
$$
which implies that
$$
|\frac{d^ks_n}{dx^k}-\frac{d^ks_{n-1}}{dx^k}|\le  \l^{-\frac13 (r_{n-1}^+)^{\frac23}}\cdot\|\phi_0\|_{k}.
$$
We conclude Lemma \ref{newlemma3.2}.
\end{appendix}

\end{document}